\definecolor{teal}{RGB}{0, 128, 128}
\definecolor{violet}{RGB}{138, 43, 226}
\definecolor{darkmagenta}{RGB}{139, 0, 139}
\definecolor{navyblue}{RGB}{0, 0, 128}
\definecolor{darkgray}{RGB}{64, 64, 64}
\tikzstyle{abstract}=[rectangle, draw=black, rounded corners, fill=blue!40, drop shadow,
\tikzstyle{comment}=[rectangle, draw=black, rounded corners, fill=green, drop shadow,
\tikzstyle{myarrow}=[->, >=open triangle 90, thick]
\tikzstyle{line}=[-, thick]
\theoremstyle{plain}
\newtheorem{theorem}{Theorem}[section]
\newtheorem{lemma}[theorem]{Lemma}
\theoremstyle{remark}
\newtheorem{remark}[theorem]{Remark}
\numberwithin{equation}{section}
\newcommand{\rsto}{]\!\kern-1.8pt ]}
\newcommand{\lsto}{[\!\kern-1.7pt [}
\numberwithin{equation}{section}
\renewcommand{\emph}[1]{\textit{#1}}
\newcommand{\FF}{\mathbb{F}}
\newcommand{\RR}{\mathbb{R}}
\newcommand{\QQ}{\mathbb{Q}}
\newcommand{\PP}{\mathbb{P}}
\newcommand{\NN}{\mathbb{N}}
\newcommand{\EE}{\mathbb{E}}
\newcommand{\cU}{\mathcal{U}}
\newcommand{\cF}{\mathcal{F}}
\newcommand{\cM}{\mathcal{M}}
\newcommand{\tr}{\mathop{\mathrm{Tr}}}
\newcommand{\eps}{\epsilon}
\newcommand{\de}{\mathrm{d}} 
\renewcommand{\cite}{\citet}
\DeclareMathOperator*{\argmin}{arg\,min}
\@date \else {\vskip3ex \centering\footnotesize\@date\par\vskip1ex}\fi
\else \@footnotetext{\@setdate}\fi}
\newcommand{\subjclassname@JEL}{JEL Classification}
\begin{document}

\title{A Deep solver for BSDEs with jumps}

\author{Kristoffer Andersson}
\address[Kristoffer Andersson, Alessandro Gnoatto, Athena Picarelli]{University of Verona, Department of Economics, \newline
\indent via Cantarane 24, 37129 Verona, Italy}
\email[Kristoffer Andersson]{kristofferherbert.andersson@univr.it}

\author{Alessandro Gnoatto}
%\address[Alessandro Gnoatto]{(Corresponding Author) University of Verona, Department of Economics, \newline
%\indent via Cantarane 24, 37129 Verona, Italy}
\email[Alessandro Gnoatto]{alessandro.gnoatto@univr.it}

\author{Marco Patacca}
\address[Marco Patacca]{University of Perugia, Department of Economics, \newline
\indent Via Alessandro Pascoli 20, 06123 Perugia, Italy}
\email[Marco Patacca]{marco.patacca@unipg.it}%

\author{Athena Picarelli}
%\address[Athena Picarelli]{University of Verona, Department of Economics, \newline
%\indent via Cantarane 24, 37129 Verona, Italy}
\email[Athena Picarelli]{athena.picarelli@univr.it}%

\begin{abstract}
The aim of this work is to propose an extension of the deep solver by Han, Jentzen, E (2018) to the case of forward backward stochastic differential equations (FBSDEs) with jumps. As in the aforementioned solver, starting from a discretized version of the FBSDE and parametrizing the (high dimensional) control processes by means of a family of artificial neural networks (ANNs), the FBSDE is viewed as a model-based reinforcement learning problem and the ANN parameters are fitted so as to minimize a prescribed loss function. We take into account both finite and infinite jump activity by introducing, in the latter case, an approximation with finitely many jumps of the forward process. {We successfully apply our algorithm to option pricing problems in low and high dimension and discuss the applicability in the context of counterparty credit risk.}
\end{abstract}

\keywords{BSDE with jumps, Deep BSDE Solver, Neural Networks, PIDE, pricing of derivatives, risk management}
\subjclass[2010]{93E20, 65M75, 68T07, 60H10. \textit{JEL Classification} C02, C63}

\date{\today}

\maketitle

\section{Introduction}\label{intro}
Forward backward stochastic differential equations (FBSDEs) have become a popular tool in several application domains. One reason is that, due to the Feynman-Kac theorem, they essentially represent the stochastic counterpart of partial differential equations (PDEs): such equations naturally arise when modelling phenomena in diverse fields such as physics, engineering and finance. In finance, for example, PDEs/FBSDEs emerge in the context of the problem of pricing a contingent claim, when the underlying security, i.e. the state variable, is modelled by means of a diffusion process. Similar remarks apply when we consider state variables following a jump diffusion process: in this case a further integral/non local term appears in the Cauchy problem, which is then referred to as a partial integro-differential equation (PIDE). BSDEs in this case are in turn generalized by the introduction of a jump term driven by a Poisson random measure.

When a closed-form solution to the PIDE is not available and the dimension of the vector of state variables is low, several approaches for the numerical solution are availble, the most famous being finite difference and finite element methods. For further details, we refer the reader to \cite{Cont2003}, \cite{Hilber2013}. However, the application of such standard numerical techniques becomes increasingly difficult as the dimension of the state space increases: the tightness of error bounds may be negatively affected or, more simply, the required computational time might increase significantly. Such phenomena are often referred to as the \textit{curse of dimension}.

The above mentioned problems in a high dimensional setting provide the motivation for the recent surge in interest in machine-learning based methods to solve PDEs/BSDEs, where artificial neural networks (ANNs) are employed in order to parametrize e.g. the function satisfying the PDE and/or its gradient. From a mathematical perspective, ANNs are multiple nested compositions of
relatively simple multivariate functions.
ANNs can be graphically represented by logical maps with a structure that loosely resembles the one of the
human brain, where each neuron (corresponding to the application of a simple function on a multi-dimensional vector) is linked to a multitude of
neighboring neurons grouped in sequential layers. 
The term deep neural networks refers to ANNs with several interconnected layers.
One remarkable property of ANNs is given in the `Universal Approximation Theorem', which essentially states that any continuous function in any dimension can be represented to arbitrary accuracy by means of an ANN, and has been proven in different versions,
starting from the remarkable insight of Kolmogorov's Representation Theorem in \cite{kolmo56} and the seminal works of \cite{cybenko89} and \cite{hornik91}.
In a nutshell, this result states that any continuous function in any dimension can be represented to arbitrary accuracy by means of an ANN. Several authors have shown, under different assumption and in different settings, that ANNs can overcome the curse of dimension when approximating the solutions of PDEs, see e.g.  \cite{jentzen2018proof}, \cite{reizha19}, \cite{hutzenthaler2020overcoming}.

Recently, several authors have proposed different numerical schemes for PDEs/BSDEs based on the Feynman-Kac theorem in absence of jumps. Seminal works in this sense are \cite{ehanjen17} and \cite{Han2018}, together with the convergence study in \cite{HanLon18}. In these works, the BSDE is discretized forward in time via a standard Euler scheme. The initial condition and the controls of the BSDE, at each point in time, are then parametrized by a family of ANNs. The parameters of the ANNs are optimized by minimizing the expected square distance between the known terminal condition and the terminal value of the discretized BSDE. An alternative route has been followed in \cite{hpw2021}, involving a backward recursion based on the dynamic programming principle. %At each time step, they seek to minimize the distance between the BSDE at the subsequent time step and the parametrized dynamics of the BSDE which depends on ANNs. In the first version (DBDP1) two families of neural networks are employed to approximate the value process and the control in the solution of the BSDE. The second version of the algorithm (DBDP2) exploits the fact that the control is linked to the gradient of the value process of the BSDE so that only one family of neutral networks is employed for the approximation of the value process. The control is obtained by automatic differentiation.
Another alternative is the so-called \textit{deep splitting method}, first proposed in \cite{bbcjn2021}: in this case the differential operator of a parabolic PDE is split into a linear and a non-linear part. The non-unique split is chosen is such a way that the non-linear part becomes small. The PDE is then solved iteratively over small time intervals. Such numerical solution involves the recursive computation of conditional expectations which are approximated by means of ANNs. We also mention, among others, the \textit{deep Galerkin method} proposed in \cite{SIRIGNANO2018}: this method is not based on the link between FBSDE and PDEs instead, in their approach, the ANN is trained in order to satisfy the differential operator, the initial condition, and the boundary conditions. The above mentioned papers have been generalized in a multitude of directions and we refer to \cite{bbcjn2021} for an extensive literature review.

{Concerning cases involving jumps, we mention the work by \cite{BARIGOU2022113922}, focusing on actuarial applications and the  recent preprint \cite{georgoulis2024deep}}.

The aforementioned contribution \cite{hpw2021} has been extended to the jump diffusion case in \cite{castro2021}, which also proposes a convergence analysis. {More recently, the backward dynamic programming approach has been considered also in \cite{bcn23}, where reflected BSDEs are also considered.} A generalization of \cite{bbcjn2021} is presented in \cite{vk2021} with a convergence analysis provided in \cite{frey2025convergence}. The deep Galerkin method instead is extended in \cite{al2022extensions}. { We point out that in these works only finite jump activity is taken into account.}

In this paper, we generalize the algorithm of \cite{ehanjen17} and \cite{Han2018} to the jump diffusion setting { taking into account the possible infinite jump activity}. Following their reasoning, we discretize the FBSDE with jumps in a forward loop with respect to the time dimension. Next, we introduce deep learning approximations, in the form of two families of ANNs with different tasks: the first family approximates the value process of the BSDE, i.e. the solution of the PIDE, whereas the second is introduced in order to approximate the possibly high dimensional integral with respect to the L\'evy measure that appears in the discretized BSDE. Our algorithm is first formulated for processes where the jump term exhibits finite activity. The first family of networks is trained by minimizing the expected square distance between the terminal value of the parametrized BSDE and the known terminal condition, in line with \cite{ehanjen17} and \cite{Han2018}. To train the second family of ANNs we exploit the martingale property of compensated Poisson integrals and the $L^2$-minimality of conditional expectations, which results in a second penalty term to be minimized.  We also propose an extension for the infinite activity/infinite variation of jumps case: we rely on the literature on the approximation of paths of infinite activity L\'evy processes by means of compound Poisson processes, (see e.g. \cite{ar2001} among others) and we also introduce a perturbation of the diffusion coefficient to approximate jumps smaller than a pre-specified threshold. {We provide a full error analysis of the approximation of infinite activity FBSDEs by means of finite activity FBSDEs. The full study of the convergence of the algorithm beyond the scope of the present paper and is the subject of future work.}  We point out that, during the review process of this paper,  the preprint \cite{abdw24} appeared, introducing new algorithms and presenting an insightful comparison between local and global solvers for (possibly coupled) FBSDEs with jumps.\color{black}
 
 The paper is organized as follows. In Section \ref{Sect:preliminaries}, we present our main assumptions and notations, whereas Section \ref{sec:algorithm} is devoted to the presentation of our algorithm both in the finite and the infinite activity case. Section \ref{sec:errorEstimates} presents some a priori error estimates that study the distance between the solution of the true FBSDE with jumps of infinite activity and the one of the approximating FBSDE driven by a compound Poisson process. Finally Section \ref{sec:numerics} presents some numerical experiments both in the finite and infinite activity setting with a focus on financial modeling: among others, we show that our methodology can be used to treat a PIDE in dimension $100$ related to the pricing of a basket option. {We also highlight the applicability of our methodology in the context of counterparty credit risk along the lines of \cite{gpr2020}}

\section{Setting and preliminaries}\label{Sect:preliminaries}

We fix a time horizon $T\in\RR_+$ with $T<\infty$. Let $\left(\Omega,\cF,\FF,\QQ\right)$ be a filtered probability space, where the filtration $\FF=\left(\mathcal{F}_t\right)_{t\in[0,T]}$ satisfies the usual assumptions. All semimartingales introduced in the following are c\`adl\`ag. 

We suppose that the filtered probability space supports an $\RR^d$ valued standard Brownian motion $W=(W_t)_{t\in[0,T]}$, together with a Poisson random measure $N$ with associated L\'evy measure $\nu$, such that
\begin{align}
\label{eq:assumpLevyMeasure}
\nu(\{0\}) = 0, \quad \int_{\RR^d}1 \wedge |z|^2 \nu(\de z)<\infty,\quad \text{ and }\int_{|z|\geq 1}|z|^2 \nu(\de z)<\infty,
\end{align}
allowing us to introduce the compensated random measure 
\begin{equation}\label{eq:compensated}
\tilde N(\mathrm d t,\mathrm d z) : = N(\mathrm d t,\mathrm d z) - \nu(\mathrm d z) \mathrm d t.
\end{equation}
Let us introduce the following spaces
\begin{itemize}
%\item $\mathcal{P}$, the predictable $\sigma$-field: the $\sigma$-field on $\Omega \times[0, T]$ generated by all left continuous and $\mathbb{F}$-adapted processes;
\item $\mathbb{L}_{\mathcal F}^{2}\left(\mathbb{R}^{d}\right)$, the space of all $\mathcal{F}$-measurable random variables $X: \Omega \rightarrow \mathbb{R}^{d}$ satisfying $$\|X\|^{2}:=\mathbb{E}\left(|X|^{2}\right)<+\infty;$$
\item $\mathbb{H}_{T}^{2}\left(\mathbb{R}^{d}\right)$, the space of all predictable process $\phi: \Omega \times[0, T] \rightarrow \mathbb{R}^{d}$ such that $$\|\phi\|^{2}_{\mathbb{H}^{2}}:=\mathbb{E}\left[\int_{0}^{T}\left|\phi_{t}\right|^{2} \mathrm{~d} t\right]<+\infty.$$
\end{itemize}
As previously mentioned, the filtration $\FF$ supports the Brownian motion $W$ and the Poisson random measure $N$.

%{\color{red} I suggest to remove:
%
%
%The following is the statement of the predictable representation property in the present setting
%
%\begin{lemma}[See Lemma III, 4.24 in \cite{jashi03}]\label{lem:predRepr} Any $\mathbb{F}$ local martingale $M$ has the representation
%$$
%M_{t}=M_{0} + \int_{0}^{t} Z_{s} \mathrm{~d} W_{s}+\int_{0}^{t} \int_{\mathbb{R}^{d}} U_{s}(z) \tilde{N}(\de s, \de z), \quad 0 \leq t \leq T
%$$
%where $Z$ and $U$ are $\FF$-predictable processes both integrable with respect to $W$ and $\tilde{N}$.
%\end{lemma}
%}
Next, we introduce the following spaces that are needed to define the concept of solution to FBSDEs.

\begin{itemize}
\item $\mathbb{H}_{T, N}^{2}\left(\mathbb{R}^{d}\right)$ is the space of all predictable process $\phi: \Omega \times[0, T] \times \mathbb{R}^{d} \rightarrow \mathbb{R}^{d}$ satisfying $$\|\phi\|^2_{\mathbb{H}_{N}^{2}}:=\mathbb{E}\left[\int_{0}^{T} \int_{\mathbb{R}^{d}}\left|\phi_{t}(z)\right|^{2} \nu(\de z) \de t\right]<+\infty,$$ where we integrate with respect to the predictable compensator of the Poisson random measure $N$.
\item $\mathbb{S}_{T}^{2}\left(\mathbb{R}^{d}\right)$ is the space of $\mathbb{F}$-adapted càdlàg processes $\phi: \Omega \times[0, T] \rightarrow \mathbb{R}^{d}$ satisfying $$\|\phi\|_{\mathbb S^2} :=\mathbb{E}\left[\sup _{0\leq t \leq  T}\left|\phi_{t}\right|^{2}\right]<+\infty.$$
\end{itemize}

Let us consider a SDE in the following general form:
\begin{align}
X _ { s } &  = x + \int _ { t } ^ { s } b \left( X _ { r- } \right) \mathrm d r + \int _ { t } ^ { s } \sigma \left( X _ { r- } \right)^\top \mathrm d W _ { r }  +  
\int^s_t \int_{\RR^d} \Gamma \left( X _ { r- } , z\right) \tilde N(\mathrm d r, \mathrm d z),\, \; t\in [0,T],\, x \in \mathbb { R } ^ { d }.
 \label{eq:forward}
\end{align}

The vector fields $b\colon \mathbb{R}^d \to \mathbb{R}^d$, $\sigma\colon\mathbb{R}^d\to \mathbb{R}^{d\times d}$, $\Gamma\colon \RR^d \times \RR^d \to \RR^d$ are measurable functions satisfying the following assumptions.
\begin{itemize}
\item[(A1)] The functions $b$ and $\sigma$ are Lipschitz continuous;
\item[(A2)] The function $\Gamma$ satisfies, for some constant $K>0$
\begin{align*}
\begin{aligned}
&\int_{|z|<1}\left|\Gamma\left(x, z\right)-\Gamma\left(x^\prime, z\right)\right|^2 \nu(\de z) \leq K\left|x-x^\prime\right|^2\\
&\int_{|z|<1}|\Gamma(x, z)|^2 \nu(\de z) \leq K\left(1+|x|^2\right)
\end{aligned}
%\left|\Gamma(x, z)\right| & \leq K(1 \wedge|z|),\quad (x, z) \in \mathbb{R}^{d} \times \mathbb{R}^{d} \\
%\left|\Gamma(x, z)-\Gamma\left(x^{\prime}, z\right)\right| &\leq K\left|x-x^{\prime}\right|(1 \wedge|z|),\quad(x, z),\left(x^{\prime}, z\right) \in \mathbb{R}^{d} \times \mathbb{R}^{d} . \end{aligned}
\end{align*}
\end{itemize}

Under such assumptions the SDE \eqref{eq:forward} admits a unique strong solution with initial time $t$ and initial condition $x\in \RR^d$ that we denote by $X^{t,x}=(X^{t,x}_s)_{t\in[t,T]}$. In our setting the $\sigma$-algebra $\mathcal F_t$ is then the trivial one.

\begin{theorem} Let assumptions (A1)-(A2) be satisfied. The following holds:
\begin{enumerate}
\item for each $(t, x) \in[0, T] \times \mathbb{R}^{d}$ exists a unique adapted, c\`adl\`ag solution $X^{t, x}:=$ $(X_{s}^{t, x})_{t \leq s \leq T}$ to \eqref{eq:forward};
\item the solution $X^{t, x}$ is a homogeneous Markov process.
\end{enumerate}
\end{theorem}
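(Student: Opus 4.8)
The plan is to treat the two assertions separately: (i) by a Banach fixed-point (Picard) argument in a space of square-integrable càdlàg processes, and (ii) by combining uniqueness with the stationarity and independence of the increments of the driving noise. For (i), fix $(t,x)\in[0,T]\times\RR^d$ and work on the Banach space $\mathbb{S}^2_{[t,T]}(\RR^d)$ of $\FF$-adapted càdlàg processes $Y$ with $\norm{Y}^2_{\mathbb{S}^2}=\EE[\sup_{t\le s\le T}|Y_s|^2]<\infty$. I would introduce the map $\Phi$ defined by
\[
\Phi(Y)_s = x + \int_t^s b(Y_{r-})\,\de r + \int_t^s \sigma(Y_{r-})^\top\,\de W_r + \int_t^s\int_{\RR^d}\Gamma(Y_{r-},z)\,\tilde N(\de r,\de z),
\]
so that a fixed point of $\Phi$ is exactly a strong solution of \eqref{eq:forward}.

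First I would check that $\Phi$ maps $\mathbb{S}^2_{[t,T]}$ into itself. Here assumptions (A1)--(A2) and the integrability of $\nu$ in \eqref{eq:assumpLevyMeasure} enter: the linear-growth bounds implied by the Lipschitz continuity of $b,\sigma$ and by the second inequality in (A2) give square-integrability of the three integrals through the Itô isometries
\[
\EE\Big[\Big|\int_t^s\sigma(Y_{r-})^\top\,\de W_r\Big|^2\Big]=\EE\Big[\int_t^s|\sigma(Y_{r-})|^2\,\de r\Big]
\]
and
\[
\EE\Big[\Big|\int_t^s\!\!\int_{\RR^d}\!\Gamma(Y_{r-},z)\,\tilde N(\de r,\de z)\Big|^2\Big]=\EE\Big[\int_t^s\!\!\int_{\RR^d}\!|\Gamma(Y_{r-},z)|^2\,\nu(\de z)\,\de r\Big].
\]
Then, to obtain the contraction, I would take the difference $\Phi(Y)-\Phi(Y')$, apply the Burkholder--Davis--Gundy inequality to the two stochastic integrals, and use the Lipschitz estimates from (A1) and the first inequality in (A2) to bound $\EE[\sup_{t\le r\le s}|\Phi(Y)_r-\Phi(Y')_r|^2]$ by a constant multiple of $\EE[\int_t^s|Y_{r-}-Y'_{r-}|^2\,\de r]$. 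Either restricting to a short interval $[t,t+\delta]$ and concatenating, or passing to the equivalent weighted norm $\EE[\sup_s e^{-\beta s}|Y_s|^2]$ with $\beta$ large, makes $\Phi$ a contraction, and Banach's fixed-point theorem then yields the unique adapted càdlàg solution $X^{t,x}$, proving (i).

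For (ii), I would first upgrade uniqueness to the flow property $X^{t,x}_s=X^{u,X^{t,x}_u}_s$ for $t\le u\le s$, obtained by regarding the restriction of $X^{t,x}$ to $[u,T]$ as the solution started at time $u$ from the $\cF_u$-measurable value $X^{t,x}_u$ and invoking uniqueness. The key structural observation is that, since $b,\sigma,\Gamma$ carry no explicit time dependence and $W,N$ have stationary independent increments, $X^{u,y}_s$ is a measurable functional of $y$ and of the increments of $(W,N)$ on $[u,s]$ alone, hence independent of $\cF_u$. Conditioning on $\cF_u$ and using a freezing/substitution argument then gives $\Econd{f(X^{t,x}_s)}{\cF_u}=g(X^{t,x}_u)$ with $g(y)=\EE[f(X^{u,y}_s)]$, which is the Markov property; homogeneity, $\EE[f(X^{u,y}_s)]=\EE[f(X^{0,y}_{s-u})]$, follows from stationarity of the increments together with the time-independence of the coefficients.

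The main obstacle is not the contraction estimate, which is routine, but the careful treatment of the jump term: making the freezing argument in (ii) rigorous requires controlling the measurable dependence of the compensated Poisson integral on both the initial datum and the driving noise, and in (i) one must verify that the hypotheses in (A2), which are stated only over $|z|<1$, together with the square-integrability $\int_{|z|\ge1}|z|^2\,\nu(\de z)<\infty$ of the large jumps, genuinely control the large-jump contribution of $\Gamma$ so that the isometry and BDG bounds close.
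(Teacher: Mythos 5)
Your outline is correct and is, in substance, the standard proof; the paper itself does not write it out but simply invokes Theorems 6.2.9 and 6.4.6 of Applebaum's book, whose proofs are exactly the Picard/contraction argument and the flow-plus-independent-increments argument you describe. The one genuine point of divergence is in how the large jumps are handled, and you have correctly put your finger on it in your closing paragraph. Applebaum's existence theorem is stated for an SDE in which only the small jumps ($|z|<1$) enter through the compensated measure $\tilde N$, while the large jumps enter through the uncompensated measure $N$ and are treated by interlacing: since \eqref{eq:assumpLevyMeasure} forces $\nu(\{|z|\geq 1\})\leq\int_{|z|\geq1}|z|^2\,\nu(\de z)<\infty$, there are a.s.\ finitely many large jumps on $[t,T]$ and one solves the small-jump equation between consecutive large-jump times, so that no integrability of $\Gamma(x,\cdot)$ against $\nu$ on $\{|z|\geq1\}$ is ever needed. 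Your proposal instead runs the fixed-point argument directly on \eqref{eq:forward} with $\tilde N$ integrated over all of $\RR^d$, and for the It\^o isometry and BDG bounds to close you then need $\int_{\RR^d}|\Gamma(x,z)|^2\,\nu(\de z)\leq K(1+|x|^2)$ and the corresponding Lipschitz bound over the whole space, whereas (A2) as stated only controls $\{|z|<1\}$; the extra condition $\int_{|z|\geq1}|z|^2\,\nu(\de z)<\infty$ bounds the measure but says nothing about $\Gamma$ there. So either you must strengthen (A2) to hold on all of $\RR^d$ (which is what the paper implicitly does elsewhere, e.g.\ via (A6) with $\gamma$ Lipschitz), or you should adopt the interlacing decomposition before launching the contraction. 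With that single adjustment your argument for (i) is complete, and your treatment of (ii) --- flow property from uniqueness, independence of $X^{u,y}_s$ from $\cF_u$ via measurable dependence on the increments of $(W,N)$ on $[u,s]$, and homogeneity from stationarity of increments and time-independence of the coefficients --- matches the cited proof.
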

\begin{proof}
The claims follow from Theorem 6.2.9. and Theorem 6.4.6 in \cite{appl09} respectively.
\end{proof}

{
\begin{remark}
We assume the coefficients in \eqref{eq:forward} are time independent for simplicity. All the results of the paper may be extended to the time dependent case.
 \end{remark}
}
Let us then introduce the following BSDE:
\begin{equation}\label{eq:backward}
\begin{aligned}
 Y_{t}    = &  g (X_T) +\int_{t}^{T} f \left(s, X_{r-}, Y_{r-}, Z_{r}, \int_{\RR^d} U_r(z) \nu(\mathrm d z) \right) \mathrm d r- \int_{t}^{T} Z_{r}^\top \mathrm d W_{r} \\
& - \int^T_t \int_{\RR^d} U_r(z) \tilde N(\mathrm d r, \mathrm d z). 
\end{aligned}
\end{equation}

\begin{remark}
A more general formulation of the BSDE \eqref{eq:backward} may involve a driver of the form $f:[0, T] \times \mathbb{R}^d \times \mathbb{R} \times \mathbb{R}^d \times L^2_{\nu}(\RR^d) \rightarrow \mathbb{R} $, where $L^2_{\nu}(\RR^d)$ is the space of functions $\phi$ such that $\int_{\mathbb{R}^d}|\phi(z)|^2\nu(\de z)<\infty$,  i.e. {the driver in \eqref{eq:backward} may be replaced by a generic functional} $f \left(s, X_{r-}, Y_{r-}, Z_{r}, U_r(\cdot) \right)$. We introduce instead a dependence on the integral of $U$ with respect to the L\'evy measure, which is often found in applications, see \cite{bbp1997} and \cite{Delongbook}.
\end{remark}

We consider the following assumptions:
\begin{enumerate}
\item[(A3)] the function $f:[0, T] \times \mathbb{R}^{d} \times \mathbb{R} \times \mathbb{R}^{d} \times \mathbb{R}^{d} \rightarrow \mathbb{R}$ is Lipschitz continuous with respect to the state variables, uniformly in $t$, i.e. there exists $K>0$ such that 
$$
\begin{aligned}
&\left|f(t, x, y, z, u)-f\left(t, x^{\prime}, y^{\prime}, z^{\prime}, u^{\prime}\right)\right| \\
&\quad \leq K\left(\left|x-x^{\prime}\right|+\left|y-y^{\prime}\right|+\left|z-z^{\prime}\right|+\left|u-u^{\prime}\right|\right)
\end{aligned}
$$
for all $(t, x, y, z, u),\left(t, x^{\prime}, y^{\prime}, z^{\prime}, u^{\prime}\right) \in[0, T] \times \mathbb{R}^{d} \times \mathbb{R} \times \mathbb{R}^{d} \times \mathbb{R}^{d}$;
\item[(A4)]  the function $g: \mathbb{R}^{d} \rightarrow \mathbb{R}$ is Lipschitz continuous, i.e. there exists $K>0$ such that 
$$
\left|g(x)-g\left(x^{\prime}\right)\right| \leq K\left|x-x^{\prime}\right|,
$$
for all $x, x^{\prime} \in \mathbb{R}^{d}$.
\end{enumerate}

The above conditions allow us to recall the following existence and uniqueness theorem. We use again superscripts $t,x$ to stress the dependence on the initial condition $x$ at time $t$.

\begin{theorem}\label{th:existUniq} [Theorem 4.1.3, \cite{Delongbook}] Under assumptions (A1)-(A4) there exists a unique solution $(X^{t, x},Y^{t, x},Z^{t, x},U^{t, x}) \in \mathbb{S}_{T}^{2}\left(\mathbb{R}^{d}\right) \times$ $\mathbb{S}_{T}^{2}(\mathbb{R}) \times \mathbb{H}_{T}^{2}\left(\mathbb{R}^{d}\right) \times \mathbb{H}_{T, N}^{2}\left({\mathbb{R}}\right)$ to the FBSDE \eqref{eq:forward}-\eqref{eq:backward}.
\end{theorem}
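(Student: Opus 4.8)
Since the preceding theorem already provides, under (A1)--(A2), a unique càdlàg adapted solution $X^{t,x}$ of the forward equation \eqref{eq:forward}, the plan is to regard $X^{t,x}$ as given and to reduce the statement to the well-posedness of the backward component \eqref{eq:backward}. First I would record that $X^{t,x}\in\mathbb{S}_T^2(\RR^d)$: the Lipschitz assumption (A1) and the growth bound in (A2) yield, via the Burkholder--Davis--Gundy inequality applied to the Brownian and compensated-jump integrals followed by Gronwall's lemma, the estimate $\EE[\sup_{t\le s\le T}|X_s^{t,x}|^2]\le C(1+|x|^2)$. This in particular makes $g(X_T^{t,x})\in\mathbb{L}^2_{\cF}(\RR)$ by (A4). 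The construction of $(Y,Z,U)$ is then carried out by a Picard--Banach fixed-point argument in the Banach space $\cB:=\mathbb{S}_T^2(\RR)\times\mathbb{H}_T^2(\RR^d)\times\mathbb{H}_{T,N}^2(\RR)$, which I would equip with the weighted norm obtained by inserting a factor $e^{\beta s}$ in each of the three integrals, the parameter $\beta>0$ to be fixed later.

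To define the relevant map, fix $(y,z,u)\in\cB$ and set the predictable driver term $F_s:=f\big(s,X_{s-}^{t,x},y_{s-},z_s,\int_{\RR^d}u_s(\zeta)\,\nu(\de\zeta)\big)$. The square-integrable martingale
\[
M_s := \Econd{g(X_T^{t,x}) + \int_0^T F_r\,\de r}{\cF_s}
\]
admits, by the predictable representation property of Lemma \ref{lem:predRepr}, a unique pair $(Z,U)\in\mathbb{H}_T^2(\RR^d)\times\mathbb{H}_{T,N}^2(\RR)$ with $M_s=M_0+\int_0^s Z_r\,\de W_r+\int_0^s\int_{\RR^d}U_r(\zeta)\,\tilde N(\de r,\de\zeta)$. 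Setting $Y_s:=M_s-\int_0^s F_r\,\de r=\Econd{g(X_T^{t,x})+\int_s^T F_r\,\de r}{\cF_s}$ produces a triple solving the BSDE with the driver frozen at $F$, and hence a map $\Phi(y,z,u)=(Y,Z,U)$ whose fixed points are exactly the solutions of \eqref{eq:backward}.

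For the contraction, I would take two inputs, write $\Delta Y=Y^1-Y^2$ and similarly for the other components, apply the Itô formula for jump processes to $e^{\beta s}|\Delta Y_s|^2$ between $s$ and $T$, and take expectations; the martingale contributions vanish, $\Delta Y_T=0$, and the jump part of the formula supplies the term $\EE\int_s^T e^{\beta r}\|\Delta U_r\|_{L^2_\nu}^2\,\de r$. Bounding the driver difference with the Lipschitz constant from (A3) and a Young inequality then yields a bound of the schematic form
\begin{align*}
& \EE\!\left[\int_0^T e^{\beta s}\big(\beta|\Delta Y_s|^2+|\Delta Z_s|^2+\|\Delta U_s\|_{L^2_\nu}^2\big)\,\de s\right] \\
& \qquad \le C\,\EE\!\left[\int_0^T e^{\beta s}\big(|\Delta y_s|^2+|\Delta z_s|^2+\|\Delta u_s\|_{L^2_\nu}^2\big)\,\de s\right],
\end{align*}
with $C$ depending only on $K$. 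Choosing $\beta$ large enough renders the induced self-map strictly contractive, and Banach's fixed-point theorem delivers a unique fixed point; combined with $X^{t,x}$ this gives the asserted unique quadruple. As an alternative I would note that one may instead prove contractivity on a short interval $[T-\delta,T]$ and patch the solutions, avoiding the weight altogether.

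The step I expect to demand the most care is the treatment of the nonlocal argument $\int_{\RR^d}u_s(\zeta)\,\nu(\de\zeta)$ of the driver. For the map $\Phi$ to be well defined on $\cB$ one needs this integral to be finite and controlled by $\|u_s\|_{L^2_\nu}$, which via Cauchy--Schwarz holds immediately only when $\nu(\RR^d)<\infty$; in the infinite-activity regime permitted by \eqref{eq:assumpLevyMeasure} this requires pairing the Lipschitz dependence of $f$ in its last argument with the correct normalization of the $L^2_\nu$ norm (or restricting, as is implicit here, to a driver whose $u$-dependence is through a $\nu$-integrable functional), so that the constant $C$ above stays finite uniformly. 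Verifying this compatibility, and thereby that $\Phi$ genuinely maps $\cB$ into itself, is the crux of the argument.
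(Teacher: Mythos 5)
The paper offers no proof of this statement: it is imported verbatim as Theorem 4.1.3 of \cite{Delongbook}, so there is nothing internal to compare against. Your argument is, in outline, exactly the standard proof underlying that citation: a moment bound for the forward component via Burkholder--Davis--Gundy and Gronwall, then a Picard--Banach fixed point for $(Y,Z,U)$ built from the predictable representation property of Lemma \ref{lem:predRepr}, with the contraction obtained from It\^o's formula applied to $e^{\beta s}|\Delta Y_s|^2$ and a large weight $\beta$ (or, equivalently, the short-interval patching you mention). All of these steps are correct and complete enough to reconstruct the full proof. The one point worth emphasizing is the caveat you yourself flag: with the driver depending on $U$ only through $\int_{\RR^d}U_r(z)\,\nu(\de z)$, the Lipschitz condition (A3) transfers to a Lipschitz estimate in the $\mathbb{H}^2_{T,N}$ norm only if $\bigl|\int_{\RR^d}(u-u')(z)\,\nu(\de z)\bigr|\le C\|u-u'\|_{L^2_\nu}$, which by Cauchy--Schwarz requires $\nu(\RR^d)<\infty$ (or an additional weight inside the integral). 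The paper's standing assumption \eqref{eq:assumpLevyMeasure} does not force $\nu(\RR^d)<\infty$, so strictly speaking the theorem as stated needs either the finite-activity hypothesis later imposed in Section \ref{sec:finite}, or the more general functional driver mentioned in the paper's remark, for the contraction constant to be finite. Identifying this as the crux is accurate, and making that hypothesis explicit would be the only substantive addition needed to turn your sketch into a complete proof.
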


To alleviate notations, hereafter we omit when possible the dependency on the initial condition $(t,x)$ of the processes $(X^{t,x}_\cdot,Y^{t,x}_\cdot, Z^{t,x}_\cdot,U^{t,x}_\cdot)$.

Our objective is to numerically solve FBSDEs with jumps, possibly in a high dimensional setting, by means of deep learning techniques. It is well known that FBSDEs with jumps are strongly related to PIDEs, so our solver offers in turn a numerical approach to the solution of high dimentional PIDEs. The main theoretical tool that underpins this link is the Feynman-Kac theorem, that links PIDEs and FBSDEs in a Markovian setting. For the reader's convenience, let us recall the version of the Feynman-Kac theorem that is relevant for our purposes. We are interested in finding a function $u\in C^{1,2}([0,T]\times \mathbb{R}^d,\mathbb{R})$ that satisfies the PIDE
\begin{align}
\label{eq:PIDE1}
\begin{aligned}
&-u_{t}(t, x)-\mathscr{L} u(t, x) \\
&\quad-f\left(t, x, u(t, x), D_x u(t, x) \sigma(x), \mathscr{J} u(t, x)\right)=0, \quad(t, x) \in[0, T) \times \mathbb{R}^d, \\
&u(T, x)=g(x), \quad x \in \mathbb{R}^d,
\end{aligned}
\end{align}
where
\begin{align}
\label{eq:PIDE2}
\begin{aligned}
\mathscr{L} u(t, x)=& \left\langle \mu(x), D_x u(t, x)\right\rangle+\frac{1}{2} \left\langle\sigma(x) D^2_{x} u(t, x), \sigma(x)\right\rangle \\
&+\int_{\mathbb{R}^d}\left(u(t, x+\Gamma(x, z))-u(t, x)-\left\langle\Gamma(x, z), D_x u(t, x)\right\rangle\right) \nu(d z), \\
\mathscr{J} u(t, x)=& \int_{\mathbb{R}^d}(u(t, x+\Gamma(x, z))-u(t, x))\nu(d z).
\end{aligned}
\end{align}
The link between the FBSDE \eqref{eq:forward}-\eqref{eq:backward} and the PIDE \eqref{eq:PIDE1}-\eqref{eq:PIDE2} is established via the following :

\begin{theorem}[Theorem 4.2.1, \cite{Delongbook}] \label{th:FeynmanKac}
Let assumptions (A1)-(A4) be satisfied and  let  $u\in C^{1,2}([0,T]\times \mathbb{R}^d,\mathbb{R})$ satisfy the PIDE \eqref{eq:PIDE1}-\eqref{eq:PIDE2} and the linear growth conditions
\begin{align}
|u(t, x)| \leq K(1+|x|), \quad\left|u_{x}(t, x)\right| \leq K(1+|x|), \quad\forall\, (t, x) \in[0, T] \times \mathbb{R}^d,
\end{align}
then 
\begin{align}
\begin{aligned}
Y^{t, x}_s=& u(s, {X}^{t, x}_s), \quad t \leq s \leq T, \\
Z^{t, x}_s=& \sigma({X}^{t, x}_{s-})^\top D_x u(s, {X}^{t, x}_{s-}), \quad t \leq s \leq T, \\
U^{t, x}_s(z)=& u(s, {X}^{t, x}_{s-}+\Gamma({X}^{t, x}_{s-}, z)) -u(s, {X}^{t, x}_{s-}), \quad t \leq s \leq T, z \in \mathbb{R}^d. 
\end{aligned}
\end{align}
\end{theorem}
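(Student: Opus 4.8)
The plan is to apply the Itô formula for jump diffusions to the map $s \mapsto u(s, X_s^{t,x})$ and to read off the dynamics of $Y_s := u(s, X_s^{t,x})$ from the PIDE. Fixing $(t,x)$ and writing $X = X^{t,x}$, since $u \in C^{1,2}$ I would expand, for $t \le s \le T$,
\begin{equation*}
\begin{aligned}
u(s, X_s) = {}& u(t,x) + \int_t^s\bigl(u_r(r,X_{r-}) + \mathscr{L}u(r,X_{r-})\bigr)\,\de r + \int_t^s Z_r^\top\,\de W_r \\
&{}+ \int_t^s\int_{\RR^d} U_r(z)\,\tilde N(\de r,\de z),
\end{aligned}
\end{equation*}
where $Z_r := \sigma(X_{r-})^\top D_x u(r, X_{r-})$ and $U_r(z) := u(r, X_{r-} + \Gamma(X_{r-},z)) - u(r, X_{r-})$ are precisely the candidate processes of the statement. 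The point is that the finite-variation ($\de r$) part of the Itô expansion collects exactly $u_r + \mathscr{L}u$: the first-order term $\langle b(X_{r-}), D_x u\rangle$ (which matches the $\langle \mu, D_x u\rangle$ term of \eqref{eq:PIDE2} upon identifying $\mu = b$), the second-order diffusion term $\tfrac12\langle \sigma D_x^2 u, \sigma\rangle$, and the integral $\int_{\RR^d}\bigl(u(r, X_{r-}+\Gamma) - u(r,X_{r-}) - \langle\Gamma, D_x u\rangle\bigr)\,\nu(\de z)$ that is produced when compensating the jump integral (passing from $N$ to $\tilde N$).

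Next I would substitute the PIDE. Noting that $\int_{\RR^d} U_r(z)\,\nu(\de z) = \mathscr{J}u(r, X_{r-})$, the driver $f(r, X_{r-}, u(r,X_{r-}), \sigma^\top D_x u, \mathscr{J}u)$ from \eqref{eq:PIDE1} is exactly $f$ evaluated at $(r, X_{r-}, Y_{r-}, Z_r, \int_{\RR^d}U_r(z)\,\nu(\de z))$ (the left-limit versus value distinction being immaterial under the $\de r$-integral). Hence the PIDE, rewritten as $u_r + \mathscr{L}u = -f(r,\cdot,u,\sigma^\top D_x u,\mathscr{J}u)$, turns the drift integral in the display into $-\int_t^s f(r, X_{r-}, Y_{r-}, Z_r, \int_{\RR^d}U_r(z)\,\nu(\de z))\,\de r$. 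Applying the expansion on $[s,T]$ instead of $[t,s]$, using the terminal condition $u(T, X_T) = g(X_T)$, and rearranging then reproduces \eqref{eq:backward} verbatim, with $(Y, Z, U)$ as claimed.

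Finally I would check that $(Y, Z, U)$ belong to the spaces of Theorem \ref{th:existUniq}, so that the two stochastic integrals above are genuine (square-integrable) martingales and uniqueness identifies the constructed triple with the FBSDE solution. Here the linear growth bounds $|u(t,x)|\le K(1+|x|)$ and $|u_x(t,x)|\le K(1+|x|)$, combined with the standard moment estimate $\EE[\sup_{t\le s \le T}|X_s|^2] < \infty$ for \eqref{eq:forward}, give $Y \in \mathbb{S}_T^2$ and $Z \in \mathbb{H}_T^2$ directly. I expect the main obstacle to be the membership $U \in \mathbb{H}_{T,N}^2$: controlling $U_r(z)$ near $z = 0$, where one bounds the increment $u(r, X_{r-}+\Gamma) - u(r, X_{r-})$ by a mean-value argument using the gradient growth of $u$ and the local estimate $\int_{|z|<1}|\Gamma(x,z)|^2\,\nu(\de z) \le K(1+|x|^2)$ from (A2), while exploiting the $1\wedge|z|^2$ integrability of $\nu$ in \eqref{eq:assumpLevyMeasure}; the same estimates guarantee that the compensator correction in the Itô formula is finite. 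Once this integrability bookkeeping is in place, the representation follows immediately from the Itô expansion; the required estimates are classical and carried out, e.g., in \cite{Delongbook}.
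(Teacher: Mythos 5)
Your argument is correct and is essentially the standard proof of this result: the paper itself does not prove the theorem but cites it from \cite{Delongbook} (Theorem 4.2.1), where the argument is exactly the It\^o-formula expansion of $u(s,X^{t,x}_s)$, substitution of the PIDE into the drift, and an integrability check identifying the constructed triple with the unique solution of Theorem \ref{th:existUniq}. Your handling of the compensation (merging the $-\langle D_xu,\int\Gamma\,\nu\rangle$ term from the forward drift with the jump sum to produce the nonlocal part of $\mathscr{L}$) and the final verification that $U\in\mathbb{H}^2_{T,N}$ via the gradient growth bound and (A2) are precisely the points the cited proof addresses.
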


The above formulation of FBSDEs is intrinsically linked to the following stochastic optimal control problem:
\begin{align}
\label{eq:stochControlProblem}
&\underset{\begin{array}{c}y\in\RR, \\Z\in \mathbb{H}_{T}^{2}\left(\mathbb{R}^{d}\right),\\U\in \mathbb{H}_{T,N}^{2}\left(\mathbb{R}\right)\end{array}}{\text{minimize}}\mathbb{E}\left[\left(g\left(X^{t, x}_{T}\right)-Y^{y,Z,U}_{T}\right)^{2}\right], \nonumber\\
& \text{subject to: }\\
&\begin{cases}
X ^{t, x}_ { s} =  x + \int _ { t } ^ { s } b ( X^{t, x} _ { r- }) \mathrm d r + \int _ { t } ^ { s } \sigma ( X ^{t, x}_ { r- })^\top \mathrm d W _ { r }  +  
\int^s_t \int_{\RR^d} \Gamma ( X^{t, x} _ { r- } , z	) \tilde N(\mathrm d r, \mathrm d z), \\
Y^{y,Z,U}_{s}  =  y    -\int_{t}^{s} f (r, X^{t,x}_{r-}, Y^{y,Z,U}_{r-}, Z_{r}, \int_{\RR^d} U_r(z) \nu(\mathrm d z) ) \mathrm d r+ \int_{t}^{s} (Z_{r})^\top \mathrm d W_{r} \nonumber \\
\quad  \quad\qquad + \int^s_t \int_{\RR^d} U_r(z) \tilde N(\mathrm d r, \mathrm d z), \hfill s\in[t,T]. 
\end{cases}
\end{align}
\begin{lemma}
Under assumptions (A1)-(A4), the minimum in problem \eqref{eq:stochControlProblem} is achieved and the corresponding minimizer $(y^\star,Z^\star,U^\star)$ is the unique solution to the FBSDE \eqref{eq:forward}-\eqref{eq:backward}. In particular, one has
$$
\mathbb{E}\left[\left(g\left(X^{t, x}_{T}\right)-Y^{y^\star,Z^\star,U^\star}_{T}\right)^{2}\right]=0
$$
and 
\begin{align*}
\begin{aligned}
y^\star &=Y^{t, x}_t= u\left(t, x\right), \\
Z^\star_s &=Z^{t, x}_s= \sigma({X}^{t, x}_{s-})^\top D_x u(s, {X}^{t, x}_{s-}), \quad t \leq s \leq T, \\
U^\star_s(z)& =U^{t, x}_s(z)= u(s, {X}^{t, x}_{s-}+\Gamma({X}^{t, x}_{s-}, z)) -u(s, {X}^{t, x}_{s-}), \quad t \leq s \leq T, z \in \mathbb{R}^d.
\end{aligned}
\end{align*}
\end{lemma}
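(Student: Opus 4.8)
The plan is to exploit the exact one-to-one correspondence between zeros of the quadratic objective in \eqref{eq:stochControlProblem} and solutions of the BSDE \eqref{eq:backward}. First I would note that the functional $(y,Z,U)\mapsto \mathbb{E}\bigl[(g(X^{t,x}_T)-Y^{y,Z,U}_T)^2\bigr]$ is manifestly nonnegative, so it suffices to exhibit one admissible triple on which it vanishes and to show that such a triple is unique: any zero of a nonnegative functional is automatically a global minimizer, so no separate compactness or coercivity argument for ``attainment'' is needed.

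For the existence of a minimizer I would take the unique solution $(X,Y,Z,U)$ of the FBSDE \eqref{eq:forward}-\eqref{eq:backward} supplied by Theorem \ref{th:existUniq} and set $y^\star=Y_t$, $Z^\star=Z$, $U^\star=U$. The key manipulation is to rewrite the backward equation in forward form: subtracting the integral representation of $Y_s$ from that of $Y_t$ shows that $Y$ satisfies precisely the controlled forward dynamics of \eqref{eq:stochControlProblem} with initial datum $Y_t$ and controls $(Z,U)$. Since $f$ is Lipschitz by (A3) and $(Z,U)\in\mathbb{H}^2_T\times\mathbb{H}^2_{T,N}$, this auxiliary forward SDE has a unique $\mathbb{S}^2$-solution, and hence $Y^{y^\star,Z^\star,U^\star}$ and $Y$ are indistinguishable. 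In particular $Y^{y^\star,Z^\star,U^\star}_T=Y_T=g(X^{t,x}_T)$ almost surely, so the objective equals zero and the minimum is attained.

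For uniqueness I would run the computation in the reverse direction. Let $(y',Z',U')$ be any minimizer; vanishing of the objective gives $Y^{y',Z',U'}_T=g(X^{t,x}_T)$ almost surely. Writing the forward dynamics of $Y^{y',Z',U'}$ over $[s,T]$ and inserting this terminal identity reproduces exactly the backward equation \eqref{eq:backward} for the triple $(Y^{y',Z',U'},Z',U')$. The uniqueness part of Theorem \ref{th:existUniq} then forces $Z'=Z$ in $\mathbb{H}^2_T$, $U'=U$ in $\mathbb{H}^2_{T,N}$ and $Y^{y',Z',U'}=Y$, whence $y'=Y^{y',Z',U'}_t=Y_t=y^\star$. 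This simultaneously yields attainment, uniqueness, and the identification of the minimizer with the FBSDE solution; the explicit expressions for $y^\star$, $Z^\star_s$ and $U^\star_s$ in terms of $u$ are then read off directly from the Feynman-Kac identities in Theorem \ref{th:FeynmanKac}.

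I expect the only genuinely delicate point to be the well-posedness and the forward/backward bookkeeping of the auxiliary equation for $Y^{y,Z,U}$: one must confirm that for every admissible $(y,Z,U)$ the controlled SDE admits a unique $\mathbb{S}^2$-solution, and that the passage between its forward and backward integral forms is legitimate term by term, with particular care for the compensated Poisson integral $\int\!\int U_r(z)\,\tilde N(\de r,\de z)$. Everything else reduces to the nonnegativity of the objective and to the existence and uniqueness already recorded in Theorem \ref{th:existUniq}.
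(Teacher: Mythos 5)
Your proposal is correct and follows essentially the same route as the paper: exhibit the unique FBSDE solution from Theorem \ref{th:existUniq} as an admissible control on which the nonnegative objective vanishes, argue that any other minimizer must also make the objective vanish and hence solve the FBSDE, invoke uniqueness, and read off the explicit formulas from Theorem \ref{th:FeynmanKac}. The extra care you devote to the forward/backward bookkeeping of $Y^{y,Z,U}$ and to the well-posedness of the controlled dynamics only fills in details the paper leaves implicit.
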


\begin{proof} Thanks to assumptions (A1)-(A4) the FBSDE \eqref{eq:forward}-\eqref{eq:backward} has a unique solution $({X}^{t, x},{Y}^{t, x},{Z}^{t, x},{U}^{t, x})$ as stated in Theorem \ref{th:existUniq}. We consider now $({Y}^{t, x}_t,{Z}^{t, x},{U}^{t, x})$ as the control of the minimization problem \eqref{eq:stochControlProblem}, observing that the dynamics constraint is satisfied. Since we have ${Y}^{t, x}_T=g({X}^{t, x}_T)$ $\PP$-a.s. it means that 
\begin{align*}
\mathbb{E}\left[\left(g\left(X^{t, x}_{T}\right)-Y^{{Y}^{t, x}_t,{Z}^{t, x},{U}^{t, x}}_{T}\right)^{2}\right]=0
\end{align*}
and the minimum is then achieved at $(y^\star,Z^\star,U^\star)=({Y}^{t, x}_t,{Z}^{t, x},{U}^{t, x})$. Moreover, any other minimizer should be a solution of the FBSDE, but being such a solution unique we can conclude  by Theorem \ref{th:FeynmanKac} that the last statement of the lemma holds.
%The corresponding quadruple $({X}^{t, x},{Y}^{t, x},{Z}^{t, x},{U}^{t, x})$ is the unique solution to the FBSDE \eqref{eq:forward}-\eqref{eq:backward} and, due to our assumptions (A1)-(A4) the solution is unique. The last equality is a restatement of Theorem \ref{th:FeynmanKac}.
\end{proof}

\subsection{The Deep BSDE solver by E-Han-Jentzen}
We provide an overview of the algorithm proposed in \cite{ehanjen17}. The main idea of their approach is to consider, in a setting without jumps, a discretized version of the stochastic control problem in \eqref{eq:stochControlProblem} and then approximate, at each time step,  the control process by using an artificial neural network (ANN). First of all, let us observe that in absence of jumps the dynamics constraints in \eqref{eq:stochControlProblem} becomes
\begin{equation}\label{dyn_no_jumps}
\begin{cases}
X ^{t, x}_ { s} =  x + \int _ { t } ^ { s } b ( X^{t, x} _ { r- }) \mathrm d r + \int _ { t } ^ { s } \sigma ( X ^{t, x}_ { r- })^\top \mathrm d W _ { r },  \\
Y^{y,Z}_{s}  =  y    -\int_{t}^{s} f (r, X^{t,x}_{r-}, Y^{y,Z}_{r-}, Z_{r} ) \mathrm d r+ \int_{t}^{s} (Z_{r})^\top \mathrm d W_{r}, \hspace{1cm}\hfill s\in[t,T],
\end{cases}
\end{equation}
and the control problem \eqref{eq:stochControlProblem} reduces to an optimization with respect to  $y\in \RR$ and $Z\in \mathbb{H}_{T}^{2}$.
Given $M\in\mathbb{N}$, a time discretization $0\leq t=t_0< t_1<\ldots <t_M=T$ is introduced. For simplicity let us assume that the mesh grid is uniform with step $\Delta t>0$. Let $\Delta W_{n}= W_{t_{n+1}} - W_{t_n}$ denote the increments of the Brownian motion. An Euler-Maruyama discretization of the FBSDE is considered, i.e. for $n =0,\ldots, M-1$ we define the following discrete time version of the dynamics \eqref{dyn_no_jumps},
\begin{equation}\label{EM_no_jumps}
\begin{cases}
{\widetilde X}_{{n+1}}    & =  {\widetilde X}_{n}  + b({\widetilde X}_{n}) \Delta t + \sigma({\widetilde X}_{n})  \Delta W_{n},\hfill \widetilde X_{0} = x,\\
{\widetilde Y}_{{n+1}} &=  {\widetilde Y}_{n} - f(t_n, {\widetilde X}_{n}, {\widetilde Y}_{n}, \widetilde{Z}_{n}) \Delta t + \widetilde{Z}_{n}^\top\Delta W_{n}, \hfill\hspace{1cm} {\widetilde Y}_0 = y. 
\end{cases}
\end{equation}
The next step is to represent, for each $n$, the control process $\widetilde Z_n$ in \eqref{EM_no_jumps} by using an artificial neural network (ANN). In particular,  feedforward ANN  with $\mathcal{L}+1\in\mathbb{N}\setminus \{1,2\}$ layers are employed. Each layer consists of $\upsilon_\ell$ \textit{nodes} (also called \textit{neurons}), for $\ell=0,\ldots,\mathcal{L}$. The $0$-th layer represents the \textit{input layer}, while the $\mathcal{L}$-th 
layer is called the \textit{output layer}. The remaining $\mathcal{L}-1$ layers are \textit{hidden layers}. For simplicity, we set $\upsilon_\ell=\upsilon$, $\ell=1,\ldots,\mathcal{L}-1$. In our setting, the dimension of the  input layer is set equal to $d$, i.e. the dimension of the forward process $X$.

A feedforward neural network is a function defined via the composition
\begin{align*}
\mathbb{R}^{d}\ni x  \longmapsto \mathcal{A}_{\mathcal{L}} \circ \varrho \circ \mathcal{A}_{\mathcal{L}-1} \circ \ldots \circ \varrho \circ \mathcal{A}_{1}(x),
\end{align*}  
where all $\mathcal{A}_\ell$, $\ell=1,\ldots,\mathcal{L}$, are affine transformations of the form
$\mathcal{A}_\ell(x):=\mathcal{W}_\ell x + \beta_\ell$,  $\ell=1,\ldots,\mathcal{L}$,
where $\mathcal{W}_\ell$ and $\beta_\ell$ are matrices and vectors of suitable size called, respectively,  weights and biases. The function $\varrho$, called \textit{activation function}, is a univariate function $\varrho\colon\mathbb{R}\to\mathbb{R}$ that is applied component-wise to vectors. With an abuse of notation, we denote
%\begin{align*}
$\varrho(x_1,\ldots,x_\upsilon)=\left(\varrho(x_1),\ldots,\varrho(x_\upsilon)\right).$
%\end{align*}
The elements of $\mathcal{W}_\ell$ and $ \beta_\ell$ are the parameters of the neural network. One can regroup all parameters in a vector of size $R=\sum_{\ell=0}^\mathcal{L}\upsilon_\ell(1+\upsilon_\ell)$.

Let, for each $n=0,\ldots, M-1$,  $\mathcal Z^{{\rho_n}}_n:\RR^d\to \RR^d$ be  an ANN with parameters ${\rho_n}\in \RR^{R}$. Replacing $Z_n$ with $\mathcal Z^{{\rho_n}}_n({\widetilde X}_n)$ in \eqref{EM_no_jumps} one obtains the following  dynamics for the process $\widetilde Y$ parametrized by $\rho{=(\rho_0,\ldots, \rho_{M-1})\in (\RR^{R})^M}$
\begin{equation}\label{ANN_no_jumps}
\widetilde Y^{y,\rho}_{n+1} =  {\widetilde Y^{y,\rho}}_{n} - f(t_n, {\widetilde X}_{n}, \widetilde Y^{y,\rho}_{n}, \mathcal{Z}^{{\rho_n}}_{n}({\widetilde X}_{{n}})) \Delta t + (\mathcal{Z}^{{\rho_n}}_{n}({\widetilde X}_{{n}}))^\top\Delta W_{n}, \hfill\hspace{1cm} {\widetilde Y^{y,\rho}}_0 = y. 
\end{equation}
Observe that the solver involves the introduction of $M$ distinct neural networks, one network for each time step. 

The family of neural network is then trained simultaneously over a set of simulated Monte Carlo samples of the dynamics via a standard training algorithm such as stochastic gradient descent or its extensions (one notable example in this sense being given by the ADAM algorithm, see \cite{adam2015}) in order to minimize with respect to $y\in \RR$ and $\rho \in \RR^{R({\mathcal Z})} $ the loss function 
\begin{equation}\label{loss_nojumps}
\EE\left[\left(g(\widetilde X_{M})-\widetilde Y^{y,\rho}_{M}\right)^{2}\right].
\end{equation}

\section{Deep Solver with jumps}\label{sec:algorithm}
The aim of this section is to present our proposed extension of the Deep BSDE solver of \cite{ehanjen17} to cover the case of the FBSDE with jumps \eqref{eq:forward}-\eqref{eq:backward}.

The main challenge in view of the extension is represented by the case where the forward (and consequently the backward) process exhibits infinitely many jumps, be it of finite or infinite variation. Clearly, such jumps cannot be simulated exactly on a pre-specified grid of time instants. This problem has been first encountered in the literature on the discretization and simulation of L\'evy-driven stochastic differential equations, see \cite{fournier2011} among others. For the reader's convenience, we split the presentation of our solver in two subsections. In the first subsection we only consider the case with finitely many jumps since it can be simulated exactly via an Euler type discretization. In the second subsection we present the additional steps that we propose in order to cover the infinite activity case.

\subsection{Finite activity case}\label{sec:finite}
In this subsection we assume that the L\'evy measure satisfies the condition $\nu(\RR^d)<\infty$, meaning that the jumps are those of a compound Poisson process. We fix $M\in\NN$ and introduce a uniform time discretization $0\leq t=t_0<t_1<\ldots<t_M=T$ with time step $\Delta t =t_{n+1}-t_n$. As a starting point, we consider the forward SDE \eqref{eq:forward} between two subsequent time steps $t_n$ and $t_{n+1}$, namely,
\begin{align*}
X _ { t_{n+1} } &  = X _ { t_{n} } + \int _ { t_n } ^ { t_{n+1} } b \left( X _ { r- } \right) \mathrm d r + \int _ { t_n } ^ { t_{n+1} } \sigma \left( X _ { r- } \right)^\top \mathrm d W _ { r }  +  
\int^{t_{n+1}}_{t_n} \int_{\RR^d} \Gamma \left( X _ { r- } , z\right) \tilde N(\mathrm d r, \mathrm d z),
\end{align*}
 with $X _ { t_{0} } =   x \in \mathbb { R } ^ { d }$.
We freeze the coefficients between two consecutive time steps introducing the discrete time process $\widetilde X=(\widetilde X_n)_{n=0,\ldots, M}$, defined recursively by 
$$
\widetilde{X}_{n+1 }  = \widetilde{X} _ { {n} } + \int _ { t_n } ^ { t_{n+1} } b ( \widetilde{X} _ { n } ) \mathrm d r + \int _ { t_n } ^ { t_{n+1} } \sigma ( \widetilde{X} _ {n } )^\top \mathrm d W _ { r }  +  
\int^{t_{n+1}}_{t_{n}} \int_{\RR^d} \Gamma ( \widetilde{X} _ { n } , z) \tilde N(\mathrm d r, \mathrm d z),\qquad
n=0,\ldots, M-1,
$$
with $\widetilde{X} _ {0}  = x.$
We write  $N([0,t_n],\RR^d)$ to denote the number of $\RR^d$-valued jumps occurring over the time interval $[0,t_n]$. The previous equation can then  be rewritten as
\begin{align}
\label{eq:forwardEuler}
\begin{aligned}
\widetilde{X} _ {n+1} &  = \widetilde{X} _ {n} + b ( \widetilde{X} _{n} ) \Delta t + \sigma ( \widetilde{X} _ { n } )^\top \Delta W _ { n} \\
&\quad +  \sum_{i=N([0,t_n],\RR^d)+1}^{N([0,t_{n+1}],\RR^d)}\Gamma ( \widetilde{X} _ {n } , z_i)
-\Delta t  \int_{\RR^d} \Gamma ( \widetilde{X} _ {n } , z) \nu(\mathrm d z), \qquad
n=0,\ldots, M-1.
\end{aligned}
\end{align}

We proceed similarly for the backward dynamics  \eqref{eq:backward}. We consider again two consecutive time instants $t_n$ and $t_{n+1}$
\begin{align*}
 Y_{t_{n+1}}    = &   Y_{t_{n}} -\int_{t_{n}}^{t_{n+1}} f \Big(r, X_{r-}, Y_{r-}, Z_{r}, \int_{\RR^d} U_r(z) \nu(\mathrm d z) \Big) \mathrm d r+ \int_{t_{n}}^{t_{n+1}} Z_{r}^\top \mathrm d W_{r}  + \int_{t_{n}}^{t_{n+1}} \int_{\RR^d} U_r(z) \tilde N(\mathrm d r, \mathrm d z). 
\end{align*}
We now make use of the Feynman-Kac Theorem \ref{th:FeynmanKac}, meaning that above dynamics can be read as follows 
\begin{align}
\label{eq:BSDE_tn}
\begin{split}
 Y_{t_{n+1}}=&Y_{t_{n}} -\int_{t_{n}}^{t_{n+1}} f \Big(r, X_{r-}, Y_{r-}, \sigma({X}_{r-})^\top D_x u(r, {X}_{r-}) , \int_{\RR^d} u(r,X_{r-}+\Gamma(X_{r_-},z))-u(r,X_{r-}) \nu(\mathrm d z) \Big) \mathrm d r\\
 &+\int_{t_{n}}^{t_{n+1}} (D_x u(r, {X}_{r-}))^\top\sigma({X}_{r-}) \mathrm d W_{r} + \int_{t_{n}}^{t_{n+1}} \int_{\RR^d} u(r,X_{r-}+\Gamma(X_{r_-},z))-u(r,X_{r-}) \tilde N(\mathrm d r, \mathrm d z).
\end{split}
\end{align}
Next, we freeze again the coefficients and consider the following discrete time approximation

\begin{align}
\label{eq:disc_BSDE}
\begin{split}
 \widetilde{Y}_{{n+1}}    = &    \widetilde{Y}_{{n}} -f \Big(t_{n}, \widetilde{X}_{{n}}, \widetilde{Y}_{{n}},  \sigma(\widetilde {X}_{n})^\top D_x u(t_n, \widetilde {X}_{n}), \int_{\RR^d} u(t_n,\widetilde X_{n}+\Gamma(\widetilde X_{n},z))-u(t_n,\widetilde X_{n}) \nu(\mathrm d z)   \Big) \Delta t\\
 & + (D_x u(t_n, \widetilde {X}_{n}))^\top\sigma(\widetilde {X}_{n})  \Delta W_{n}  +  \Delta u(t_n,\widetilde{X}_n, \Delta \widetilde{X}_n)-  C(t_n,\widetilde{X}_n)\Delta t.\\
 \end{split}
\end{align}
where we have defined the jumps of the forward SDE, the jumps of the BSDE and compensator of the BSDE, respectively as
\begin{align}
\label{eq:SDE_jump}
\Delta \widetilde{X}_n &= \sum_{i=N([0,t_n],\RR^d)+1}^{N([0,t_{n+1}],\RR^d)}\Gamma(\widetilde{X}_{n},z_i),\\
\label{eq:BSDE_jump}
    \Delta u(t_n,\widetilde{X}_n, \Delta \widetilde{X}_n)&=u(t_n,\widetilde{X}_{n}+\Delta \widetilde{X}_n)-u(t_n,\widetilde{X}_{n}),\\ \label{eq:compensator}
    C(t_n,\widetilde{X}_n)&=\int_{\RR^d} u(t_n,\widetilde{X}_{n}+\Gamma(\widetilde{X}_{n},z))-u(t_n,\widetilde{X}_{n}) \nu(\mathrm d z).
\end{align}

At this point, we introduce the neural networks used to approximate the solution of the FBSDE \eqref{eq:forward}-\eqref{eq:backward}. Similar to the algorithm proposed in \cite{Han2018}, we introduce two single trainable parameters to approximate the deterministic initial value $Y_0$, and the deterministic gradient $D_x u(0,x)$, respectively. Moreover, we use a sequence of neural networks to approximate $D_x u$ at the discrete time points $\{t_1,t_2,\ldots,t_{M-1}\}$. Since we consider BSDEs with jumps we have to approximate additional terms in the BSDE (the last two terms in \eqref{eq:disc_BSDE}). Below we introduce all approximations and explain the algorithm. 

We employ first three sequences of neural networks to parametrize the functions $D_x u$, $\Delta u$ and $C$. Thus, for $n=1,2,\ldots,M-1$, we introduce the mappings $\mathcal{D}_x\mathcal{U}_{n}^{\rho_n}:\RR^d\to \RR^d$, $\Delta \mathcal{U}_n^{\varphi_n}\colon\RR^{2d}\to\RR$ and $ \mathcal{C}_n^{\vartheta_n}\colon\RR^{d}\to\RR$ parametrized by $\rho_n\in\RR^{R_\rho}$, $\varphi_n\in \RR^{R_\varphi}$ and $\vartheta_n\in\RR^{R_\vartheta}$, respectively. Here $R_\rho\in\NN$, $R_\varphi\in\NN$ and $R_\vartheta\in\NN$ represents the number of trainable parameters in each neural network. Note that we have not yet introduced approximations for $D_x u$, $\Delta u$ and $C$ at the initial time point $t_0$. The reason for this is that we consider a fixed, deterministic initial state $\widetilde{X}_0=x$ and hence $D_x u$ and $C$ are fixed, deterministic and $C$ depends only on potential jumps in $(t_0,t_1]$. Moreover, as stated above, in order to apply a forward Euler--Maruyama scheme on the BSDE we need an approximation of $Y_0$. Thus, we introduce the single trainable parameters $\widetilde{Y}_0\in\RR$, $\mathcal{D}_x\mathcal{U}_0\in\RR^d$ and $\mathcal{C}_0\in\RR$ and the neural network $\Delta \mathcal{U}_0^{\varphi_0}\colon \RR^d\to \RR$, parameterized by $\varphi_0\in\RR^{R_\varphi^0}$, where $R_\varphi^0\in\NN$ is the number of trainable parameters in $\Delta \mathcal{U}_0^{\varphi_0}$. To summarize, we have the trainable parameters $\widetilde{Y}_0\in\RR$, $\rho=\text{concat}(\mathcal{D}_x\mathcal{U}_0,\rho_1,\ldots,\rho_{M-1})\in \RR^{d+R_\rho\times(M-1)}$, $\varphi=\text{concat}(\varphi_0,\varphi_1,\ldots,\varphi_{M-1})\in\RR^{R_\varphi^0 + R_\varphi\times(M-1)}$ and $\vartheta=\text{concat}(\mathcal{C}_0,\vartheta_1,\ldots,\vartheta_{M-1})\in\RR^{d + R_\vartheta\times(M-1)}$, where $\text{concat}(\cdot)$ is a concatenation of a list of tensors into a vector. Finally, we define the collection of all trainable parameters as $\Theta=\text{concat}(\widetilde{Y}_0, \rho,\varphi,\vartheta)\in\RR^R$, where $R$ is the total number of trainable parameters in the full model. In Table \ref{tab:NN_summary}, we summarize all functions and constants that are approximated. 
\begin{table}[h]
\centering
\begin{tabular}{|l|l|l|}
\hline
\textbf{Function to Approximate}  & \textbf{Approximation Notation} & \textbf{Input} \\ \hline
\multicolumn{3}{|c|}{\textbf{Approximations at \( t = 0, \text{ i.e., } n=0\)}} \\ \hline
\( Y \)                    & \( \widetilde{Y}_0 \)      & None (constant) \\
\( D_x u \)            & \( \mathcal{D}_x \mathcal{U}_0 \) &  None (constant)  \\
\( \Delta u \)             & \( \Delta \mathcal{U}_0^{\varphi_0} \)  & \( \Delta \widetilde{X}_0 \) \\
\( C \)                    & \( \mathcal{C}_0 \)         & None (constant)  \\ \hline
\multicolumn{3}{|c|}{\textbf{Approximations for \( t > 0, \text{ i.e., } n\in\{1,2,\ldots,M-1\} \)}} \\ \hline
\( D_x u \) & \( \mathcal{D}_x \mathcal{U}_n^{\rho_n} \) & \( \widetilde{X}_n \) \\
\( \Delta u \) & \( \Delta \mathcal{U}_n^{\varphi_n} \) & \( \widetilde{X}_n,\  \Delta \widetilde{X}_n \) \\
\( C \) & \( \mathcal{C}_n^{\vartheta_n} \) & \( \widetilde{X}_n \) \\ \hline
\end{tabular}
\caption{{Summary of neural network approximations of the functions in the BSDE with jumps.}}
\label{tab:NN_summary}
\end{table}

The aim is then to construct a loss function such that when minimized with respect to $\Theta$, our model accurately approximates the solution of the BSDE. Our starting point is the mean squared error of the terminal condition, which is the entire loss function used in \cite{Han2018} for regular BSDEs. In the presence of jumps, we must also ensure that both the jumps in the BSDE and the compensator are accurately captured by our neural network approximations. By comparing \eqref{eq:disc_BSDE} with \eqref{eq:BSDE_tn} it is clear that we want to approximate \begin{equation*}
   \sum_{n=0}^{M-1} \int_{t_{n}}^{t_{n+1}} \int_{\RR^d} u(r,X_{r-}+\Gamma(X_{r_-},z))-u(r,X_{r-}) \tilde N(\mathrm d r, \mathrm d z)
\end{equation*} 
with
\begin{equation*}
        \Delta \mathcal{U}_0^{\varphi_0}(\Delta \widetilde{X}_0)-\mathcal{C}_0\Delta t+\sum_{n=1}^{M-1}\Delta\mathcal{U}_n^{\varphi_n}(\widetilde{X}_n,\Delta \widetilde{X}_n)-\mathcal{C}_n^{\vartheta_n}(\widetilde{X}_{n})\Delta t.
\end{equation*}
Recall that the stochastic integral with respect to the compensated Poisson random measure is a square integrable martingale under our assumptions, which implies that
\begin{align*}
 \mathbb{E}&\left[\int_{t_{n}}^{t_{n+1}} \int_{\RR^d} u\left(r,X_{r-}+\Gamma(X_{r_-},z)\right)-u\left(r,X_{r-}\right) \tilde{N}(\mathrm d r, \mathrm d z)\right]=0.
\end{align*}
To enforce this structure on our approximations we introduce an additional term in the loss function based on
\begin{equation*}
        \mathbbm{E}\Big[\Delta \mathcal{U}_0^{\varphi_0}(\Delta \widetilde{X}_0)-\mathcal{C}_0\Delta t\Big]^2+\sum_{n=1}^{M-1}\mathbbm{E}\Big[\Delta\mathcal{U}_n^{\varphi_n}(\widetilde{X}_{n},\Delta \widetilde{X}_n)-\mathcal{C}_n^{\vartheta_n}(\widetilde{X}_{n})\Delta t\Big]^2.
\end{equation*}

This concludes the description of the algorithm for the finite activity case. To summarize, our approach considers the discretized stochastic optimal control problem below. For some constant $K>0$
\begin{align}\label{eq:SOCP}
&\begin{aligned}
 \underset{\Theta\in\RR^R}{\text{minimize}}\quad &\mathbb{E}\left[\left(g(\widetilde{X}_{M})-\widetilde Y_{M}^\Theta\right)^{2}\right] + K\times\sum_{n=0}^{M-1}\mathbb{E}\big[\Delta \mathcal{U}_n^\Theta - \mathcal{C}_n^\Theta\Delta t\big]^2
\end{aligned}\\
&\text{subject to:}\nonumber\\
&
\begin{cases}
\widetilde{X}_0=&x,\quad \widetilde{Y}_0^\Theta = \widetilde{Y}_0,\quad \mathcal{Z}_0^\Theta=\sigma(x)^\top \mathcal{D}_x\mathcal{U}_0,\quad \Delta\mathcal{U}_0^\Theta=\Delta \mathcal{U}_0(\Delta \widetilde{X}_0),\quad \mathcal{C}_0^\Theta=\mathcal{C}_0
\vspace{0.05cm}\\
\widetilde{X}_{n+1}=&\widetilde{X} _ {n} + b ( \widetilde{X}_n) \Delta t + \sigma ( \widetilde{X} _ { n } )^\top \Delta W_n  +  \Delta \widetilde{X}_n -\Delta t  \int_{\RR^d} \Gamma ( \widetilde{X}_n , z) \nu(\mathrm d z)\nonumber\\
\widetilde {Y}^\Theta_{n+1}    = &   \widetilde{Y}^\Theta_n -f \left(t_{n}, \widetilde{X}_{n}, \widetilde{Y}^\Theta_n,  \mathcal{Z}_n^\Theta, \mathcal{C}_n^\Theta\right) \Delta t + \mathcal{Z}_n^\Theta \Delta W_{n} + \Delta\mathcal{U}_n^\Theta - \mathcal{C}_n^\Theta\Delta t \nonumber \\
\mathcal{Z}_n^\Theta=&\sigma(\widetilde{X}_n)^\top\mathcal{D}_x\mathcal{U}_n^{\rho_n}(\widetilde{X}_n),\quad \Delta\mathcal{U}_n^\Theta=\Delta \mathcal{U}_n^{\varphi_n}(\widetilde{X}_n,\Delta \widetilde{X}_n),\quad \mathcal{C}_n^\Theta=\mathcal{C}_n^{\vartheta_n}(\widetilde{X}_n).
\end{cases}
\end{align}
To avoid introducing overly cumbersome notation, we have allowed a slight abuse of notation in the above equation, as the last line is valid only for $n\geq 1$.

\begin{remark}
Note that $\Delta u$, which represents changes in $u$ due to jumps in $X$, is identically zero when no jumps occur between adjacent time points. Thus, it is reasonable to structure $\Delta \mathcal{U}_n^{\varphi_n}(\widetilde{X}_n, \Delta \widetilde{X}_n)$ to enforce this property. One way to achieve this is by defining $\Delta\mathcal{U}_n^\Theta = \Delta \mathcal{U}_n^{\varphi_n}(\widetilde{X}_n, \Delta \widetilde{X}_n) \Delta \widetilde{X}_n$. This approach is used for all numerical experiments in this paper, however, alternative methods could be considered, particularly for high-dimensional settings with complex terminal conditions. For example, a more general alternative might involve an indicator function that returns one if a jump occurs in any of the $d$ dimensions of $X$ and zero otherwise.
\end{remark}

\color{black}
\subsection{Infinite activity case}\label{sec:infinite}

To cover the case where the forward process $X$ exhibits infinitely many jumps, we proceed by introducing an approximating jump diffusion $X^\epsilon$ with finitely many jumps, meaning that the jump component corresponds to a compound Poisson process. The small jumps might be truncated or, as we do, they could be approximated via the diffusion term by increasing the volatility. This constitutes the first source of numerical error of our algorithm.  This error is empirically analyzed in the numerical results section\color{black}. Concerning this first approximation step for the forward SDE for $X$ we follow, among others \cite{ar2001}, \cite{cr2007} and \cite{jum2015}

Moving from the process $X$ to the process $X^\epsilon$ has an implication on the FBSDE as well. To proceed, we need to introduce some additional assumptions, in particular regarding the structure of the Poisson random measure: let $\epsilon\in(0,1]$. We define 
\begin{align*}
\nu^{\epsilon}(\de z):=\mathbbm{1}_{\{|z|>\epsilon\}} \nu(\de z),\quad \nu_{\epsilon}(\de z):=\mathbbm{1}_{\{|z|\leq\epsilon\}} \nu(\de z).
\end{align*}
It then holds that $\nu=\nu_{\epsilon}+\nu^{\epsilon}$.
From the definition of the L\'evy measure $\nu$ we immediately have $\int_{\mathbb{R}^{d}}|z|^{2} \nu_{\epsilon}(d z)< \infty \;$ and $\; \nu^{\epsilon}\left(\mathbb{R}^{d}\right)<\infty \;$.
We are factorizing the jump term in two components, the first one corresponding to the \textit{small} jumps, and the second one to the \textit{big} jumps, i.e. a compound Poisson process. The factorization of the L\'evy measure means that we are assuming that we can write the Poisson Random Measure $N$  as 
\begin{align}
N=N_\epsilon+N^{\epsilon} \; ,
\end{align}
with $N_\epsilon$ and $N^{\epsilon}$ having the compensator $\nu_{\epsilon}$ and $\nu^{\epsilon}$, respectively.

In line with \cite{cr2007} we introduce the following quantity
\begin{align*}
\Sigma_\epsilon:=\int_{\RR^d}zz^\top \nu_\epsilon(\de z)=\int_{|z|<\epsilon}zz^\top \nu(\de z),
\end{align*} 
taking values in the cone of positive semidefinite $d\times d$ matrices $S^d_+$. We also introduce a further assumption on the jump term of the forward SDE:
\begin{itemize}
\item[(A5)] 
the function  $\Gamma\colon\RR^d\times \RR^d\to \RR^d$ is of the form
$$
\Gamma(x,z) \coloneq \gamma(x) z
$$
for some Lipschitz continuous function $\gamma:\RR^d\to\RR^{d\times d}$.
\end{itemize}
The remaining assumptions on the vector fields $b,\sigma,\Gamma$ are unchanged with respect to \eqref{eq:forward}.

Given $(t,x)\in [0,T]\times \RR^d$, we approximate the solution $X^{t,x}_\cdot$ of the forward SDE \eqref{eq:forward} by means of the process $X^{\epsilon,t,x}_\cdot$, whose dynamics are
\begin{align}
\begin{aligned}
X^{\epsilon,t,x} _ { s }   = &  x + \int _ { t } ^ { s } b( X^{\epsilon,t,x}  _ { r- } ) \mathrm d r + \int _ { t } ^ { s }\Big( \sigma ( X^{\epsilon,t,x}  _ { r- } )^\top
 +\gamma ( X^{\epsilon,t,x}  _ { r- } )\sqrt{\Sigma_\epsilon}\Big) \mathrm d W _ { r }+  
\int^s_t \int_{\RR^d} \gamma ( X^{\epsilon,t,x}  _ { r- } )z \tilde N^{\epsilon}(\mathrm d r, \mathrm d z),
 \label{eq:forwardEps}
 \end{aligned}
\end{align}
where $\sqrt{\Sigma_\epsilon}$ denotes the matrix square root of $\Sigma_\epsilon\in S^d_+$ and we set $\tilde N^{\epsilon}\coloneq N^{\epsilon}(\mathrm d r, \mathrm d z)-\nu^\epsilon(\mathrm d z)\mathrm d r$. Again, whenever possible, we will simply use the notation $X^\epsilon$, thus neglecting the dependence on the starting point  $(t,x)$. 
A priori error estimates of the error we introduce by approximating $X$ via $X^{\epsilon}$ can be derived following, for instance,  \cite{cr2007} and \cite{jum2015} and will be discussed in the next section.

At this point we apply the solver as described in Section \ref{sec:finite} to the following BSDE
\begin{align}
 Y^{\eps\eps}_{t}    = &  g (X^\epsilon_T) +\int_{t}^{T} f (s, X^\epsilon_{r-}, Y^{\eps\eps}_{r-}, Z^{\eps\eps}_{r}, \int_{\RR^d} U^{\eps\eps}(r,z) \nu^\epsilon(\mathrm d z) ) \mathrm d r- \int_{t}^{T} {Z^{\eps\eps}_{r}}^{\top}  \mathrm d W_{r} \nonumber \\
& - \int^T_t \int_{\RR^d} U^{\eps\eps}_r(z) \tilde N^{\epsilon}(\mathrm d r, \mathrm d z). \label{eq:backwardepseps}
\end{align}
whose solution $(Y^{\eps\eps},Z^{\eps\eps}, U^{\eps\eps})$ is intended to approximate $(Y,Z,U)$ solving \eqref{eq:backward}. {By the above approximation, we are left with a finite activity FBSDE, which can be approximated with the algorithm outlined in Section \ref{sec:finite}.}

\begin{remark}
    The  use of a double superscript ${\ }^{\eps\eps}$ in \eqref{eq:backwardepseps} is motivated by the introduction, later in  Section \ref{sec:errorBack}, of an intermediate approximation step denoted with a single superscript ${\ }^\eps$.
\end{remark}

\section{Error estimates}\label{sec:errorEstimates}
In this section we present a priori error estimates for the error introduced by the approximation of small jumps in the infinite activity case on the solution of the FBSDE. 
\subsection{A priori error estimates for the forward approximation}\label{sec:errorforward}
We start by providing the $L^2$ estimate of the error that we introduce by approximating $X_\cdot$ via $X^{\epsilon}_\cdot$.
 We proceed along the lines of  \cite{cr2007} and \cite{jum2015}. {We report the main step of the proof  for the sake of completeness.  }

\begin{theorem}[{Theorem 4.4, \cite{jum2015}}]\label{teo:est_forward}
Under the assumptions (A1)-(A2), (A5), there exists a constant $C>0$ such that
\begin{align}
\mathbb{E}\left[\sup_{s\in [t,T]}\left|X_s-X^\epsilon_s \right|^2\right]\leq C(1+|x|^2)\int_{\RR^d}|z|^2\nu_\epsilon(\de z),\qquad \forall (t,x)\in [0,T]\times \RR^d.
\end{align}
\end{theorem}

\begin{proof} {In the following we do not keep track of constants. 
%Observing that }
% \begin{align*}
% \mathbb{E}&\left[\sup_{s\in [t,T]}\left|X_s-X^\epsilon_s \right|^2\right]\\
% %&\quad=\mathbb{E}\left[\sup_{s\in [t,T]}\left|\int _ { t } ^ { s } b \left( X _ { r- } \right)-b \left( X^\epsilon _ { r- } \right) \mathrm d r +\int _ { t } ^ { s } \sigma \left( X _ { r- } \right)^\top-\sigma \left( X^\epsilon _ { r- } \right)^\top-\gamma \left( X^\epsilon _ { r- } \right)\sqrt{\Sigma_\epsilon} \mathrm d W _ { r }\right.\right.\\
% %&\quad\quad\quad\quad\quad\quad \left.\left.+\int^s_t \int_{\RR^d} \gamma \left( X _ { r- } \right)z \tilde N(\mathrm d r, \mathrm d z)-\int^s_t \int_{\RR^d} \gamma \left( X^\epsilon _ { r- } \right)z \tilde N^{\epsilon}(\mathrm d r, \mathrm d z)\right|^2\right]\\
% &=\mathbb{E}\left[\sup_{s\in [t,T]}\bigg|\int _ { t } ^ { s } b \left( X _ { r- } \right)-b \left( X^\epsilon _ { r- } \right) \mathrm d r +\int _ { t } ^ { s } \sigma \left( X _ { r- } \right)^\top-\sigma \left( X^\epsilon _ { r- } \right)^\top-\gamma \left( X^\epsilon _ { r- } \right)\sqrt{\Sigma_\epsilon} \mathrm d W _ { r }\right.\\
% &\quad\quad\quad\quad\quad\quad +\int^s_t \int_{\RR^d} \gamma \left( X _ { r- } \right)z \tilde N(\mathrm d r, \mathrm d z)-\int^s_t \int_{\RR^d} \gamma \left( X^\epsilon _ { r- } \right)z (\underbrace{N-\nu^\epsilon-\nu_\epsilon}_{\tilde{N}})(\mathrm d r, \mathrm d z)\\
% &\quad\quad\quad\quad\quad\quad \left.+\int^s_t \int_{\RR^d} \gamma \left( X^\epsilon _ { r- } \right)z  (\underbrace{N_{\epsilon}-\nu_\epsilon}_{\tilde{N}_{\epsilon}})(\mathrm d r, \mathrm d z)\bigg|^2\right],
% \end{align*}
Applying Theorem 2.11 in \cite{Kunita2004} with $p=2$, we get the following estimate
\begin{align*}
\mathbb{E}&\left[\sup_{s\in [t,T]}\left|X_s-X^\epsilon_s \right|^2\right]\\
&\leq C\bigg\{\mathbb{E}\left[\int_t^T \left|b \left( X _ { r- } \right)-b \left( X^\epsilon _ { r- } \right)\right|^2\de r\right]+ \mathbb{E}\left[\int_t^T \left|\sigma \left( X _ { r- } \right)^\top-\sigma \left( X^\epsilon _ { r- } \right)^\top-\gamma \left( X^\epsilon _ { r- } \right)\sqrt{\Sigma_\epsilon} \right|^2\de r\right]\\
&+\mathbb{E}\left[\int_t^T  \int_{\RR^d} \left|\gamma \left( X _ { r- } \right)-\gamma \left( X^\epsilon _ { r- } \right)\right|^2\left|z\right|^2\nu(\de z)\de r\right]+\mathbb{E}\left[\int_t^T  \int_{\RR^d} \left|\gamma \left( X^\epsilon _ { r- } \right)\right|^2\left|z\right|^2\nu_\epsilon(\de z)\de r\right]\bigg\}.
\end{align*}
Exploiting the Lipschitz property of the coefficient functions $b,\sigma,\gamma$, one obtains
\begin{align*}
\mathbb{E}&\left[\sup_{s\in [t, T]}\left|X_s-X^\epsilon_s \right|^2\right]\\
& {\leq  C\left\{\left(1 + \int_{\mathbb{R}^{d}}|z|^{2} \nu(d z)\right)\mathbb{E}\left[\int_t^T \left|X _ { r }-X^\epsilon _ { r}   \right|\de r\right]  + \left(\left|\sqrt{\Sigma_\eps}\right|^2  + \int_{\mathbb{R}^{d}}|z|^{2} \nu_{\epsilon}(d z)\right)\mathbb{E}\left[ \int^T_0 1 + \left|X^\eps_r\right|^2 \de r \right] \right\} }
\end{align*}
{so that, observing that $\left|\sqrt{\Sigma_\eps}\right|^2= \int_{\mathbb{R}^{d}}|z|^{2} \nu_{\epsilon}(d z)$ (see \cite[Proposition 4.1.2]{jum2015}) and making use of the classical moment estimate 
\begin{align}\label{eq:growth}
\mathbb{E}\left[\sup_{ s\in [t,T]}\left|X^{\epsilon}_s \right|^2\right]\leq C\left(1+|x|^{2}\right),\qquad \forall (t, x)\in [0,T]\times \RR^d, \text{for some $C> 0$,}
\end{align}
we can show the existence of a  constant $C>0$ such that }
$$
\mathbb{E}\left[\sup_{s\in [t, T]}\left|X_s-X^\epsilon_s \right|^2\right] \leq  C\left\{\mathbb{E}\left[\int_t^T \left|X _ { r }-X^\epsilon _ { r}   \right|\de r\right]  + (1+|x|^2) \int_{\mathbb{R}^{d}}|z|^{2} \nu_{\epsilon}(d z) \right\}.
%& \cred{\leq  C\left\{\mathbb{E}\left[\int_t^T \left|X _ { r }-X^\epsilon _ { r}   \right|\de r\right]  + (1+|x|^2)\left(1+ \int_{\mathbb{R}^{d}}|z|^{2} \nu_{\epsilon}(d z) \right)\right\} }
%&\cred{\cancel{\leq C\mathbb{E}\left[\int_t^T \left|X _ { r }-X^\epsilon _ { r}   \right|\de r\right]+\int_{\mathbb{R}^{d}}|z|^{2} \nu_{\epsilon}(d z)\mathbb{E}\left[\int_t^T1+\left|X^\epsilon _ { r} \right|^2 \de r\right]}}\\
%&\cred{\cancel{\leq C\mathbb{E}\left[\int_t^T \left|X _ { r }-X^\epsilon _ { r}   \right|\de r\right]+\int_{\mathbb{R}^{d}}|z|^{2} \nu_{\epsilon}(d z)C^\prime\left(1+\left\|X^\epsilon _ { r} \right\|^2 \right).}}
$$
The result follows by Gronwall's inequality.}
\end{proof}

\subsection{A priori error estimates for the backward approximation}\label{sec:errorBack}

 In this section we  estimate the error induced by the approximation of small jumps on the solution $(Y,Z,U)\in S^2_T\times \mathbb H_{T} \times \mathbb H_{T,N}$ to \eqref{eq:backward}.
Let $(Y^{\eps\eps},Z^{\eps\eps},U^{\eps\eps})\in S^2_T\times \mathbb H_{T} \times \mathbb H_{T,N^\eps}$ be the solution to \eqref{eq:backwardepseps}.
{The error between $(Y,Z,U)$ and $
(Y^{\eps\eps},Z^{\eps\eps},U^{\eps\eps})$ can be seen as the result of two contributions: the error coming from the forward approximation (i.e. the replacement of $X$ with $X^\eps$) and the one related to the replacement of the infinite activity driving jumps of the backward dynamics \eqref{eq:backward} with the finite activity one appearing in  \eqref{eq:backwardepseps}. To estimate the first contribution to the error one can combine the results of Section \ref{sec:errorforward} with classical continuous dependence results for FBSDEs as those in \cite{Delongbook}. The main part of the proof is devoted to estimate  the second error term. We point out that similar estimates for the first component of the BSDE ($Y$) have previously  obtained in \cite{drz2021} by means of probabilistic techniques.}

We have the following result 
\begin{theorem}\label{teo:errorback}
Under assumptions (A1)-(A5), there exists a constant $C>0$ such that the following estimates hold
$$
\left\|Y-Y^{\eps\eps}\right\|_{\mathbb{S}^{2}}^{2} + \left\|Z-Z^{\eps\eps}\right\|_{\mathbb{H}^{2}}^{2} +\left\|U-U^{\eps\eps}\right\|_{\mathbb{H}^{2}_{N^\eps}}^{2} \leq C (1+|x|^2) \int_{\RR^d} |z|^2 \nu_\eps (\de z)\qquad \forall x\in \RR^d.
$$
for some positive constant $C$.
\end{theorem}
%We consider the original BSDE \eqref{eq:backward}, namely:
%\begin{align}
% Y_{t}    = &  g (X_T) +\int_{t}^{T} f \left(s, X_{s-}, Y_{s-}, Z_{s}, \int_{\RR^d} U_s(z) \nu(\mathrm d z) \right) \mathrm d s- \int_{t}^{T} Z_{s}^\top \mathrm d W_{s} \nonumber \\
%& - \int^T_t \int_{\RR^d} U_s(z) \tilde N(\mathrm d s, \mathrm d z).
%\end{align}

\begin{proof}
Let us consider, in addition to \eqref{eq:backward} and \eqref{eq:backwardepseps}, a third BSDE where we only substitute the forward process with the jump diffusion approximation and keep the original Poisson measure $N$ for the jump component:
\begin{equation}\label{eq:backwardeps}
\begin{aligned}
 Y^\epsilon_{t}    = &  g (X^\epsilon_T) +\int_{t}^{T} f \left(s, X^\epsilon_{s-}, Y^\epsilon_{s-}, Z^\epsilon_{s}, \int_{\RR^d} U^\epsilon_s(z) \nu(\mathrm d z) \right) \mathrm d s- \int_{t}^{T} {Z_{s}^{\epsilon}}^\top \mathrm d W_{s}  \\
& - \int^T_t \int_{\RR^d} U^\epsilon_s(z) \tilde N(\mathrm d s, \mathrm d z). 
\end{aligned}
\end{equation}
%And finally the approximating BSDE where we substitute the Poisson Random measure with the approximated one
%\begin{align}
% Y^{\epsilon\epsilon}_{t}    = &  g (X^\epsilon_T) +\int_{t}^{T} f \left(s, X^\epsilon_{s-}, Y^{\epsilon\epsilon}_{s-}, Z^{\epsilon\epsilon}_{s}, \int_{\RR^d} U^{\epsilon\epsilon}_s(z) \nu^\epsilon(\mathrm d z) \right) \mathrm d s- \int_{t}^{T} {Z_{s}^{\epsilon\epsilon}}^\top  \mathrm d W_{s} \nonumber \\
%& - \int^T_t \int_{\RR^d} U^{\epsilon\epsilon}_s(z) \tilde N^{\epsilon}(\mathrm d s, \mathrm d z). \label{eq:backwardepseps}
%\end{align}

We clearly have 
\begin{align}\label{eq:Y-Yee}
\begin{aligned}
\left\|Y-Y^{\eps\eps}\right\|_{\mathbb{S}^{2}}^{2} \coloneq 
& \mathbb{E}\left[\sup_{ t \in [0,T]}\left|Y_{t} -Y^{\epsilon\epsilon}_{t}\right|^2\right] =\mathbb{E}\left[\sup_{t \in [0,T]}\left|Y_{t}-Y^{\epsilon}_{t}+Y^{\epsilon}_{t} -Y^{\epsilon\epsilon}_{t}\right|^2\right]
\\
&%\leq 2\left( \left\|Y-Y^{\epsilon}\right\|_{\mathbb{S}^{2}}^{2} + \left\|Y^{\eps}-Y^{\eps\eps}\right\|_{\mathbb{S}^{2}}^{2}\right).
\leq2 \left(\left\|Y-Y^{\eps}\right\|_{\mathbb{S}^{2}}^{2}+\left\|Y^\eps-Y^{\eps\eps}\right\|_{\mathbb{S}^{2}}^{2}\right)
\end{aligned}
\end{align}
\begin{align}\label{eq:Z-Zee}
\begin{aligned}
\left\|Z-Z^{\eps\eps}\right\|_{\mathbb{H}^{2}}^{2} \coloneq 
& \mathbb{E}\left[\int^T_0 \left|Z_{t} -Z^{\epsilon\epsilon}_{t}\right|^2 \de t \right] =\mathbb{E}\left[\int^T_0 \left|Z_{t}-Z^{\epsilon}_{t}+Z^{\epsilon}_{t} -Z^{\epsilon\epsilon}_{t}\right|^2\de t\right]\\
&
%\leq 2\left( \left\|Y-Y^{\epsilon}\right\|_{\mathbb{S}^{2}}^{2} + \left\|Y^{\eps}-Y^{\eps\eps}\right\|_{\mathbb{S}^{2}}^{2}\right).
\leq2 \left( \left\|Z-Z^{\eps}\right\|_{\mathbb{H}^{2}}^{2} + \left\|Z^\eps-Z^{\eps\eps}\right\|_{\mathbb{H}^{2}}^{2}\right)
\end{aligned}
\end{align}
\begin{align}\label{eq:U-Uee}
\begin{aligned}
\left\|U-U^{\eps\eps}\right\|_{\mathbb{H}^{2}_{N^\eps}}^{2} \coloneq 
& \mathbb{E}\left[\int^T_0 \int_{\RR^d} \left|U_t (z)-U^{\epsilon\epsilon}_t(z) \right|^2 \nu^\eps(\de z) \de t \right] \\
& = \mathbb{E}\left[\int^T_0 \int_{\RR^d} \left|U_t(z) - U^\eps_t(z) + U^\eps_t(z) - U^{\epsilon\epsilon}_t(z)\right|^2 \nu^\eps(\de z) \de t \right]\\
& \leq 2 \left(\mathbb{E}\left[\int^T_0 \int_{\RR^d} \left|U_t (z)- U^\eps_t(z)\right|^2 \nu(\de z) \de t \right] +  \mathbb{E}\left[\int^T_0 \int_{\RR^{d}} \left|U^\eps_t(z) - U^{\eps\eps}_t(z)\right|^2 \nu^\eps(\de z) \de t \right]\right)\\
& \leq 2\left(\left\|U-U^{\eps}\right\|_{\mathbb{H}^{2}_N}^{2}  +\left\|U^\eps-U^{\eps\eps}\right\|_{\mathbb{H}^{2}_{N^\eps}}^{2}  \right)
\end{aligned}
\end{align}
In what follows we will not keep track of the constants and we will denote them by a generic $C$.
We start by the first term in the right hand side of \eqref{eq:Y-Yee},  \eqref{eq:Z-Zee} and \eqref{eq:U-Uee}. 
For any $(\omega,s, y, z, u)\in \Omega\times [0,T]\times \RR\times \RR^d\times L^2_\nu(\RR^d)$, let us define
$$
F(\omega,s, y, z, u) := f(s, X_{s-}, y, z, \int_{\RR^d} u(z) \nu(\mathrm d z)) 
$$
and
$$
F^\eps(\omega,s, y, z, u) := f(s, X^\epsilon_{s-}, y, z, \int_{\RR^d} u(z) \nu(\mathrm d z)).
$$
Applying the estimates (3.5) and (3.7) in \cite{Delongbook} one has that for any $\rho>0$ there exists some constant $C$ such that 
 \begin{align}
\begin{aligned}
%&\left\|Y-Y^{\epsilon}\right\|_{\mathbb{S}^{2}}^{2} + \|Z- Z^\eps\|^2_{\mathbb H^2} +  \|U- U^\eps\|^2_{\mathbb H^2_N}\\
&\EE\left[\sup_{t\in [0,T]} e^{\rho t} |Y_t-Y^\eps_t|^2\right] + \EE\left[\int^T_0 e^{\rho t} |Z_t-Z^\eps_t|^2 \de t\right] +  \EE\left[\int^T_0 \int_{\RR^d} e^{\rho t} |U_t(z)-U^\eps_t(z)|^2\nu(\de z)\de t \right]\\
& \leq
%C \left(\mathbb{E}\left[e^{\rho T}\left|g(X_T)-g(X^{\epsilon}_T)\right|^{2}\right]\right.\\
%& \quad\left.+\mathbb{E}\left[\int_0^Te^{\rho t}\left|F(t, Y_{t-}, Z_{t}, U_t(\cdot))-F^\eps(t,Y_{t-}, Z_{t},  U_t(\cdot) )\right|^2\de t\right]\right)\\
%&= 
C \left(\mathbb{E}\left[e^{\rho T}\left|g(X_T)-g(X^{\epsilon}_T)\right|^{2}\right]\right.\\
&\quad\left.+\mathbb{E}\left[\int_0^Te^{\rho t}\left|f(t,X_{t-}, Y_{t-}, Z_{t}, \int_{\RR^d} U_t(z) \nu(\mathrm d z))-f(t,X^\epsilon_{t-}, Y_{t-}, Z_{t}, \int_{\RR^d} U_t(z) \nu(\mathrm d z))\right|^2\de t\right]\right).
\end{aligned}
\end{align}
Thanks to the Lipschitz continuity of $g$ and $f$, see (A1) and (A2), and Theorem \ref{teo:est_forward} we then obtain 
 \begin{align*}
& \left\|Y-Y^{\epsilon}\right\|_{\mathbb{S}^{2}}^{2} + \|Z- Z^\eps\|_{\mathbb H^2} +  \|U- U^\eps\|_{\mathbb H^2_N}\\
& \leq \EE\left[\sup_{t\in [0,T]} e^{\rho t}  |Y_t-Y^\eps_t|^2\right| + \EE\left[\int^T_0 e^{\rho t} |Z_t-Z^\eps_t|^2 \de t\right] +  \EE\left[\int^T_0 \int_{\RR^d} e^{\rho t} |U_t(z)-U^\eps_t(z)|^2\nu(\de z)\de t \right]\\
& \leq C \EE\left[ \sup_{t\in [0,T]} \left|X_t - X^\eps_t\right|^2\right]\leq C(1+|x|^2)\int_{\RR^d}|z|^2\nu_\epsilon(\de z).
\end{align*}
Let us now move to the estimates for the terms $\left\|Y^{\eps}-Y^{\eps\eps}\right\|_{\mathbb{S}^{2}}^{2}$, $\left\|Z^{\eps}-Z^{\eps\eps}\right\|_{\mathbb{H}^{2}}^{2}$ and  $\left\|U^{\eps}-U^{\eps\eps}\right\|_{\mathbb{H}^{2}_{N^\eps}}^{2}$ in \eqref{eq:Y-Yee},  \eqref{eq:Z-Zee},  \eqref{eq:U-Uee}, respectively. First of all, let us observe that defining
 $$
\hat U^{\eps\eps}_s(z) := U^{\eps\eps}_s(z) \mathbbm 1_{|z|\geq \eps}
$$
we can write $Y^{\eps\eps}$ as 
\begin{align}
 Y^{\epsilon\epsilon}_{t}    = &  g (X^\epsilon_T) +\int_{t}^{T} f \left(s, X^\epsilon_{s-}, Y^{\epsilon\epsilon}_{s-}, Z^{\epsilon\epsilon}_{s}, \int_{\RR^d} \hat U^{\epsilon\epsilon}_s(z) \nu(\mathrm d z) \right) \mathrm d s- \int_{t}^{T} {Z_{s}^{\epsilon\epsilon}}^\top  \mathrm d W_{s} \nonumber \\
& - \int^T_t \int_{\RR^d} \hat U^{\epsilon\epsilon}_s(z) \tilde N(\mathrm d s, \mathrm d z). \label{eq:backwardepseps2}
\end{align}

We proceed in line with  \cite[Lemma 3.1.1]{Delongbook}.
We apply the Itô's formula to $e^{\rho t}\left|Y^\eps_{t}-Y^{\eps\eps}_{t}\right|^{2}$ and we derive, for any $\tau\in [t,T]$,
\begin{align}
\begin{aligned} 
\label{eq:ItoFormula}
&e^{\rho t}\left|Y^\eps_{t}-Y^{\eps\eps}_{t}\right|^{2}+\rho \int_{t}^{\tau} e^{\rho s}\left|Y^\eps_{s}-Y^{\eps\eps}_{s}\right|^{2}  \mathrm d s+\int_{t}^{\tau} e^{\rho s}\left|Z^\eps_{s}-Z^{\eps\eps}_{s}\right|^{2}  \de s\\
&+\int_{t}^{\tau} \int_{\mathbb{R}^d} e^{\rho s}\left|U^\eps_s( z)-\hat U^{\eps\eps}_s(z)\right|^{2} \nu(\de z)  \de s \\
=& e^{\rho \tau}\left|Y^\eps_{\tau}-Y^{\eps\eps}_{\tau}\right|^{2} \\ 
&-2 \int_{t}^{\tau} e^{\rho s}\left(Y^\eps_{s}-Y^{\epsilon\eps}_{s}\right) \\ 
& \cdot\left(-f(s,X^\eps_{s-}, Y^\eps_{s-}, Z^\eps_{s}, \int_{\RR^d} U^\eps_s(z) \nu(\mathrm d z))+f(s,X^\epsilon_{s-} Y^{\epsilon\eps}_{s-}, Z^{\epsilon\eps}_{s}, \int_{\RR^d} \hat U^{\epsilon\eps}_s(z) \nu(\mathrm d z))\right) \de s \\ 
&-2 \int_{t}^{\tau} e^{\rho s}\left(Y^\eps_{s-}-Y^{\epsilon\eps}_{s-}\right)\left(Z^\eps_{s}-Z^{\epsilon\eps}_{s}\right) \de W_{s} \\ 
&-2 \int_{t}^{\tau} \int_{\mathbb{R}^d} e^{\rho s}\left(Y^\eps_{s-}-Y^{\epsilon\eps}_{s-}\right)\left(U^\eps(s, z)-\hat U^{\epsilon\eps}(s, z)\right) \tilde{N}(\de s, \de z) \\ 
&-\int_{t}^{\tau} \int_{\mathbb{R}^d} e^{\rho s}\left|U^\eps_s( z)-\hat U^{\epsilon\eps}_s( z)\right|^{2} \tilde{N}(\de s, \de z), \quad 0 \leq t \leq \tau\leq T . \end{aligned}
\end{align}
The stochastic integrals appearing above are local martingales. Even more, they are uniformly integrable martingales. Taking the expectation of the inequality above and choosing $\tau=T$ we find 
\begin{align*}
\begin{aligned} 
&e^{\rho t}\EE\left[\left|Y^\eps_{t}-Y^{\eps\eps}_{t}\right|^{2}\right]+\rho \EE\left[ \int_{t}^{T} e^{\rho s}\left|Y^\eps_{s}-Y^{\eps\eps}_{s}\right|^{2}  \mathrm d s\right]+\EE\left[\int_{t}^{T} e^{\rho s}\left|Z^\eps_{s}-Z^{\eps\eps}_{s}\right|^{2}  \de s\right]\\
&+\EE\left[\int_{t}^{T} \int_{\mathbb{R}^d} e^{\rho s}\left|U^\eps_s(z)-\hat U^{\eps\eps}_s(z)\right|^{2} \nu(\de z)  \de s \right]\\ 
= &2 \EE\bigg[\int_{t}^{T} e^{\rho s}\left(Y^\eps_{s}-Y^{\epsilon\eps}_{s}\right) \\
& \cdot\left( f(s,X^\eps_{s-}, Y^\eps_{s-}, Z^\eps_{s}, \int_{\RR^d} U^\eps_s(z) \nu(\mathrm d z)) - f(s,X^\epsilon_{s-} Y^{\epsilon\eps}_{s-}, Z^{\epsilon\eps}_{s}, \int_{\RR^d} \hat U^{\epsilon\eps}_s(z) \nu(\mathrm d z))\right) \de s\bigg], \quad 0 \leq t \leq T . \end{aligned}
\end{align*}
We can use the  following inequality: for any $\alpha>0$.
\begin{align}
\label{eq:productInequality}
2|u v| \leq \frac{1}{\alpha}|u|^{2}+\alpha|v|^{2} .
\end{align}
We apply this to the term involving the difference between the drivers and obtain
\begin{align*}
& 2\left|Y^\eps_{s}-Y^{\epsilon\eps}_{s}\right|\left|f(s,X^\eps_{s-}, Y^\eps_{s-}, Z^\eps_{s}, \int_{\RR^d} U^\eps_s(z) \nu(\mathrm d z))-f(s,X^\epsilon_{s-} Y^{\epsilon\eps}_{s-}, Z^{\epsilon\eps}_{s}, \int_{\RR^d} \hat U^{\epsilon\eps}_s(z) \nu(\mathrm d z))\right|\\
& \leq\alpha\left|Y^\eps_{s}-Y^{\epsilon\eps}_{s}\right|^2+\frac{1}{\alpha}\left|f(s,X^\eps_{s-}, Y^\eps_{s-}, Z^\eps_{s}, \int_{\RR^d} U^\eps_s(z) \nu(\mathrm d z))-f(s,X^\epsilon_{s-} Y^{\epsilon\eps}_{s-}, Z^{\epsilon\eps}_{s}, \int_{\RR^d} \hat U^{\epsilon\eps}_s(z) \nu(\mathrm d z))\right|^2,
\end{align*}
so that  we get 
\begin{align}\label{eq:stima1}
\begin{aligned} 
&e^{\rho t}\EE\left[\left|Y^\eps_{t}-Y^{\eps\eps}_{t}\right|^{2}\right]+\rho \EE\left[ \int_{t}^{T} e^{\rho s}\left|Y^\eps_{s}-Y^{\eps\eps}_{s}\right|^{2}  \mathrm d s\right]+\EE\left[\int_{t}^{T} e^{\rho s}\left|Z^\eps_{s}-Z^{\eps\eps}_{s}\right|^{2}  \de s\right]\\
&\quad +\EE\left[\int_{t}^{T} \int_{\mathbb{R}^d} e^{\rho s}\left|U^\eps_s(z)-\hat U^{\eps\eps}_s(z)\right|^{2} \nu(\de z)  \de s \right]\\ 
& \leq \alpha \EE\left[ \int_{t}^{T} e^{\rho s}\left|Y^\eps_{s}-Y^{\eps\eps}_{s}\right|^{2}  \mathrm d s\right]  \\
& \quad + \frac{1}{\alpha}  \EE\bigg[\int_{t}^{T} e^{\rho s} \left|f(s,X^\epsilon_{s-}, Y^{\epsilon\eps}_{s-}, Z^{\epsilon\eps}_{s}, \int_{\RR^d} \hat U^{\epsilon\eps}_s(z) \nu(\mathrm d z)) -f(s,X^\eps_{s-}, Y^\eps_{s-}, Z^\eps_{s}, \int_{\RR^d} U^\eps_s(z) \nu(\mathrm d z))\right|^2 \de s\bigg]. \end{aligned}
\end{align}
Using the Lipschitz continuity of $f$ one has 
\begin{align*}
& \left|f(s,X^\epsilon_{s-}, Y^{\epsilon}_{s-}, Z^{\epsilon}_{s}, \int_{\RR^d}  U^{\epsilon}_s(z) \nu(\mathrm d z)) -f(s,X^\eps_{s-}, Y^{\eps\eps}_{s-}, Z^{\eps\eps}_{s}, \int_{\RR^d} \hat U^{\eps\eps}_s(z) \nu(\mathrm d z))\right|^2\\
& \leq C \left\{ \left|Y^\eps_{s-}-Y^{\eps\eps}_{s-}\right|^{2} + \left|Z^\eps_{s}-Z^{\eps\eps}_{s}\right|^{2} + \left|\int_{\RR^d} U^{\epsilon}_s(z)-\hat U^{\eps\eps}_s(z)\nu(\mathrm d z)\right|^{2}\right\}.
\end{align*}
Therefore, from \eqref{eq:stima1} one gets
\begin{align}\label{eq:stima2}
\begin{aligned} 
&e^{\rho t}\EE\left[\left|Y^\eps_{t}-Y^{\eps\eps}_{t}\right|^{2}\right]+\rho \EE\left[ \int_{t}^{T} e^{\rho s}\left|Y^\eps_{s}-Y^{\eps\eps}_{s}\right|^{2}  \mathrm d s\right]+\EE\left[\int_{t}^{T} e^{\rho s}\left|Z^\eps_{s}-Z^{\eps\eps}_{s}\right|^{2}  \de s\right]\\
&+\EE\left[\int_{t}^{T} \int_{\mathbb{R}^d} e^{\rho s}\left|U^\eps_s(z)-\hat U^{\eps\eps}_s( z)\right|^{2} \nu(\de z)  \de s \right]\\ 
 & \leq \left(\alpha + \frac{C}{\alpha}\right) \EE\left[ \int_{t}^{T} e^{\rho s}\left|Y^\eps_{s}-Y^{\eps\eps}_{s}\right|^{2}  \mathrm d s\right] +  \frac{C}{\alpha}  \EE\bigg[\int_{t}^{T} e^{\rho s} \left|Z^\eps_{s}-Z^{\eps\eps}_{s}\right|^{2} \de s\bigg] \\
& \quad + \frac{C}{\alpha}\EE\bigg[\int_{t}^{T} e^{\rho s} \left|\int_{\RR^d} (U^{\epsilon}_s(z)-\hat U^{\eps\eps}_s(z))\nu(\mathrm d z)\right|^{2} \de s\bigg]\\
%& \leq \left(\alpha + \frac{C}{\alpha}\right) \EE\left[ \int_{t}^{T} e^{\rho s}\left|Y^\eps_{s}-Y^{\eps\eps}_{s}\right|^{2}  \mathrm d s\right] +  \frac{C}{\alpha}  \EE\bigg[\int_{t}^{T} e^{\rho s} \left|Z^\eps_{s}-Z^{\eps\eps}_{s}\right|^{2} \de s\bigg] \\
%& \quad + \frac{2C}{\alpha}\EE\bigg[\int_{t}^{T} e^{\rho s} \left\{\left|\int_{\RR^d} ( U^{\epsilon}_s(z)-U^{\eps\eps}_s(z))\nu^\eps(\mathrm d z)\right|^2 + \left|\int_{\RR^d} U^{\epsilon}_s(z)\nu_\eps(\mathrm d z)\right|^{2} \right\}\de s\bigg]\\
& \leq \left(\alpha + \frac{C}{\alpha}\right) \EE\left[ \int_{t}^{T} e^{\rho s}\left|Y^\eps_{s}-Y^{\eps\eps}_{s}\right|^{2}  \mathrm d s\right] +  \frac{C}{\alpha}  \EE\bigg[\int_{t}^{T} e^{\rho s} \left|Z^\eps_{s}-Z^{\eps\eps}_{s}\right|^{2} \de s\bigg] \\
& \quad + \frac{2C}{\alpha}\EE\bigg[ \int_{t}^{T} e^{\rho s} \int_{\RR^d} \left|U^{\epsilon}_s(z)-U^{\eps\eps}_s(z)\right|^2 \nu^\eps(\mathrm d z) \de s\bigg] +\frac{2C}{\alpha}\EE\bigg[ \int_{t}^{T} e^{\rho s} \int_{\RR^d} \left|U^{\epsilon}_s(z)\right|^2 \nu_\eps(\mathrm d z) \de s \bigg]\\
\end{aligned}
\end{align}
Observing that 
$$
\EE\left[\int_{t}^{T} \int_{\mathbb{R}} e^{\rho s}\left|U^\eps_s(z)-\hat U^{\eps\eps}_s(z)\right|^{2} \nu(\de z)  \de s \right]\geq \EE\left[\int_{t}^{T} \int_{\mathbb{R}^d} e^{\rho s}\left|U^\eps_s(z)- U^{\eps\eps}_s( z)\right|^{2} \nu^\eps(\de z)  \de s \right]
$$
one obtains
\begin{align}\label{eq:stima3}
\begin{aligned} 
&e^{\rho t}\EE\left[\left|Y^\eps_{t}-Y^{\eps\eps}_{t}\right|^{2}\right]+\rho \EE\left[ \int_{t}^{T} e^{\rho s}\left|Y^\eps_{s}-Y^{\eps\eps}_{s}\right|^{2}  \mathrm d s\right]+\EE\left[\int_{t}^{T} e^{\rho s}\left|Z^\eps_{s}-Z^{\eps\eps}_{s}\right|^{2}  \de s\right]\\
&+\EE\left[\int_{t}^{T} \int_{\mathbb{R}^d} e^{\rho s}\left|U^\eps_s( z)- U^{\eps\eps}_s(z)\right|^{2} \nu^\eps(\de z)  \de s \right]\\ 
&  \leq \left(\alpha + \frac{C}{\alpha}\right) \EE\left[ \int_{t}^{T} e^{\rho s}\left|Y^\eps_{s}-Y^{\eps\eps}_{s}\right|^{2}  \mathrm d s\right] +  \frac{C}{\alpha}  \EE\bigg[\int_{t}^{T} e^{\rho s} \left|Z^\eps_{s}-Z^{\eps\eps}_{s}\right|^{2} \de s\bigg] \\
& \quad + \frac{C}{\alpha}\EE\bigg[ \int_{t}^{T} e^{\rho s} \int_{\RR^d} \left|(U^{\epsilon}_s(z)-U^{\eps\eps}_s(z))\right|^2 \nu^\eps(\mathrm d z) \de s\bigg] +\frac{C}{\alpha}\EE\bigg[ \int_{t}^{T} e^{\rho s} \int_{\RR^d} \left|U^{\epsilon}_s(z)\right|^2 \nu_\eps(\mathrm d z) \de s \bigg]
\end{aligned}
\end{align}
for a generic constant $C$ that majorates all constants appearing in \eqref{eq:stima2}. 
Assuming $\rho$ sufficiently big we can take  $\alpha$ such that $\rho = \alpha + \frac{C}{\alpha}$ and $C<\alpha$ in order to obtain  
$$
\begin{aligned} 
&\EE\left[\int_{t}^{T} e^{\rho s}\left|Z^\eps_{s}-Z^{\eps\eps}_{s}\right|^{2}  \de s\right] +\EE\left[\int_{t}^{T} \int_{\mathbb{R}^d} e^{\rho s}\left|U^\eps_s(z)- U^{\eps\eps}_s(z)\right|^{2} \nu^\eps(\de z)  \de s \right]\\ 
&  \leq C \EE\bigg[ \int_{t}^{T} e^{\rho s} \int_{\RR^d} \left|U^{\epsilon}_s(z)\right|^2 \nu_\eps(\mathrm d z) \de s \bigg].
\end{aligned}
$$
To estimate the right end side of the last inequality we can use the representation of $U^\eps$ given by Theorem \ref{th:FeynmanKac}. Indeed, for the starting  FBSDE \eqref{eq:backward} where we replace the forward component $X$ with $X^{\eps}$, we have   
$$
U^{\eps}(s, z)= u\left(s, {X}^{\eps}_{s-}+\Gamma\left({X}^{\eps}_{s-}, z\right)\right) -u\left(s, {X}^{\eps}_{s-}\right), \quad t \leq s \leq T, z \in \mathbb{R}^d
$$
so that by the Lipschitz continuity of $u$ (see \cite[Lemma 4.1.1]{Delongbook}) and the growth condition on $\Gamma$ the following holds 
\begin{equation}\label{eq:estUeps}
\left|U^{\eps}_s( z)\right|^2= \left| u\left(s, {X}^{\eps}_{s-}+\Gamma\left({X}^{\eps}_{s-}, z\right)\right) -u\left(s, {X}^{\eps}_{s-}\right) \right|^2  \leq C(1+ \left|{X}^{\eps}_{s-}\right|^2)|z|^2.
\end{equation}
In conclusion, thanks {to the moment estimate \eqref{eq:growth}}, we obtain 
\begin{equation}\label{eq:estZU}
\begin{aligned} 
& \| Z^\eps-Z^{\eps\eps}\|^2_{\mathbb H^2} + \| U^\eps-U^{\eps\eps}\|^2_{\mathbb H^2_{N^\eps}}\\
& \leq \EE\left[\int_{0}^{T} e^{\rho s}\left|Z^\eps_{s}-Z^{\eps\eps}_{s}\right|^{2}  \de s\right] +\EE\left[\int_{0}^{T} \int_{\mathbb{R}^d} e^{\rho s}\left|U^\eps_s( z)- U^{\eps\eps}_s( z)\right|^{2} \nu^\eps(\de z)  \de s \right]\\ 
&  \leq Ce^{\rho T} (1+|x|^2) \int_{\RR^d} |z|^2 \nu_\eps(\mathrm d z) 
\end{aligned}
\end{equation}
Again from \eqref{eq:ItoFormula}, taking $\tau=T$, we have the following estimates
\begin{align}
\begin{aligned} \sup _{t \in[0, T]} & e^{\rho t}\left|Y^\eps_t-Y^{\epsilon\eps}_t\right|^{2} \\  
%& \leq \;2 \int_{0}^{T} e^{\rho s}\left|Y^\eps_{s}-Y^{\epsilon\eps}_{s}\right| \\ 
%& \cdot\left|f(s,X^\eps_{s-}, Y^\eps_{s-}, Z^\eps_{s}, \int_{\RR^d} U^\eps_s(z) \nu(\mathrm d z))-f(s,X^\epsilon_{s-} Y^{\epsilon\eps}_{s-}, Z^{\epsilon\eps}_{s}, \int_{\RR^d} \hat U^{\eps\epsilon}_s(z) \nu(\mathrm d z))\right| \de s  \\ 
%&+2 \sup _{t \in[0, T]}\left|\int_{t}^{T} e^{\rho s}\left(Y^\eps_{s-}-Y^{\epsilon\eps}_{s-}\right)\left(Z^\eps_s-Z^{\epsilon\eps}_s\right) \de W_s\right| \\ 
%&+2 \sup _{t \in[0, T]}\left|\int_{t}^{T} \int_{\mathbb{R}^d} e^{\rho s}\left(Y^\eps_{s-}-Y^{\epsilon\eps}_{s-}\right)\left(U^\eps_s( z)-\hat U^{\epsilon\eps}_s( z)\right) \tilde{N}(\de s, \de z)\right| \\  
\leq &\;2 \int_{0}^{T} e^{\rho s}\left|Y^\eps_{s}-Y^{\epsilon\eps}_{s}\right| \\ 
& \cdot\left|f(s,X^\eps_{s-}, Y^\eps_{s-}, Z^\eps_{s}, \int_{\RR^d} U^\eps_s(z) \nu(\mathrm d z))-f(s,X^\epsilon_{s-} Y^{\epsilon\eps}_{s-}, Z^{\epsilon\eps}_{s}, \int_{\RR^d} \hat U^{\epsilon\eps}_s(z) \nu(\mathrm d z))\right| \de s  \\ 
&+4 \sup _{t \in[0, T]}\left|\int_{0}^{t} e^{\rho s}\left(Y^\eps_{s-}-Y^{\epsilon\eps}_{s-}\right)\left(Z^\eps_s-Z^{\epsilon\eps}_s\right) \de W_s\right| \\
 &+4 \sup _{t \in[0, T]}\left|\int_{0}^{t} \int_{\mathbb{R}^d} e^{\rho s}\left(Y^\eps_{s-}-Y^{\epsilon\eps}_{s-}\right)\left(U^\eps_s( z)-\hat U^{\epsilon\eps}_s( z)\right) \tilde{N}(\de s, \de z)\right| . \end{aligned}
\end{align}
Taking the expectation and using  the Burkholder-Davis-Gundy inequality together with other classical estimates we get
\begin{align}
\begin{aligned}
\mathbb{E}&\left[\sup _{t \in[0, T]} e^{\rho t}\left|Y^\eps_t-Y^{\epsilon\eps}_t\right|^{2} \right]\\
&\leq 2\mathbb{E}\left[\int_{0}^{T} e^{\rho s}\left|Y^\eps_{s}-Y^{\eps\epsilon}_{s}\right|\right. \\ 
& \cdot\left.\left|f(s,X^\eps_{s-}, Y^\eps_{s-}, Z^\eps_{s}, \int_{\RR^d} U^\eps(s,z) \nu(\mathrm d z))-f(s,X^\epsilon_{s-} Y^{\epsilon\eps}_{s-}, Z^{\epsilon\eps}_{s}, \int_{\RR^d} \hat U^{\eps\epsilon}(s,z) \nu(\mathrm d z))\right| \de s\right]\\
&+C \mathbb{E}\left[\left(\int_{0}^{T} e^{2\rho s}\left|Y^\eps_{s-}-Y^{\eps\epsilon}_{s-}\right|^2\left|Z^\eps_s-Z^{\eps\epsilon}_s\right|^2 \de s\right)^{\frac{1}{2}} \right]\\
&+C \mathbb{E}\left[\left(\int_{0}^{T} \int_{\mathbb{R}^d} e^{2\rho s}\left|Y^\eps_{s-}-Y^{\eps\epsilon}_{s-}\right|^2\left|U^\eps_s( z)-\hat U^{\eps\epsilon}_s(z)\right|^2 {N}(\de s, \de z)\right)^{\frac{1}{2}} \right]\\
&\leq 2\mathbb{E}\left[\int_{0}^{T} e^{\rho s}\left|Y^\eps_{s}-Y^{\eps\epsilon}_{s}\right|\right. \\ 
& \cdot\left.\left|f(s,X^\eps_{s-}, Y^\eps_{s-}, Z^\eps_{s}, \int_{\RR^d} U^\eps_s(z) \nu(\mathrm d z))-f(s,X^\epsilon_{s-} Y^{\eps\epsilon}_{s-}, Z^{\eps\epsilon}_{s}, \int_{\RR^d} \hat U^{\eps\epsilon}_s(z) \nu(\mathrm d z))\right| \de s\right]\\
&+C \mathbb{E}\left[\sup _{t \in[0, T]} e^{\frac{\rho}{2} t}\left|Y^\eps_{t}-Y^{\eps\epsilon}_{t}\right|\left(\int_{0}^{T} e^{\rho s}\left|Z^\eps_s-Z^{\eps\epsilon}_s\right|^2 \de s\right)^{\frac{1}{2}} \right]\\
&+C \mathbb{E}\left[\sup _{t \in[0, T]} e^{\frac{\rho}{2} t}\left|Y^\eps_{t}-Y^{\eps\epsilon}_{t}\right|\left(\int_{0}^{T} \int_{\mathbb{R}^d}e^{\rho s} \left|U^\eps_s(z)-\hat U^{\eps\epsilon}_s( z)\right|^2 {N}(\de s, \de z)\right)^{\frac{1}{2}} \right]
\end{aligned}
\end{align}
We use again \eqref{eq:productInequality} to majorate  the two final terms.
\begin{align}
\begin{aligned}
\mathbb{E}&\left[\sup _{t \in[0, T]} e^{\rho t}\left|Y^\eps_t-Y^{\eps\epsilon}_t\right|^{2} \right]\\
&\leq 2\mathbb{E}\left[\int_{0}^{T} e^{\rho s}\left|Y^\eps_{s}-Y^{\epsilon\eps}_{s}\right|\right. \\ 
& \cdot\left.\left|f(s,X^\eps_{s-}, Y^\eps_{s-}, Z^\eps_{s}, \int_{\RR^d} U^\eps_s(z) \nu(\mathrm d z))-f(s,X^\epsilon_{s-} Y^{\eps\epsilon}_{s-}, Z^{\eps\epsilon}_{s}, \int_{\RR^d} \hat U^{\eps\epsilon}_s(z) \nu(\mathrm d z))\right| \de s\right]\\
&+\frac{C}{\tilde\alpha}\mathbb{E}\left[\sup _{t \in[0, T]} e^{\rho t}\left|Y^\eps_t-Y^{\eps\epsilon}_t\right|^{2} \right]\\
&+C\tilde\alpha\mathbb{E}\left[\int_{0}^{T} e^{\rho s}\left|Z^\eps_s-Z^{\eps\epsilon}_s\right|^2 \de s\right]\\
&+C\tilde\alpha \mathbb{E}\left[\int_{0}^{T} \int_{\mathbb{R}^d}e^{\rho s} \left|U^\eps_s(z)-\hat U^{\eps\epsilon}_s( z)\right|^2 {N}(\de s, \de z)\right]
\end{aligned}
\end{align}
In the last expectation we can substitute the random measure with its compensator hence
\begin{align}
\begin{aligned}
\mathbb{E}&\left[\sup _{t \in[0, T]} e^{\rho t}\left|Y^\eps_t-Y^{\eps\epsilon}_t\right|^{2} \right]\\
&\leq 2\mathbb{E}\left[\int_{0}^{T} e^{\rho s}\left|Y^\eps_{s}-Y^{\eps\epsilon}_{s}\right|\right. \\ 
& \cdot\left.\left|f(s,X^\eps_{s-}, Y^\eps_{s-}, Z^\eps_{s}, \int_{\RR^d} U^\eps_s(z) \nu(\mathrm d z))-f(s,X^\epsilon_{s-} Y^{\eps\epsilon}_{s-}, Z^{\eps\epsilon}_{s}, \int_{\RR^d} \hat U^{\eps\epsilon}_s(z) \nu(\mathrm d z))\right| \de s\right]\\
&+\frac{C}{\tilde\alpha}\mathbb{E}\left[\sup _{t \in[0, T]} e^{\rho t}\left|Y^\eps_t-Y^{\eps\epsilon}_t\right|^{2} \right]\\
&+C\tilde\alpha\mathbb{E}\left[\int_{0}^{T} e^{\rho s}\left|Z^\eps_s-Z^{\eps\epsilon}_s\right|^2 \de s\right]\\
&+C\tilde\alpha \mathbb{E}\left[\int_{0}^{T} \int_{\mathbb{R}^d}e^{\rho s} \left|U^\eps_s( z)-\hat U^{\eps\epsilon}_s( z)\right|^2 \nu(\de z)\de s\right]
\end{aligned}
\end{align}
We choose $\tilde\alpha>C$, then we have
\begin{align}
\begin{aligned}
\mathbb{E}&\left[\sup _{t \in[0, T]} e^{\rho t}\left|Y^\eps_t-Y^{\eps\epsilon}_t\right|^{2} \right]\\
&\quad \leq 2C\mathbb{E}\left[\int_{0}^{T} e^{\rho s}\left|Y^\eps_{s}-Y^{\eps\epsilon}_{s}\right|\right. \\ 
& \cdot\left.\left|f(s,X^\eps_{s-}, Y^\eps_{s-}, Z^\eps_{s}, \int_{\RR^d} U^\eps_s(z) \nu(\mathrm d z))-f(s,X^\epsilon_{s-} Y^{\eps\epsilon}_{s-}, Z^{\eps\epsilon}_{s}, \int_{\RR^d} \hat U^{\eps\epsilon}_s(z) \nu(\mathrm d z))\right| \de s\right]\\
&+C\left(\mathbb{E}\left[\int_{0}^{T} e^{\rho s}\left|Z^\eps_s-Z^{\eps\epsilon}_s\right|^2 \de s\right] +  \mathbb{E}\left[\int_{0}^{T} \int_{\mathbb{R}^d}e^{\rho s} \left|U^\eps_s(z)-\hat U^{\eps\epsilon}_s(z)\right|^2 \nu(\de z)\de s\right]\right)
\end{aligned}
\end{align}
Applying again \eqref{eq:productInequality}, for some $\hat \alpha>0$, 
\begin{align*}
& 2 \mathbb{E}\left[\int_{0}^{T} e^{\rho s}\left|Y^\eps_{s}-Y^{\eps\epsilon}_{s}\right|\right. \\ 
&\quad  \cdot\left.\left|f(s,X^\eps_{s-}, Y^\eps_{s-}, Z^\eps_{s}, \int_{\RR^d} U^\eps_s(z) \nu(\mathrm d z))-f(s,X^\epsilon_{s-} Y^{\eps\epsilon}_{s-}, Z^{\eps\epsilon}_{s}, \int_{\RR^d} \hat U^{\eps\epsilon}_s(z) \nu(\mathrm d z))\right| \de s\right]\\
& \leq  \hat \alpha \mathbb{E}\left[\int_{0}^{T} e^{\rho s}\left|Y^\eps_{s}-Y^{\eps\epsilon}_{s}\right|^2\de s\right] \\
& \quad + \frac{1}{\hat\alpha}  \mathbb{E}\left[\int_{0}^{T} e^{\rho s}\left|f(s,X^\eps_{s-}, Y^\eps_{s-}, Z^\eps_{s}, \int_{\RR^d} U^\eps_s(z) \nu(\mathrm d z))-f(s,X^\epsilon_{s-} Y^{\eps\epsilon}_{s-}, Z^{\eps\epsilon}_{s}, \int_{\RR^d} \hat U^{\eps\epsilon}_s(z) \nu(\mathrm d z))\right|^2\de s\right] \\
%& \leq  \hat \alpha \mathbb{E}\left[\int_{0}^{T} e^{\rho s}\left|Y^\eps_{s}-Y^{\eps\epsilon}_{s}\right|^2\de s\right] \\
%& \quad + \frac{C}{\hat\alpha}  \mathbb{E}\left[\int_{0}^{T} e^{\rho s} |Y^\eps_{s-} - Y^{\eps\epsilon}_{s-}|^2 + e^{\rho s}|Z^\eps_{s} - Z^{\eps\epsilon}_{s}|^2 + e^{\rho s} |\int_{\RR^d} U^\eps_s(z) -\hat U^{\eps\epsilon}_s(z) \nu(\mathrm d z) |^2\de s\right] \\
& \leq  \left(\hat \alpha +  \frac{C}{\hat\alpha}\right) \mathbb{E}\left[\int_{0}^{T} e^{\rho s}\left|Y^\eps_{s}-Y^{\eps\epsilon}_{s}\right|^2\de s\right] \\
& \quad + \frac{C}{\hat\alpha}  \mathbb{E}\left[\int_{0}^{T} e^{\rho s}|Z^\eps_{s} - Z^{\eps\epsilon}_{s}|^2 \de s\right]+ \frac{C}{\hat\alpha}\mathbb E\left[ \int^T_0  \int_{\RR^d}e^{\rho s} |U^\eps_s(z) -\hat U^{\eps\epsilon}_s(z)  |^2\nu(\mathrm d z)\de s\right] 
\end{align*}
Therefore,  we get 
\begin{align*}
\begin{aligned}
\mathbb{E}&\left[\sup _{t \in[0, T]} e^{\rho t}\left|Y^\eps_t-Y^{\eps\epsilon}_t\right|^{2} \right]\\
& \leq C \mathbb{E}\left[\int_{0}^{T} e^{\rho s}\left|Y^\eps_{s}-Y^{\eps\epsilon}_{s}\right|^2\de s\right] + C \mathbb{E}\left[\int_{0}^{T} e^{\rho s}|Z^\eps_{s} - Z^{\eps\epsilon}_{s}|^2 \de s\right]+ C\mathbb E\left[ \int^T_0  \int_{\RR^d}e^{\rho s} |U^\eps_s(z) -\hat U^{\eps\epsilon}_s(z)  |^2\nu(\mathrm d z)\de s\right] 
\end{aligned}
\end{align*}
For controlling the last to terms we use  \eqref{eq:estUeps}-\eqref{eq:estZU} getting
\begin{align*}
\begin{aligned}
& \mathbb{E}\left[\int_{0}^{T} e^{\rho s}\left|Z^\eps_s-Z^{\eps\epsilon}_s\right|^2 \de s\right] +  \mathbb{E}\left[\int_{0}^{T} \int_{\mathbb{R}^d}e^{\rho s} \left|U^\eps_s(z)-\hat U^{\eps\epsilon}_s( z)\right|^2 \nu(\de z)\de s\right]\\
& \leq \mathbb{E}\left[\int_{0}^{T} e^{\rho s}\left|Z^\eps_s-Z^{\eps\epsilon}_s\right|^2 \de s\right] \\
& \quad +  \mathbb{E}\left[\int_{0}^{T} \int_{\mathbb{R}^d}e^{\rho s} \left|U^\eps_s( z)-U^{\eps\epsilon}_s( z)\right|^2 \nu^\eps(\de z)\de s\right] + \mathbb{E}\left[\int_{0}^{T} \int_{\mathbb{R}^d}e^{\rho s} \left|U^\eps_s(z)\right|^2 \nu_\eps(\de z)\de s\right] \\
& \leq C e^{\rho T} (1+|x|^2) \int_{\RR^d} |z|^2 \nu_\eps (\de z). 
\end{aligned}
\end{align*}
Then, one has
\begin{align*}
\mathbb{E}\left[\sup _{t \in[0, T]} e^{\rho t}\left|Y^\eps_t-Y^{\eps\epsilon}_t\right|^{2} \right]\leq C \mathbb{E}\left[\int_{0}^{T} e^{\rho s}\left|Y^\eps_{s}-Y^{\eps\epsilon}_{s}\right|^2\de s\right] + C (1+|x|^2) \int_{\RR^d} |z|^2 \nu_\eps (\de z)
\end{align*}
and applying Gronwall's inequality we obtain 
$$
\|Y^\eps-Y^{\eps\epsilon}\|^2_{\mathbb S^2} \leq  \mathbb{E}\left[\sup _{t \in[0, T]} e^{\rho t}\left|Y^\eps_t-Y^{\eps\epsilon}_t\right|^{2} \right]\leq C (1+|x|^2) \int_{\RR^d} |z|^2 \nu_\eps (\de z)
$$
that together with \eqref{eq:estZU} gives the desired result.
\end{proof}
{
\subsection{On error estimates for the Deep BSDEs solver with jumps}
In order to obtain complete  estimates of the error between the numerical solution provided by our solver and the exact solution $(Y,Z,U)$ of the BSDE \eqref{eq:backward} one should take into account, on top of the error introduced by the approximation of small jumps and estimated in sections \ref{sec:errorforward} and \ref{sec:errorBack}, also the error associated to the ANN approximation  and to the optimization procedure.

Indeed, if we denote by $(\hat Y, \hat{\mathcal Z}, \hat{\mathcal C})$  the output provided by the solver, 
%extended to the whole interval $[0,T]$ by piecewise constant interpolation, 
i.e. given  $\hat\Theta\in \RR^R$ the optimal set of parameters resulting from the solution of the optimization of \eqref{eq:SOCP} where $\sigma$ and $\nu$ are replaced by $\sigma_\eps:=\sigma + \sqrt{\Sigma_\eps}^\top \gamma$ and $\nu^\eps$, respectively, we take 
for $n=0,\ldots, M$, $\hat Y_n  = \tilde Y^{\hat\Theta}_n$, $\hat{\mathcal Z}_n  = \mathcal{Z}^{\hat\Theta}_n$ and $\hat {\mathcal C}_n = {\mathcal C}^{\hat \Theta}_n$,
we clearly have 
\begin{align*}
& \underset{n=0,\ldots, M-1}\max\EE\left[\underset{t\in [t_n,t_{n+1}]}\sup |Y_t-\hat Y_n|^2\right] + \sum^{M-1}_{n=0}\EE\left[\int^{t_{n+1}}_{t_n}|Z_t- \hat{\mathcal Z}_n|^2\de t\right] \\
& \quad +  \sum^{M-1}_{n=0}\EE\left[\left|\int^{t_{n+1}}_{t_n}\int_{\RR^d} U_t(z)\nu^\eps(\de z)\de t - \hat{\mathcal C}_n \Delta t\right|^2\right]\\
& \leq C\left(\left\|Y-Y^{\eps\eps}\right\|_{\mathbb{S}^{2}}^{2} + \|Z- Z^{\eps\eps}\|^2_{\mathbb H^2} +  \|U- U^{\eps\eps}\|^2_{\mathbb H^2_{N^\eps}}\right)\\
& \quad+ 2 \bigg( \underset{n=0,\ldots, M-1}\max\EE\left[\underset{t\in [t_n,t_{n+1}]}\sup |Y^{\eps\eps}_t-\hat Y_n|^2\right] + \sum^{M-1}_{n=0}\EE\left[\int^{t_{n+1}}_{t_n}|Z^{\eps\eps}_t- \hat {\mathcal Z}_n|^2\de t\right]\\
& \quad+  \sum^{M-1}_{n=0}\EE\left[\left|\int^{t_{n+1}}_{t_n}\int_{\RR^d} U^{\eps\eps}_t(z)\nu^\eps(\de z)\de t - \hat{\mathcal C}_n \Delta t\right|^2\right]\bigg)
%& \leq C(1+|x|^2)\int_{\mathbb R^d} |z|^2\nu_\eps(\de z) + 2\left(\left\|Y^{\eps\eps} - Y^*\right\|^2_{\mathbb{S}^{2}} + \|Z^{\eps\eps}-Z^*\|^2_{\mathbb H^2} +  \|U^{\eps\eps}-U^*\|^2_{\mathbb H^2_N}\right)
\end{align*}

The terms in the first brackets of the right hand side are bounded by Theorem \ref{teo:errorback}. We are currently working to establish some bounds for the terms in the second brackets. We conjecture that results in this direction can be obtained by extending  the techniques in \cite{HanLon18} to the case with jumps.}

\color{black}
\section{Numerical results}\label{sec:numerics}
To evaluate the performance of our algorithm, we present four examples where we compare the results either from a theoretical perspective or through Monte Carlo simulations. { All Monte Carlo simulations, used to generate reference solutions are based on $2^{16}$ sample paths. Moreover, the weight in \eqref{eq:SOCP} is set to $K=\frac{T}{M}\times0.1$, for all experiments. Here we recall that $M+1$ is the number of temporal grid points.} The code for our experiments is available at \url{https://github.com/AlessandroGnoatto/DeepBsdeSolverWithJumps}. In the context of applying our methodology to option pricing, we impose additional assumptions on the L\'evy measure to ensure the integrability of the underlying asset. Specifically, we require the driving L\'evy process to have finite exponential moments.

\begin{itemize}
\item[(A6)] There exists a constant $\cM>1$ such that 
\begin{align}
\int_{\RR^d}e^{u^\top z}\nu(\de z)<\infty,
\end{align}
for all $u\in[-\cM,\cM]^d$.
\end{itemize}

\subsection{Pure jump expectation}
Consider the following pure jump process
\begin{equation}\label{eq:pureJumpDif}
    \frac{\de X_t}{X_{t-}} = \int_{\mathbb{R}}(e^z-1)\tilde N(\de z, \de t)\; ,
\end{equation}
where $\nu(\de z) = \lambda \varphi(z)\de z$ is the Lévy measure with $\lambda>0$ and $\varphi(z)=\frac{1}{\sqrt{2\pi}\sigma_J} e^{-\frac{1}{2}\left(\frac{z-\mu_J}{\sigma_J}\right)^2}$.

% This process corresponds to the SDE associated to the class of exponential Lévy models.
%We apply the Itô's formula to $Y_t=\ln X_t$ and we derive
%
%\begin{align}
%\begin{aligned} \label{eq:ItoPureJump}
%\de Y_t & = \frac{1}{X_{t-}}\de X_t + \int_{\mathbb{R}} \ln (X_t+\Delta X_t)-\ln X_t-\Delta X_t \frac{1}{X_{t-}} N(\de \Delta X_t, \de t) \quad , \quad \Delta X_t = (e^z-1)X_t\\
%& = \frac{1}{X_{t-}}X_{t-} \int_{\mathbb{R}}(e^z-1)(N-\nu)(\de z, \de t) +  \int_{\mathbb{R}} \ln \frac{X_t+(e^z-1)X_t}{X_t}- (e^z-1)X_t \frac{1}{X_t} N(\de \Delta X_t, \de t)\\
%& = \int_{\mathbb{R}}(e^z-1)(N-\nu)(\de z, \de t) - \int_{\mathbb{R}}(e^z-1-z)N(\de z, \de t)\\
%& = \int_{\mathbb{R}}(e^z-1)(N-\nu)(\de z, \de t) - \int_{\mathbb{R}}(e^z-1-z)(N-\nu)(\de z, \de t)- \int_{\mathbb{R}}(e^z-1-z)\nu(\de z, \de t)\\
%& = - \int_{\mathbb{R}}(e^z-1-z)\nu(\de z, \de t)+ \int_{\mathbb{R}}z(N-\nu)(\de z, \de t)
%\end{aligned}
%\end{align}
%
%Hence,
The solution to \eqref{eq:pureJumpDif} satisfies
\begin{equation}
    X_T = X_t \exp\left\lbrace -\int_{\mathbb{R}}(e^z-1-z)\nu(\de z) (T-t)+ \int_t^T\int_{\mathbb{R}}z\tilde N(\de z, \de s)\right\rbrace \; .
\end{equation}

% Now we can compute t
The characteristic function of $\ln X_T$ is
\begin{equation}\label{eq:exp}
\mathbb{E}\left[e^{iq\ln X_T}|\mathcal{F}_t \right] = \exp\left\lbrace iq\ln X_t -iq(T-t)\psi(-i) + (T-t)\psi(q)\right\rbrace \; ,
\end{equation}
where $\psi(q)\coloneqq \lambda\left(e^{iu\mu_J -\frac{1}{2}q^2\sigma_J^2} -1 \right)$ is the characteristic exponent.

Thanks to the Markov property we can write
\begin{equation}
    \mathbb{E}\left[e^{iq\ln X_T}|\mathcal{F}_t \right] = u(t,X_t) \; ,
\end{equation}
%and applying the Itô's formula we obtain
%\begin{align}
%\begin{aligned}
%\de f(t,X_t) & = \frac{\partial f}{\partial t}\de t + \frac{\partial f}{\partial X_t}\de X_t + \int_{\mathbb{R}} f(t,X_{t^-}+\Delta X_{t})-f(t,X_{t^-}) -\Delta X_t\frac{\partial f}{\partial X_t}\Big|_{X_t=X_{t^-}} N(\de \Delta X_t, \de t)\\
%    & = \frac{\partial f}{\partial t}\de t + \frac{\partial f}{\partial X_t} {X_{t-}}\int_{\mathbb{R}}(e^z-1)(N-\nu)(\de z, \de t) 
%    + \int_{\mathbb{R}} f(t,X_{t^-}+X_{t^-}(e^z-1))-f(t,X_{t^-})\\ & \qquad\qquad\qquad\qquad\qquad\qquad\qquad\qquad\qquad\qquad\quad  -X_{t^-}(e^z-1)\frac{\partial f}{\partial X_t}\Big|_{X_t=X_{t^-}} N(\de \Delta X_t, \de t)\\
%\end{aligned}
%\end{align}
%Hence
%\begin{align}
%\begin{aligned}
%f(T,X_T)-f(t,X_t) & = \int_t^T \frac{\partial f}{\partial s}\de s + \int_t^T \frac{\partial f}{\partial X_{t}}X_{t} \int_{\mathbb{R}}(e^z-1)(N-\nu)(\de z, \de t)\\ 
%& \qquad + \int_t^T\int_{\mathbb{R}} f(t,X_{t^-}+X_{t^-}(e^z-1))-f(t,X_{t^-})  -X_{t^-}(e^z-1)\frac{\partial f}{\partial X_t} (N-\nu)(\de z, \de t)\\
%& \qquad + \int_t^T\int_{\mathbb{R}} f(t,X_{t^-}+X_{t^-}(e^z-1))-f(t,X_{t^-})  -X_{t^-}(e^z-1)\frac{\partial f}{\partial X_t} \nu(\de z) \de s)\\
%\end{aligned}
%\end{align}
%
%Finally the PIDE is
where $u$ solves the following PIDE: 
\begin{equation}\label{eq:pidePureJump}
\begin{cases}
u_t(t,x) + \int_{\mathbb{R}} u(t,xe^z)-u(t,x)  -x(e^z-1)u_x(t,x) \nu(\de z) = 0\\
u(T,x) = e^{iq\ln x} \; .
\end{cases}
\end{equation}

%From computations above we have
%\begin{align}
%\begin{aligned}
%\de f(t,X_t) & = \frac{\partial f}{\partial t}\de t + \int_{\mathbb{R}} f(t,X_{t^-}e^z)-f(t,X_{t^-})(N-\nu)(\de z, \de t)\\
%& \qquad\qquad + \int_{\mathbb{R}} f(t,X_{t^-}e^z)-f(t,X_{t^-})  -X_{t^-}(e^z-1)\frac{\partial f}{\partial X_t} \nu(\de z)
%\end{aligned}
%\end{align}
%
%Since $f$ satisfy the PIDE \eqref{eq:pidePureJump}
%\begin{equation}
%\frac{\partial f}{\partial t}\de t + \int_{\mathbb{R}} f(t,X_{t^-}e^z)-f(t,X_{t^-})  -X_{t^-}(e^z-1)\frac{\partial f}{\partial X_t} \nu(\de z)  = 0 . 
%\end{equation}

Therefore, the related BSDE is
\begin{equation}
\begin{cases}
-\de Y_s = - \int_{\mathbb{R}} U_s(z) \tilde N(\de z, \de s)\\
Y_T = e^{iq\ln X_T} \; ,
\end{cases}
\end{equation}
where, by the Feynman-Kac representation formula, $Y_t=u(t,X_t)$ and $U_t(z)=u(t,X_{t^-}e^z)-u(t,X_{t^-})$.

Assuming that $q=-i$ in Equation \eqref{eq:exp}, then 
\begin{equation}
Y_t=\mathbb{E}\left[e^{iq\ln X_T}|\mathcal{F}_t \right] = X_t \; , \; \forall t\in [0,T] \; .
\end{equation}
{
In this first example we set  $X_0=1$, $\lambda=0.3$, $\mu_J=0.5$, $\sigma_J=0.25$. The ANNs have $\mathcal L-1=2$ hidden layers, each with $\upsilon=10$ nodes. We  consider $M = 40$ time steps and perform $10,000$ batch iterations with batch size $2^{10}=1024$, \textit{i.e.,} each batch consists of $2^{10}$ samples and the algorithm runs through $10,000$ batches in total. The high number of paths is used to ensure that the algorithm has converged, but we can empirically observe that we reach a high level of accuracy with significantly fewer training samples. The results are reported in Table \ref{tab:ExpRes} and in Figure \ref{fig:ExpRes}.

\begin{table}[htbp]	
\caption{Numerical results for the pure jump expectation example}
\centering
%\relsize{-0.7}
\begin{tabular}{rccc}
\toprule
Step	&     $\widetilde{Y}_0^\Theta\ (Y_0=1)$	&$\frac{T}{M}\sum_{n=1}^{M-1}\mathbbm{E}|Y_{t_n}-\widetilde{Y}^\Theta_{n}|^2$	& Time (s)\\
\midrule
\midrule
100		& 0.5778 & 7.904e-2	&	96	\\
500		& 0.800 & 1.808e-2	&	159	\\
1000	& 0.955 & 5.232e-4	&	228	\\
2000	& 1.000 & 8.750e-5	&	542	\\
4000	& 0.997 & 2.709e-5&	873	\\
6000	& 1.000 & 4.572e-5	&	1183	\\
8000	& 1.000 & 4.417e-5	&	1496	\\
10000	& 1.000	& 1.19e-5 &	 1617	\\
\bottomrule
\end{tabular}
\label{tab:ExpRes}
\end{table} 

\iffalse
\begin{table}[htbp]	
\caption{Numerical results for the pure jump expectation example}
\centering
%\relsize{-0.7}
\begin{tabular}{rccccc}
\toprule
Step	&     $\widetilde{Y}_0$	&$\mathbbm{E}|\widetilde{Y}_N^\Theta-g(\widetilde{X}_N)|^2$	 & $\frac{1}{M}\sum_{n=0}^{M-1}\mathbb{E}\big[\Delta \mathcal{U}_n^\Theta - \mathcal{C}_n^\Theta\Delta t\big]^2$	&$\|Y-Y^\Theta\|$	& Time (s)\\
\midrule
\midrule
100		& 0.5778 & 1.328e-3	& 6.655e-3	& 7.904e-2	&	96	\\
500		& 0.800 & 9.675e-5	& 1.363e-3	& 1.808e-2	&	159	\\
1000	& 0.955 & 2.285e-5	& 4.096e-5	& 5.232e-4	&	228	\\
2000	& 1.000 & 2.019e-5	& 6.762e-6	&8.750e-5	&	542	\\
4000	& 0.997 & 1.957e-5& 3.879e-6	& 2.709e-5&	873	\\
6000	& 1.000 & 1.44e-5	& 4.055e-6	& 4.572e-5	&	1183	\\
8000	& 1.000 & 1.19e-5 &	4.92e-6 & 4.417e-5	&	1496	\\
10000	& 1.000	& 1.30e-5 &4.20e-6& 1.19e-5 &	 1617	\\
\bottomrule
\end{tabular}
\label{tab:ExpRes}
\end{table} 
\fi

\begin{figure}[htp]
\centering
\begin{tabular}{cc}
          \includegraphics[width=70mm]{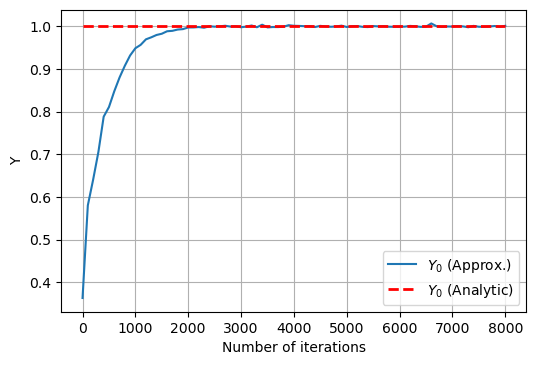}&    
          \includegraphics[width=70mm]{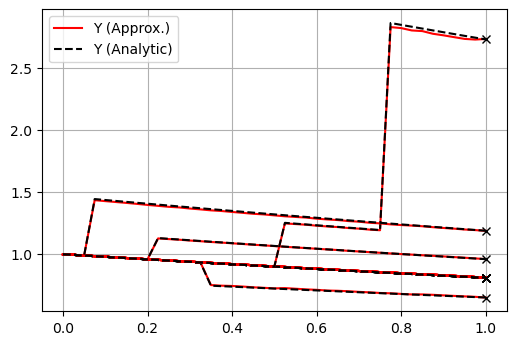}
\end{tabular}
\caption{{\textbf{Left:} Approximate initial value of the BSDE against the number of training steps (the number of batch iterations). \textbf{Right:} 20 representative paths of our approximations compared to the analytical solutions. Note that only four paths contain at least one jump. }}\label{fig:ExpRes}
\end{figure}
}
\subsection{Call option example}\label{sect:calloption}
Let $X$ denote the price of an underlying asset under the risk-neutral probability measure $\mathbb{Q}$ described by the following dynamics
\begin{equation}\label{eq:callDiff}
    \frac{\de X_t}{X_{t-}} = r\de t +\sigma \de W^\mathbb{Q}_t + \int_{\mathbb{R}}(e^z-1)\tilde N(\de z, \de t)\; , \quad  X_0 = x_0 \in \mathbb{R} ,
\end{equation}
where $r\in\mathbb{R}$, $\sigma\in\mathbb{R^+}$ and $\nu(\de z) = \lambda \varphi(z)\de z$ is the Lévy measure with $\lambda>0$ and $\varphi(z)=\frac{1}{\sqrt{2\pi}\sigma_J} e^{-\frac{1}{2}\left(\frac{z-\mu_J}{\sigma_J}\right)^2}$.

For $t,s\in[0,T]$ with $t\geq s$, the solution to \eqref{eq:callDiff} is given by:
\begin{align*}
    X_t &= X_s \exp\left\lbrace \left(r-\frac{\sigma^2}{2} \right)(t-s)+\sigma (W_t^\mathbb{Q}-W_s^\mathbb{Q})   -\int_{\mathbb{R}}(e^z-1-z)\nu(\de z) (t-s)+ \int_s^t\int_{\mathbb{R}}z\tilde N(\de z, \de u)\right\rbrace \; .
\end{align*}

Let $Y$ be a European call option on $X$ with value at time $t\in [0,T]$ given by 
\begin{equation*}
Y_t = \mathbb{E}^\mathbb{Q}\left[e^{-r(T-t)}(X_T-k)^+ |\cF_t \right]\; ,
\end{equation*}
where $k\in\mathbb{R^+}$ is the strike price and $(x)^+=\max(x,0)$. Thanks to the Markov property of the process $X$ we can write $Y_t = u(t,X_t)$, where $u$ solves the following PIDE: 
\begin{equation*}
\begin{cases}
u_t(t,x) + u_x(t,x)rx + \frac{1}{2}\sigma^2x^2u_{xx}(t,x)   + \int_{\mathbb{R}} u(t,xe^z)-u(t,x)   -x(e^z-1)u_x(t,x) \nu(\de z) - ru(t,x) = 0\\
u(T,x) = (x-k)^+ \; .
\end{cases}
\end{equation*}

Therefore, the related BSDE is
\begin{equation*}
\begin{cases}
-\de Y_s = -rY_s \de s - Z_s \de W_s^\mathbb{Q} - \int_{\mathbb{R}} U(s,z) \tilde N(\de z, \de s)\\
Y_T = (X_T-k)^+ \; ,
\end{cases}
\end{equation*}
where, from the Feynman-Kac representation formula we also have $Z_t = u_x(t,X_{t^-})\sigma X_{t^-}$ and $U_t(z)=u(t,X_{t^-}e^z)-u(t,X_{t^-})$.

 We aim to accurately approximate the paths of the option price by using the deep solver and compare them with the semi-analytical solution presented in \cite{merton1976option}. {We set  $X_0=1$, $k=0.9$, $r=0.05$, $\sigma=0.25$, $\lambda=0.3$, $\mu_J=0.5$, $\sigma_J=0.25$. For the numerical scheme, we use $M=40$ time steps and $10,000$ batch iterations with batch size $2^{10}=1024$. We use ANNs with $2$ hidden layers each one with $\upsilon = 10$ nodes. The numerical results are reported in Table \ref{tab:CallRes} and Figure \ref{fig:CallRes}. As a reference, the semi-analytic initial value is given by $Y_0=0.2273$.

Figure \ref{fig:CallRes} provides a pathwise comparison of the semi-analytical solution (dotted line) against the approximation provided by the solver. On a path by path basis, the solver provides a good approximation. We remark that the quality decreases when the semi-analytical price is close to zero: in such cases we see that the path predicted by the solver can become negative. This is a common issue which typically arises in deep BSDE methods which relies on an Euler--Maruyama approximation of the $Y-$component. The problem can become more pronounced in the presence of jumps, as large negative jumps can cause the approximate solution to jump into negative territory.}

\begin{table}[htbp]	
\caption{{Numerical results for the single asset call option example.}}
\centering
%\relsize{-0.7}
\begin{tabular}{rccc}
\toprule
Step	&     $\widetilde{Y}_0^\Theta\ (Y_0=0.2273)$		&$\frac{T}{M}\sum_{n=1}^{M-1}\mathbbm{E}|Y_{t_n}-\widetilde{Y}^\Theta_{n}|^2$	& Time (s)\\
\midrule
\midrule
100		& 0.141 & 9.10e-2 	&	93	\\
500		& 0.189 &  4.47e-3 	&	153	\\
1000	& 0.222 & 4.49e-3 	& 		215	\\
2000	& 0.231  &  4.47e-3	&	352 \\
4000	& 0.228 & 4.36e-3 &	632 \\
6000	& 0.227 &  4.05e-3 & 1111	\\
8000	& 0.228 &  7.62e-3	& 1386	\\
10000	&  0.227 & 4.21e-3  & 1664	\\
\bottomrule
\end{tabular}
\label{tab:CallRes}
\end{table} 

\iffalse
\begin{table}[htbp]	
\caption{Numerical results for the pure jump expectation example}
\centering
%\relsize{-0.7}
\begin{tabular}{rccccc}
\toprule
Step	&     $\widetilde{Y}_0$	&$\mathbbm{E}|\widetilde{Y}_N^\Theta-g(\widetilde{X}_N)|^2$	 & $\frac{1}{M}\sum_{n=0}^{M-1}\mathbb{E}\big[\Delta \mathcal{U}_n^\Theta - \mathcal{C}_n^\Theta\Delta t\big]^2$	&$\frac{T}{M}\sum_{n=1}^{M-1}\mathbbm{E}|Y_{t_n}-\widetilde{Y}^\Theta_{n}|^2$	& Time (s)\\
\midrule
\midrule
100		& 0.141 & 2.18e-1	& 2.39e-3	& 9.10e-2 	&	93	\\
500		& 0.189 & 7.39e-4 	& 1.57e-5	& 4.47e-3 	&	153	\\
1000	& 0.222 &7.01e-4  & 1.14e-5	& 4.49e-3 	& 		215	\\
2000	& 0.231  & 1.11e-3	& 3.19e-5	& 4.47e-3	&	352 \\
4000	& 0.228 &6.81e-4 & 4.79e-6	& 4.36e-3 &	632 \\
6000	& 0.227 & 6.86e-4	& 8.36e-6 & 4.05e-3 & 1111	\\
8000	& 0.228 & 7.61e-4 & 6.00e-6 & 7.62e-3	& 1386	\\
10000	&  0.227 & 5.79e-4 & 3.57e-6 & 4.21e-3  & 1664	\\
\bottomrule
\end{tabular}
\label{tab:CallRes}
\end{table} 
\fi

\begin{figure}[htp]
\centering
\begin{tabular}{cc}
          \includegraphics[width=70mm]{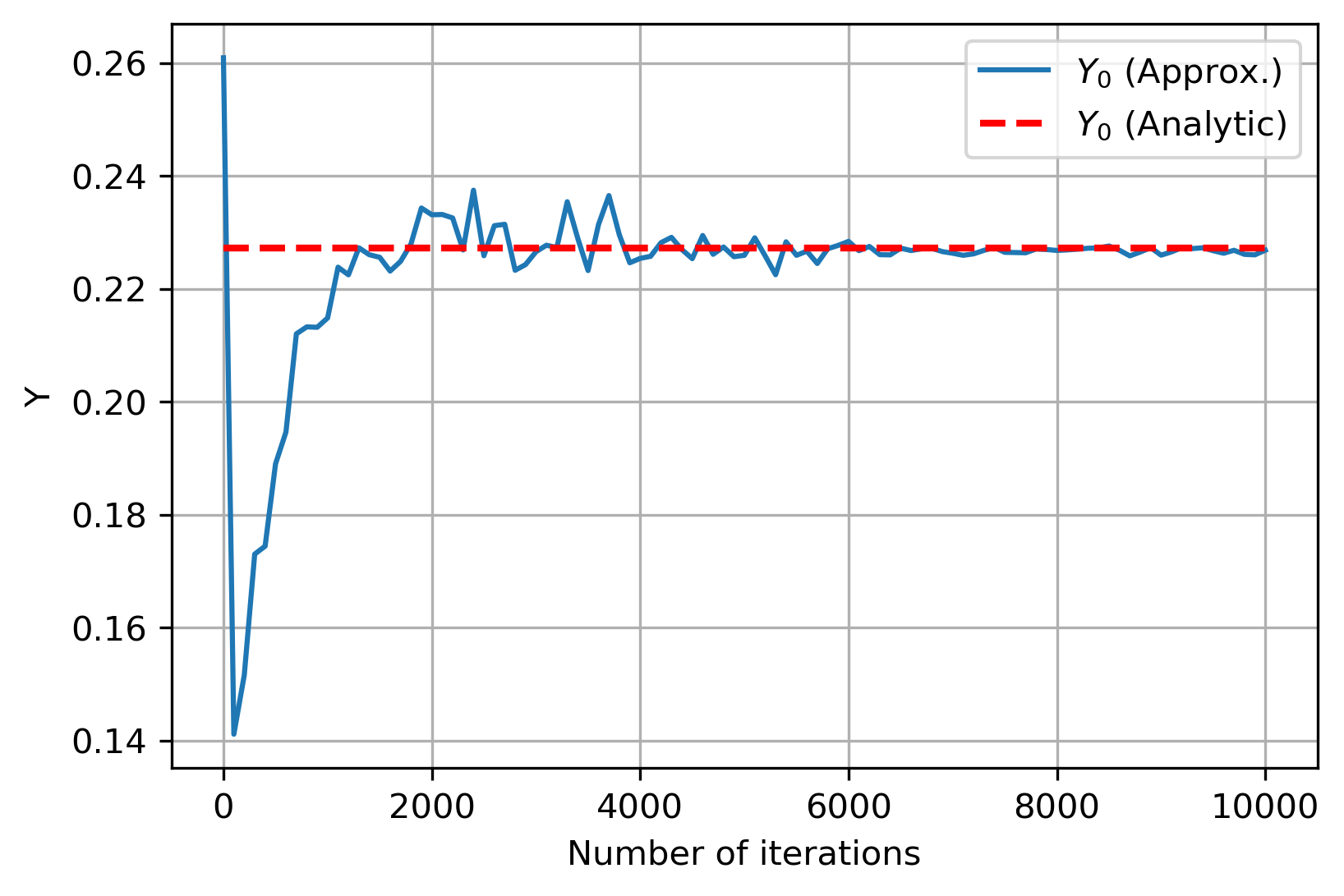}&    
          \includegraphics[width=70mm]{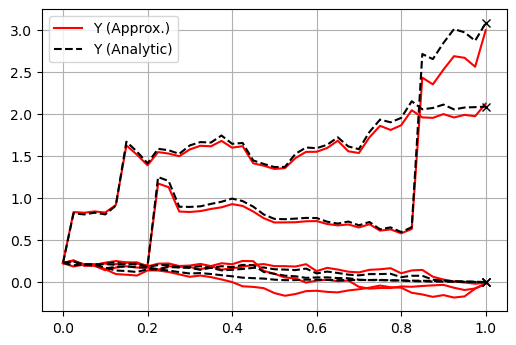}
\end{tabular}
\caption{{\textbf{Left:} Approximate initial value of the BSDE against the number of training steps (the number of batch iterations). \textbf{Right:}  Five representative paths of our approximations compared to the analytical solutions.}}\label{fig:CallRes}
\end{figure}
\begin{remark}
Clearly, $Y_0$ could be accurately approximated via a Monte Carlo method in significantly less time (even in high-dimensional problems) than our algorithms. However, the comparison is not fair since our method approximates the solution of the PIDE along the stochastic paths $(t,X_t)_{t\in[0,T]}$ and not only at $(0,x)$. Pathwise approximate solutions could be obtained via the Monte Carlo method by resorting to nested simulations, which are notoriously expensive.
\end{remark}

\subsection{A Basket call option example}
The examples we presented so far are mainly meant to provide a validation of the methodology in the one dimensional case. The example we proceed to discuss now instead provides evidence that the methodology is viable in a high dimensional setting. We consider the case of several underlying assets $(X^1;\cdots;X^d)$: 
\begin{equation}\label{eq:basketDiff}
    \frac{\de X_t^i}{X_{t-}^i} = r^i\de t +\sigma^i \de W^{\mathbb{Q},i} + \int_{\mathbb{R}}(e^z-1)\tilde N^i(\de z, \de t)\; , \quad  X_0^i = x_0^i \in \mathbb{R}^d , \quad i=1,\cdots , d \, ,
\end{equation}
where $r^i\in\mathbb{R}$, $\sigma^i\in\mathbb{R^+}, i=1,\cdots,d$, $W^{\mathbb{Q}} = (W^{\mathbb{Q},1},\cdots ,W^{\mathbb{Q},d})$ is a standard Brownian motion in $\mathbb{R}^d$  and $\nu^i(\de z)=\lambda \varphi(z)\de z$, $i=1,\cdots,d$, is the Lévy measure with $\lambda>0$ and $\varphi(z)=\frac{1}{\sqrt{2\pi}\sigma_J} e^{-\frac{1}{2}\left(\frac{z-\mu_J}{\sigma_J}\right)^2}$.

The solution to \eqref{eq:basketDiff} satisfies, for any $i=1,\ldots, d$,
\begin{align*}
\begin{aligned}
    X_T^i = X_t^i \exp&\left\lbrace \left(r^i-\frac{\sigma^{i,2}}{2} \right)(T-t)+\sigma^i (W_T^{\mathbb{Q},i}-W_t^{\mathbb{Q},i})\right.\\
    &\left.
   -\int_{\mathbb{R}}(e^z-1-z)\nu^i(\de z) (T-t)+ \int_t^T\int_{\mathbb{R}}z\tilde N^i(\de z, \de s)\right\rbrace \; .
 \end{aligned}
\end{align*}

Let $Y$ be a basket of European call option on $(X^1;\cdots;X^d)$ with value at time $t\in [0,T]$ given by 
\begin{equation}
Y_t = \mathbb{E}^\mathbb{Q}\left[e^{-r(T-t)}\left( \frac{1}{d}\sum_{i=1}^d X_T^i-k\right)^+ \biggr\vert\cF_t \right]\; .
\end{equation}
where $k\in\mathbb{R^+}$ is the strike price. From the Feynman-Kac representation formula we have $Y_t = u(t,X_t^1,\cdots,X_t^d)= u(t,X_t)$ where $u$ solves the following PIDE: 
\begin{equation}\label{eq:pidePureJump2}
\begin{cases}
\!\begin{aligned}
u_t(t,x) + \nabla_x u(t,x)b_x + \frac{1}{2} \tr\left[\sigma\sigma^\intercal(t,x)D^2_xu(t,x) \right]& - ru(t,x) \\
 + \int_{\mathbb{R}^d} u(t,x_{.}e^z)-u(t,x) & -x_{.}(e^z-\mathbf{1})\nabla_x u(t,x) \nu(\de z)  = 0
\end{aligned}\\
u(T,x) = \left( \frac{1}{d}\sum_{i=1}^d x_i -k\right)^+,
\end{cases}
\end{equation}
where we use $_{.}$ to denote the component-wise product between vectors, $e^z$ is the vector where each entry is of the form $e^{z_i}$, $i =1,\ldots d$, $\mathbf{1}$ is the vector with all elements equal to one and $\nu$ is the product of the $d$ L\'evy measures of the individual driving processes.
Therefore, the related BSDE is
\begin{equation}
\begin{cases}
-\de Y_s = -rY_s \de s - Z_s \de W_s^\mathbb{Q} - \int_{\mathbb{R}^d} U(s,z) \tilde N(\de z, \de s)\\
Y_T = \left( \frac{1}{d}\sum_{i=1}^d X_T^i-k\right)^+,
\end{cases}
\end{equation}
where  $U_t(z)=u(t,X_{t^-} {}_{.}e^z)-u(t,X_{t^-})$.

For the basket option, we approximate $Y_0$ using the deep solver. Our approximation is compared with a reference solution computed with a Monte Carlo approximation. {To compute reference solutions of $(Y_t)_{t\in(0,T]}$ we resort to a nested Monte Carlo approximation. We set  $X_0=\mathbf 1$, ${k=0.9}, r=0.05, \sigma^i = \sigma=0.25$ for $i = 1,\ldots,d$, $\lambda=0.3$, $\mu_J=0.5$ and $\sigma_J=0.25$. We consider $M = 40$ time steps and perform $10,000$ batch iterations for $d=5$ and $d=25$, and $20,000$ batch iterations for $d=100$, with  ANNs with  $\mathcal L-1= 2$ hidden layers, each with $\upsilon = 25$, $\upsilon=50$ and $\upsilon=100$ nodes, for $d=5$, $d=25$ and $d=100$, respectively. The batch size is set to $2^{10}=1024$ for $d=5$, $2^8=256$ for $d=25$ and $d=2^6=64$ for $d=100$. The results are displayed in Figure \ref{fig:BasketCallRes}.

\begin{figure}[htp]
\centering
\begin{tabular}{cc}
          \includegraphics[width=70mm]{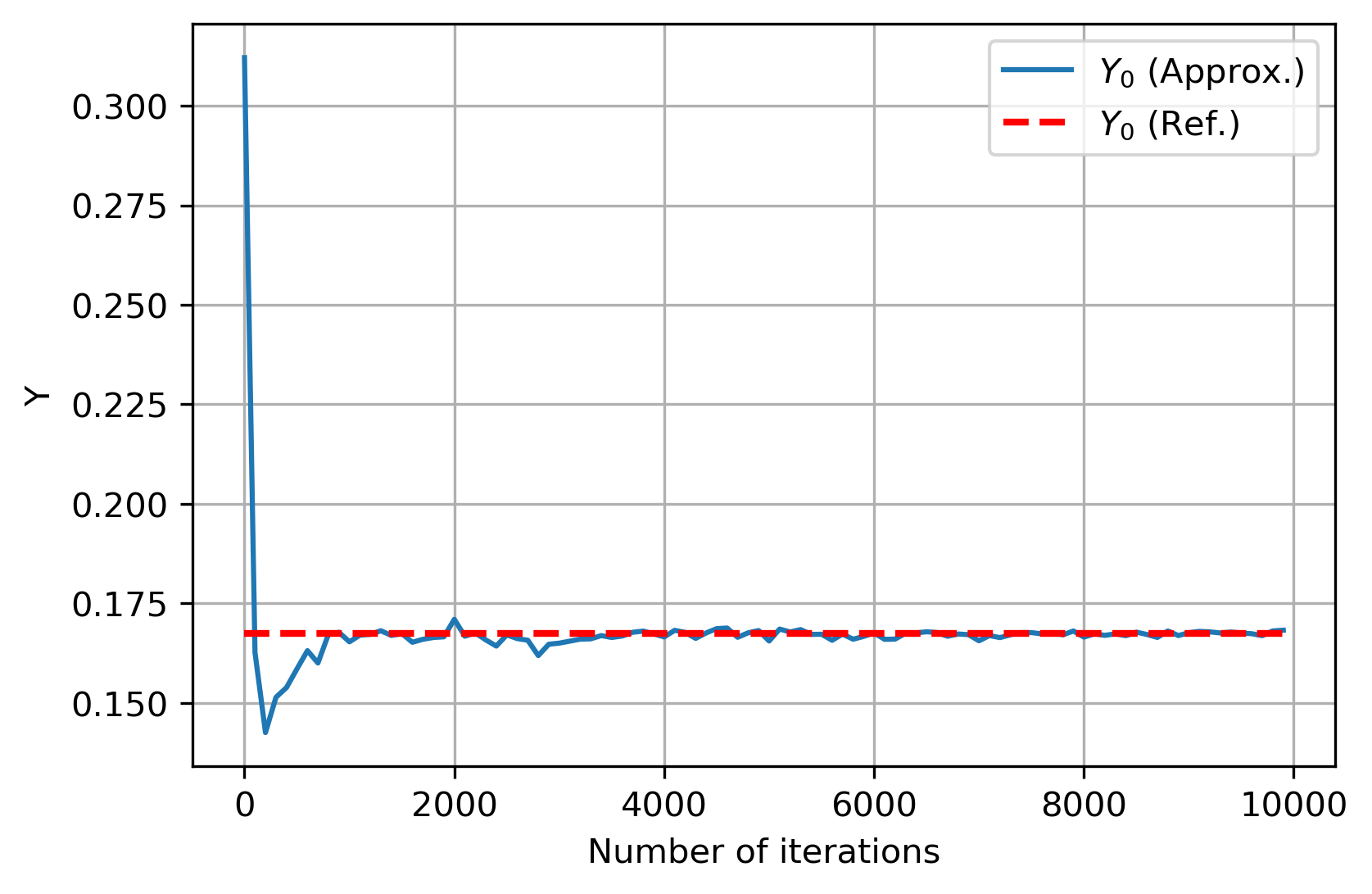}&    
          \includegraphics[width=70mm]{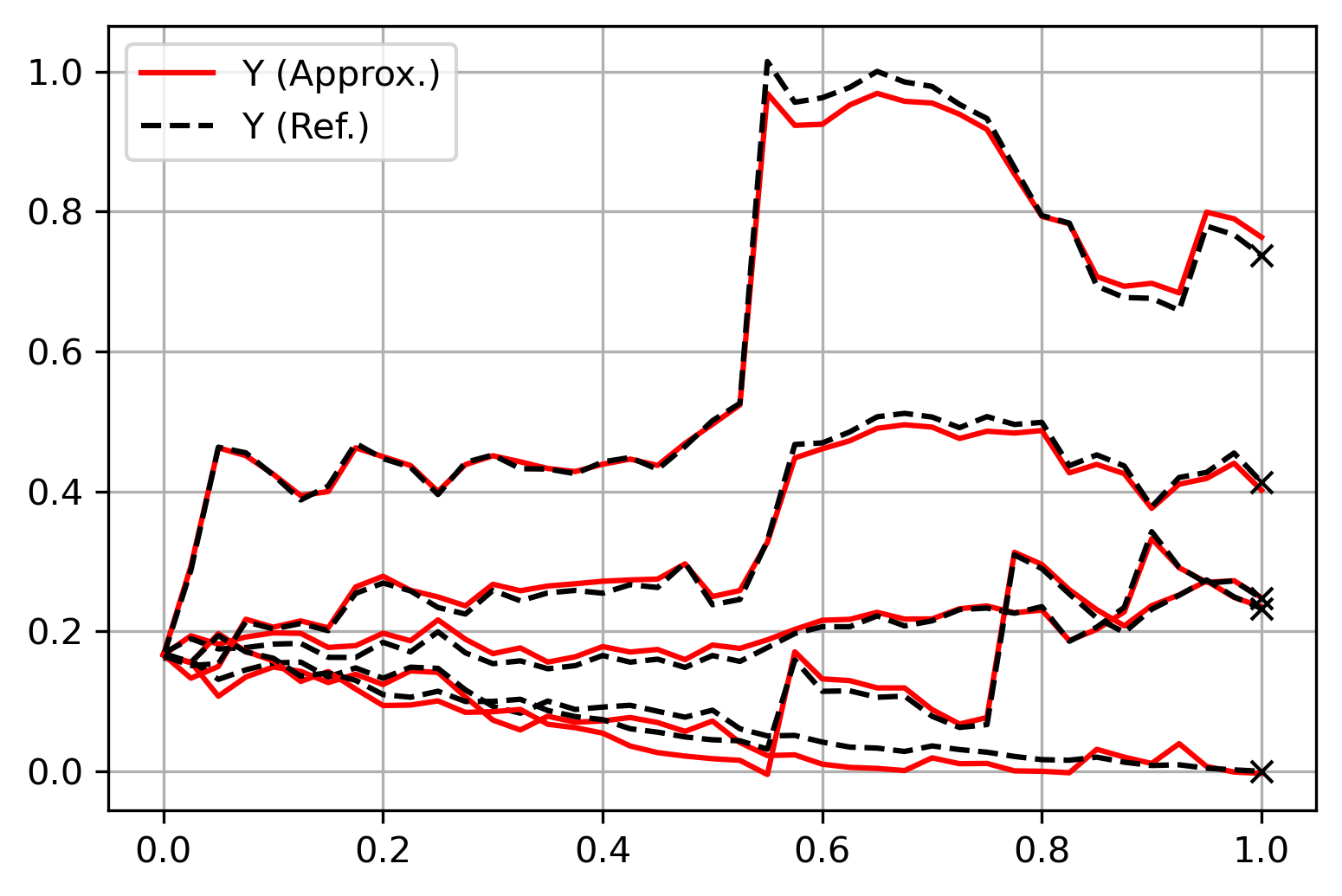}\\
            \includegraphics[width=70mm]{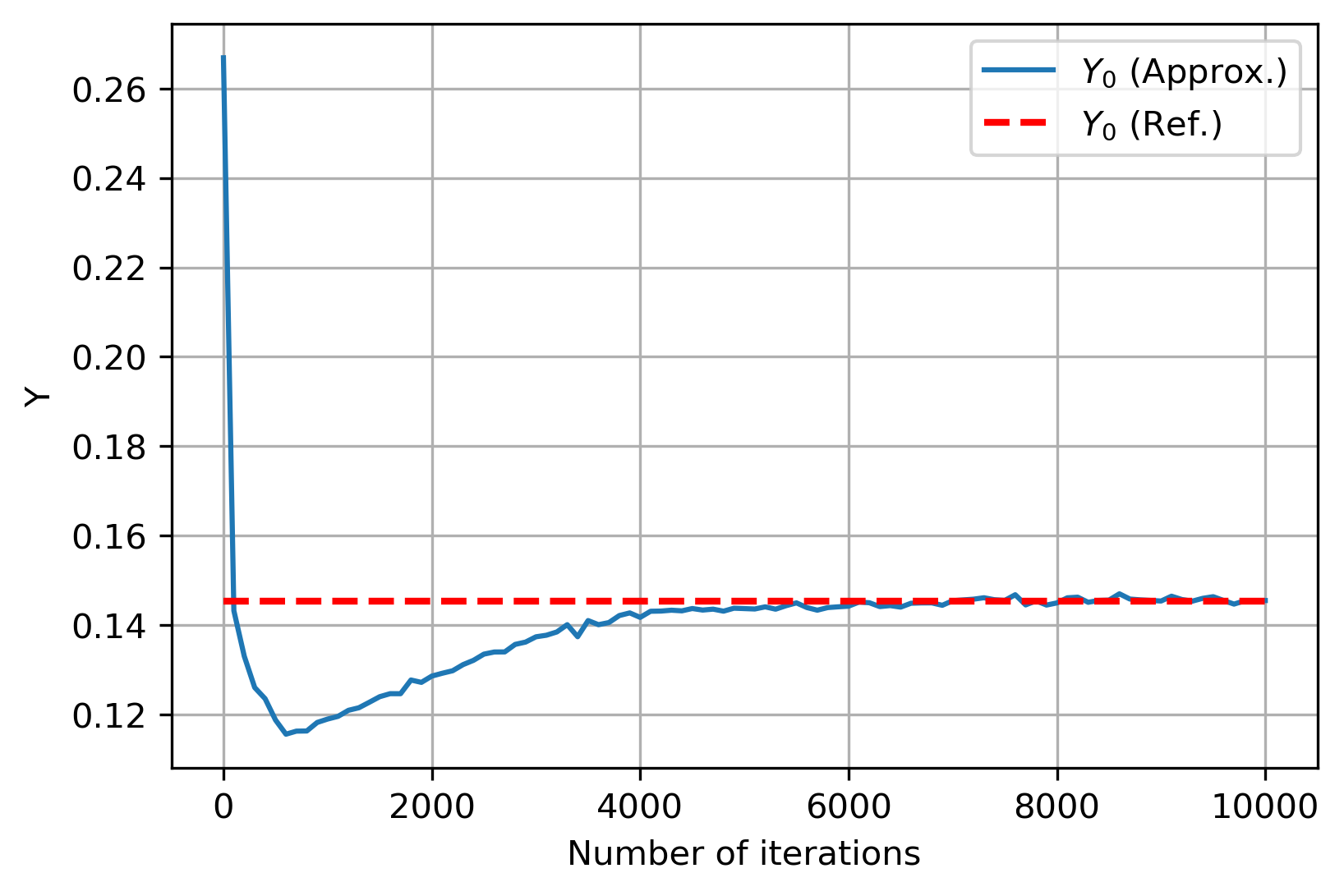}&    
          \includegraphics[width=70mm]{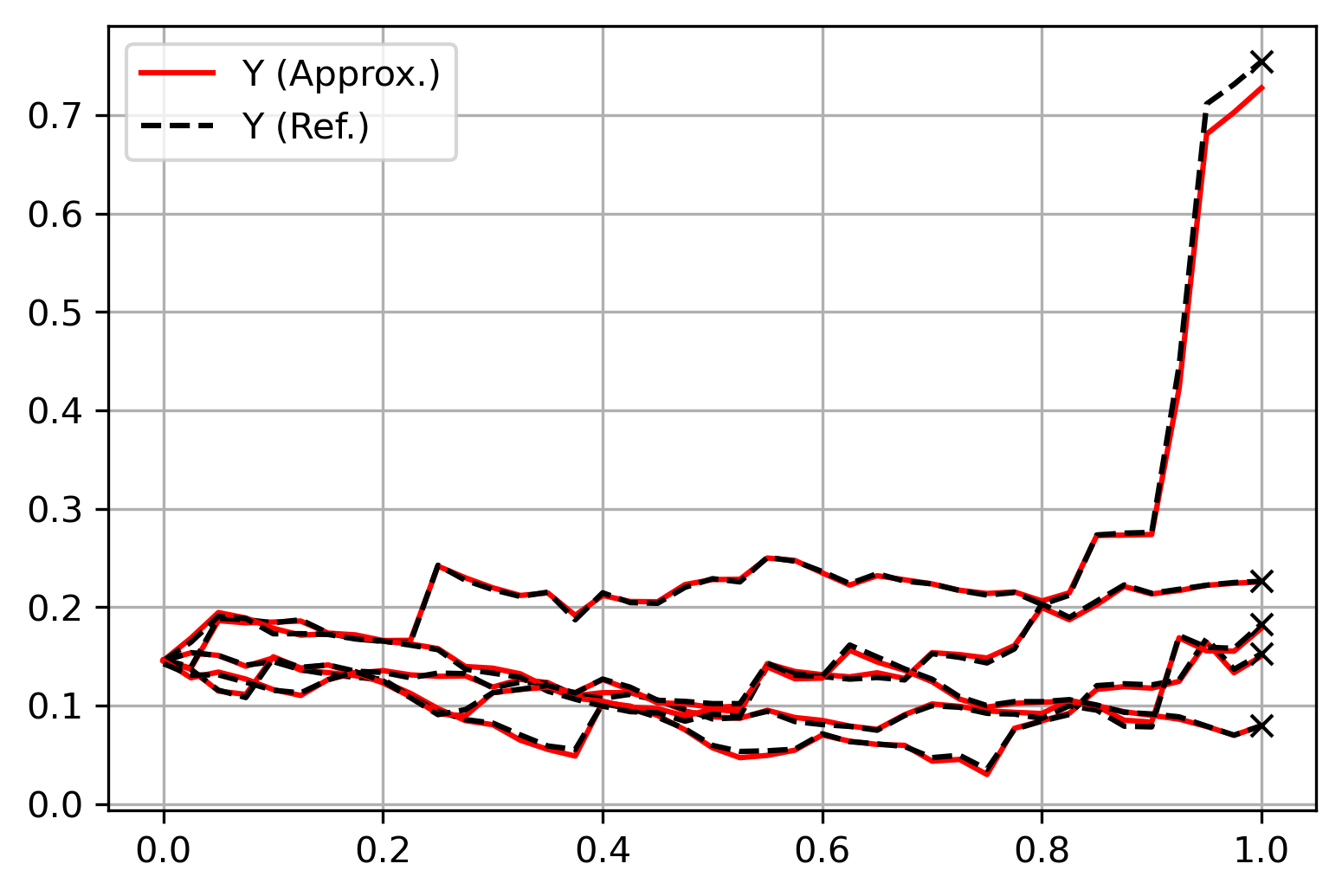}\\
        \includegraphics[width=70mm]{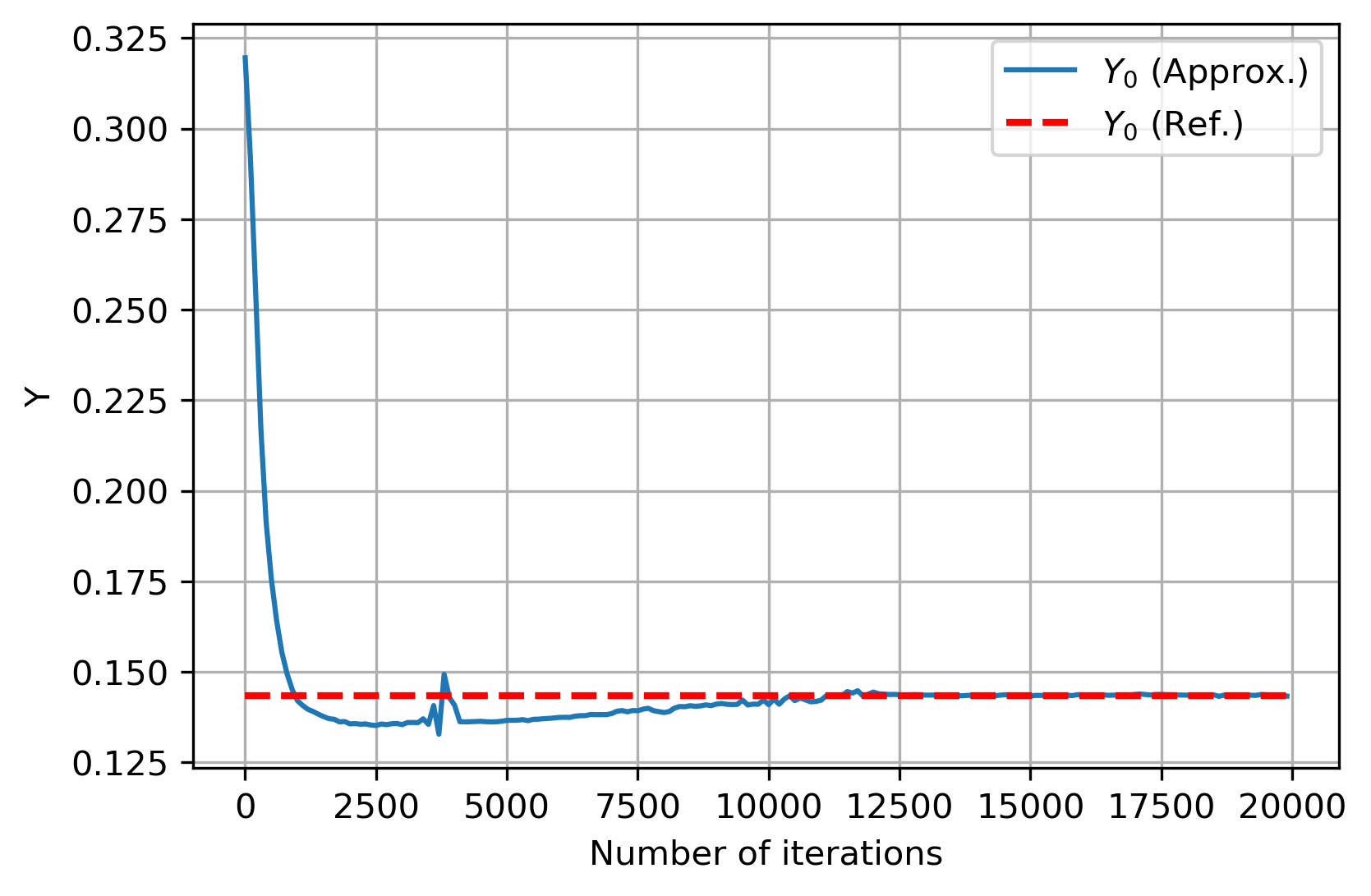}&   \includegraphics[width=70mm]{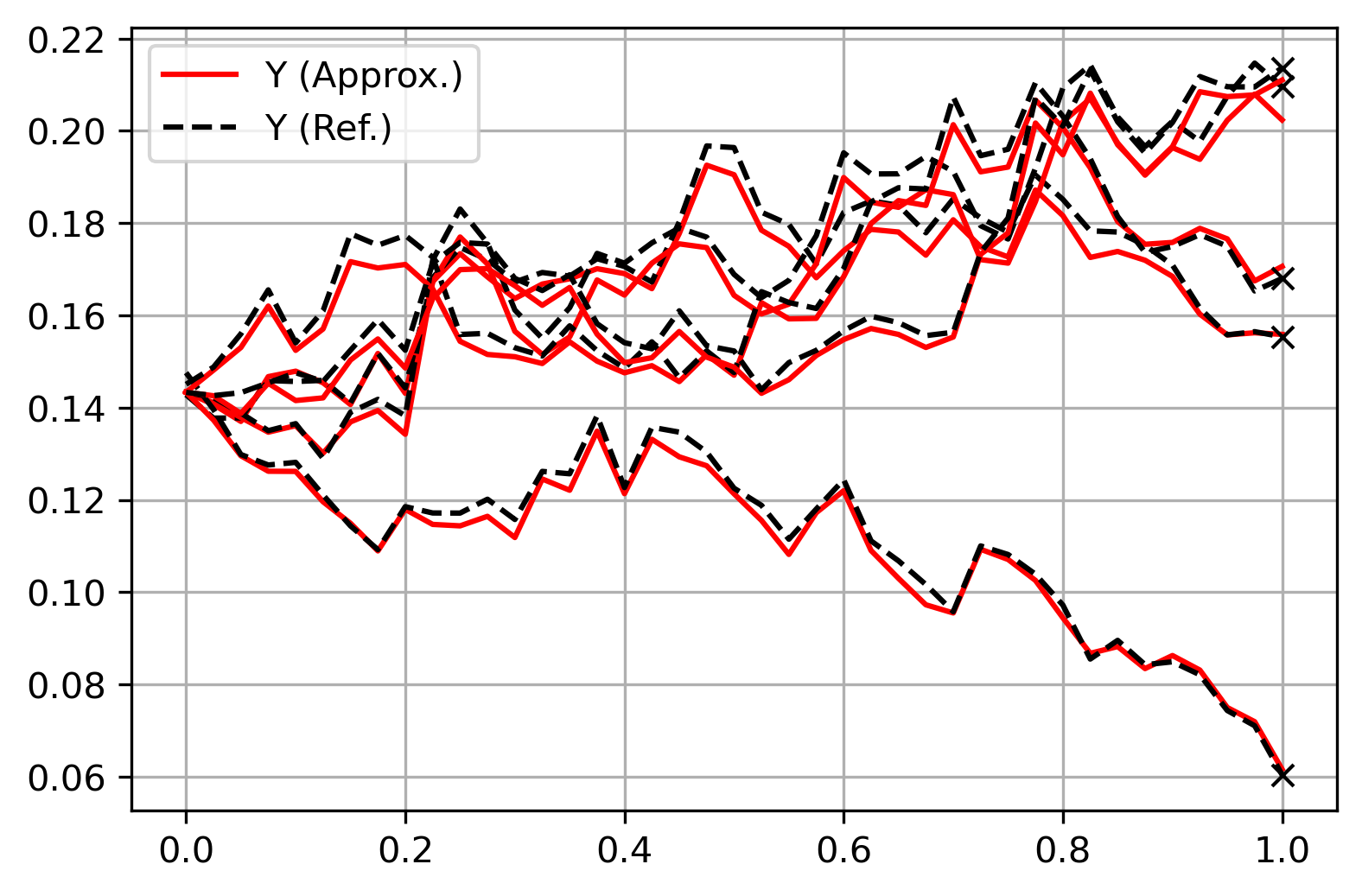}
\end{tabular}
\caption{{\textbf{Left:} Approximate initial value of the BSDE for $d=5$ (upper), $d=25$ (middle) and $d=100$ (lower) against the number of training steps (the number of batch iterations). \textbf{Right:} Five representative paths of our approximations compared to reference solutions obtained from a nested Monte Carlo approximation for $d=5$ (upper), $d=25$ (middle) and $d=100$ (lower). }}\label{fig:BasketCallRes}
\end{figure}

Finally, we vary the intensity jump parameter $\lambda$, and rerun the five dimensional experiment for $\lambda\in\{0.1,0.3,0.5,0.7,0.9,1.1,1.3\}$. In Figure \ref{fig:densityComp} we display the effect of the choice of different intensity parameters on the tail distribution of log-prices distributions. 
\begin{figure}[htbp]
\centering
\includegraphics[scale=0.5]{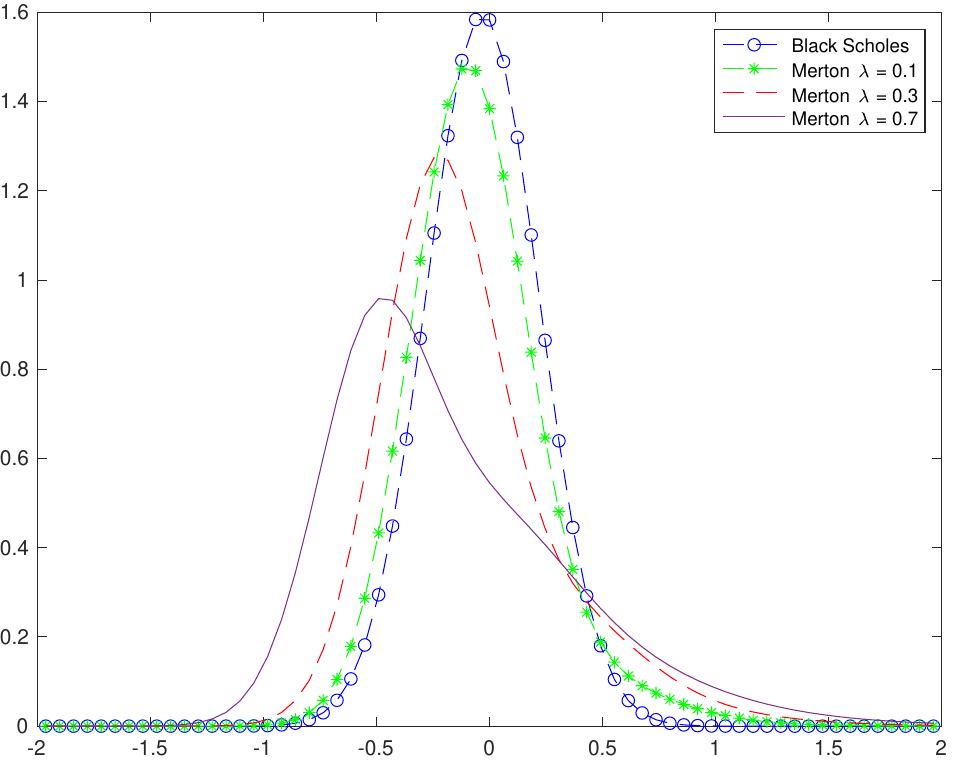}
\caption{{Comparison of log-price densities of a single asset for different values of $\lambda$.}}
\label{fig:densityComp}
\end{figure}
The intuition behind the experiment is the following: if the jump intensity/activity is high, a finer time discretization may be needed in order to accurately approximate the trajectories of the FBSDE. From the presentation of our proposed algorithm, it is clear that it allows for multiple jumps between adjacent time points. However, it remains to be investigated how this affects the accuracy of the algorithm in practice. In Table \ref{tab:SimResLambda_high} we compare our approximate initial values of the BSDE with reference solutions computed with Monte Carlo sampling. Moreover, We include the probability of having more than one jump between adjacent time points which is given by $1 - \big(\text{e}^{-\lambda  \Delta t} (1+\lambda\Delta t)\big)^{dM}$ for each realization of the asset process $\widetilde{X}$. As we can see in Table \ref{tab:SimResLambda_high}, we can obtain high accuracy of the approximate initial value even with jump intensities which implies that we almost have a $10\%$ probability of having at least two jumps between adjacent time points somewhere in each sample.} We remark however that the values of $\lambda$ we are employing in the present experiment are much higher in comparison to those obtained from a calibration to prices of financial products, see among others \citet{kienitz2013financial}.

\begin{table}[htbp]	
\caption{{Simulation results for different values of $\lambda$ for the basked option with $d=5$.}}
\centering
%\relsize{-0.7}
\begin{tabular}{ccccc}
\toprule
$\lambda$ & $\mathbbm{P}(\text{\#jumps}>1)$&	$Y_0$ (MC-approx.)  & $\widetilde{Y}_0^\Theta$	& Relative error  \\
\midrule\midrule
0.1	& 0.062\% &	0.1526	&	0.1528	&	0.15\%			\\
0.3	& 0.56\% &	0.1678	&	0.1672	&	0.40\%			\\
0.5	& 1.5\% &	0.1836	&	0.1826	&	0.59\%			\\
0.7	& 3.0\% &	0.1974	&	0.1988	&	0.60\%			\\
0.9	& 4.9\% &	0.212	&	0.2138	&	0.83\%			\\
1.1	& 7.2\% &	0.2246	&	0.2252	&	0.27\%			\\
1.3	& 9.8\% &	0.2354	&	0.2320	&	1.44\%			\\
\bottomrule
\end{tabular}
\label{tab:SimResLambda_high}
\end{table}

% \begin{table}[htbp]	
% \caption{Simulation results with finer/less fine time discretization grid for high values of $\lambda$.}
% \centering
% %\relsize{-0.7}
% \begin{tabular}{cccc|cc}
% \toprule
% $\lambda$ &	MC Price  & BSDE Price 80 time step	& Err.\%   & BSDE Price 20 time step	& Err.\%\\
% \midrule\midrule
% 0.1	& 13.515    &  13.577 &  0.45\%  		& 13.520 & 0.04\%   \\
% 0.3	& 13.607	&  13.537 &  0.52\%  		& 13.581 & 0.19\%   \\
% 0.5	& 13.772	& 13.686  &  0.63\% 		& 13.671 & 0.74\%   \\
% 0.7	&   14.019	&   13.977  &   0.30\% 		& 13.986 & 0.24\%   \\
% 0.9	&	14.524	&	14.272	&	1.77\%		& 14.176 & 2.46\%	\\
% 1.1	&	14.944	&	14.461	&	3.34\%		& 14.366 & 5.41\%	\\
% 1.3	&	16.111	&	14.744	&	9.27\%		& 14.602 & 10.33\%	\\
% \bottomrule
% \end{tabular}
% % 1.1	&	14.944	&	14.461	&	3.34\%		& 14.328 & 4.30\%	\\ 
% %\caption*{Dim = 100 ; Time step = 80; Iterations=40,000; layer=2; batch=64}
% \label{tab:SimResLambda_high}
% \end{table} 

%
%\begin{figure}[htbp]
%\centering
%\includegraphics[scale=0.35]{}
%\caption{Dim = 50}
%\label{fig:ExpRes}
%\end{figure}

\subsection{Infinite activity example}

The final example we propose provides evidence for the feasibility of the proposed algorithm for the infinite activity case. We consider the CGMY process, which is a pure jump L\'evy process $L$ with characteristic triplet $(0,0,\nu(\de z))$, where the L\'evy measure is given by
\begin{align*}
\nu(\de z)=C\frac{e^{-G|z|}}{|z|^{1+Y}}\mathtt{1}_{\{z<0\}}\de z+C\frac{e^{-Mz}}{z^{1+Y}}\mathtt{1}_{\{z>0\}}\de z.
\end{align*}
The L\'evy measure corresponds to that of a difference of two tempered stable subordinators, meaning that we can write $L_t=L^+_t-L^-_t$ where 
\begin{itemize}
\item $L^+$ has triplet $(0,0,\nu^+(\de z))$, where $\nu^+(\de z):=C\frac{e^{-Mz}}{z^{1+Y}}\mathtt{1}_{\{z>0\}}\de z$;
\item $L^-$ has triplet $(0,0,\nu^-(\de z))$, where $\nu^-(\de z):=C\frac{e^{-Gz}}{z^{1+Y}}\mathtt{1}_{\{z>0\}}\de z$.
\end{itemize}
In the following we outline the procedure to approximate a tempered stable subordinator by following \cite{Cont2003} and references therein among others. Since the procedure is the same for $L^+$ and $L^-$, we choose to concentrate on $L^+$. We fix $\epsilon>0$, which represents a threshold for jump sizes where all jumps smaller than $\epsilon$ are approximated by a diffusion process, whereas all jumps larger than $\epsilon$ is approximated by a suitably constructed compensated compound Poisson process. We first look at the large jumps. The jump compensator of the approximating compound Poisson process is given by
\begin{align*}
\int_\epsilon^\infty z C\frac{e^{-Mz}}{z^{1+Y}}\de z&=C\int_\epsilon^\infty z^{-Y} e^{-Mz}\de z=C\int_{M\epsilon}^\infty \left(\frac{\ell}{M}\right)^{-Y} e^{-\ell}\de \ell\\
&=\frac{C}{M^{a}}\int_{M\epsilon}^\infty \ell^{a-1}e^{-\ell} \de \ell \; ,
\end{align*}
where we set $a:=1-Y$. Let us now recall the lower incomplete Gamma function as implemented in  \texttt{scipy} or \texttt{Matlab} $\Gamma(a,M\epsilon):=\frac{1}{\Gamma(a)}\int_0^{M\epsilon}\ell^{a-1}e^{-\ell}\de \ell$, where $\Gamma(a)$ is the Gamma function computed in $a$. We conclude that 
\begin{align*}
\int_\epsilon^\infty z C\frac{e^{-Mz}}{z^{1+Y}}\de z&={C}\Gamma(a)\left(1-\Gamma(a,M\epsilon)\right)\eqcolon-b_{\epsilon^+} \; .
\end{align*}
The intensity of the approximating compound Poisson process is given by
\begin{align*}
\lambda^+_\epsilon:=\int_\epsilon^\infty C\frac{e^{-Mz}}{z^{1+Y}}\de z \; .
\end{align*}
To find a convenient expression for the intensity, we integrate by parts and obtain
\begin{align*}
\int_\epsilon^\infty zC\frac{e^{-Mz}}{z^{1+Y}}\de z=\frac{C}{M}e^{-M\epsilon}\epsilon^{-Y}-\frac{YC}{M}\int_\epsilon^\infty e^{-Mz}z^{-1-Y}\de z \; .
\end{align*}
By rearranging terms, we can write
\begin{align*}
\lambda^+_\epsilon=Ce^{-M\epsilon}\frac{\epsilon^{-Y}}{Y}+\frac{M}{Y}b_{\epsilon^+} \; .
\end{align*}
Next, we need to determine the jump size distribution. We denote by $f_{\epsilon^+}$ such density. Recalling the general form of the L\'evy measure of a compound Poisson process we can write
\begin{align*}
f_{\epsilon^+}(z)=\frac{1}{\lambda^+_\epsilon}C\frac{e^{-Mz}}{z^{1+Y}}\mathtt{1}_{\{z>\epsilon\}} \; .
\end{align*}
It is possible to sample from such distribution by means of the acceptance rejection method as outlined in \cite{Cont2003}. Finally, we also introduce a diffusion approximation of small jumps. {The variance for the small jumps is given by \begin{equation*}
    \int_{-\epsilon}^\epsilon z^2 \nu(\de z) - \Big(\int_{-\epsilon}^\epsilon z \nu(\de z)\Big)^2\approx \int_{-\epsilon}^\epsilon z^2 \nu(\de z),
\end{equation*}
since the second term vanishes for small $\epsilon$, and hence, the diffusion coefficient is set to}
\begin{align*}
\sigma^2_\epsilon \coloneqq\int_{-\epsilon}^\epsilon z^2 \nu(\de z) \; .
\end{align*}
Again, due to the particular structure of the L\'evy measure, we can split the computation between the positive and negative jumps. For the positive jumps we have
\begin{align*}
\int_0^\epsilon z^2 C\frac{e^{-Mz}}{z^{1+Y}}\de z=\frac{C}{M^{2-Y}}\Gamma(a+1)\Gamma(a+1,M\epsilon) \; ,
\end{align*}
and similarly for the negative jumps. In summary, to simulate the CGMY process $L$, we introduce the discrete time approximation $L^\epsilon$ given by
\begin{align*}
L^\epsilon_{t_{n+1}}=L^\epsilon_{t_n}+\sigma_\epsilon \Delta W_{n+1}+\sum_{j=1}^{N_{n+1}^{+}} \Delta L_{n+1, j}^{+}-b_{\epsilon^+}{\Delta t}-\sum_{j=1}^{N_{n+1}^{-}} \Delta L_{n+1, j}^{-}+b_{\epsilon^-}{\Delta t} \; ,
\end{align*}
where
$N_{n+1}^{+}\sim \mathcal{P}\left(\lambda_{\epsilon}^{+}, \Delta t\color{black}\right)$, $N_{n+1}^{-}\sim \mathcal{P}\left(\lambda_{\epsilon}^{-},{\Delta t}\right)$, $\Delta L_{n+1, j}^{+},\sim f_{\epsilon^+}$, and 
$\Delta L_{n+1, j}^{-}\sim f_{\epsilon^-}$.
The approximating asset price process is then given by
\begin{equation}\label{eq:approxprice_true}
    X^\epsilon_{t_{n+1}} = X^\epsilon_{t_n} \exp\left\lbrace \Big({r-\frac{\sigma_\epsilon^2}{2}} -\int_{|z|\geq\epsilon}(e^z-1-z)\nu(\de z) \Big){\Delta t}+L^\epsilon_{t_{n+1}}-L^\epsilon_{t_{n}} \right\rbrace.\\
\end{equation}
In the above $\frac{\sigma_\epsilon^2}{2}$ and $\int_{|z|\geq\epsilon}(e^z-1-z)\nu(\de z)$ are the exponential compensators for the small jumps (modeled by a Brownian motion) and the large jumps (modeled by the Poisson random measure), respectively. On the other hand, for small $\epsilon$, $\frac{\sigma_\epsilon^2}{2} + \int_{|z|\geq\epsilon}(e^z-1-z)\nu(\de z)$ can be approximated by the full convexity correction for the CGMY model $\int_{\mathbb{R}}(e^z-1-z)\nu(\de z)$, for which we have the following expression \color{black}\begin{align*}
-\int_{\mathbb{R}}(e^z-1-z)\nu(\de z)=-C\Gamma(-Y)\left((M-1)^{Y}-M^Y+(G+1)^Y-G^Y\right).
\end{align*}
Therefore, we set
\color{black}
\begin{equation}\label{eq:approxprice}
    X^\epsilon_{t_{n+1}} = X^\epsilon_{t_n} \exp\left\lbrace \Big({r} -\int_{\mathbbm{R}}(e^z-1-z)\nu(\de z) \Big){\Delta t}+L^\epsilon_{t_{n+1}}-L^\epsilon_{t_{n}} \right\rbrace.\\
\end{equation}
We note that while equation \eqref{eq:approxprice_true} is a martingale, equation \eqref{eq:approxprice} is generally not. However, for the purposes of numerical implementation, this discrepancy has no significant impact as long as $\epsilon$ is small enough. A brief sensitivity analysis of $\epsilon$ is conducted in this section.
\color{black}
\subsubsection{Call option under the CGMY model}
Let $X$ be the price of a stock described by \eqref{eq:approxprice}
and $Y$ a European call option with value
\begin{equation}
Y_t = \mathbb{E}^\mathbb{Q}\left[e^{-r(T-t)}(X_T-k)^+ |\cF_t \right].
\end{equation}

Our goal is to estimate the value $Y_0$ using the deep solver and compare it with a standard Monte Carlo simulations. 
Before we proceed with the estimation, we test the goodness of fit of the proposed approximation by comparing it with the semi closed-form density obtained via the Fast Fourier Transform (FFT) applied to the characteristic function of the CGMY process. In Figure \ref{fig:density_plot} we are able to verify the goodness of fit.
\begin{figure}[htbp]
\centering
\includegraphics[scale=0.7]{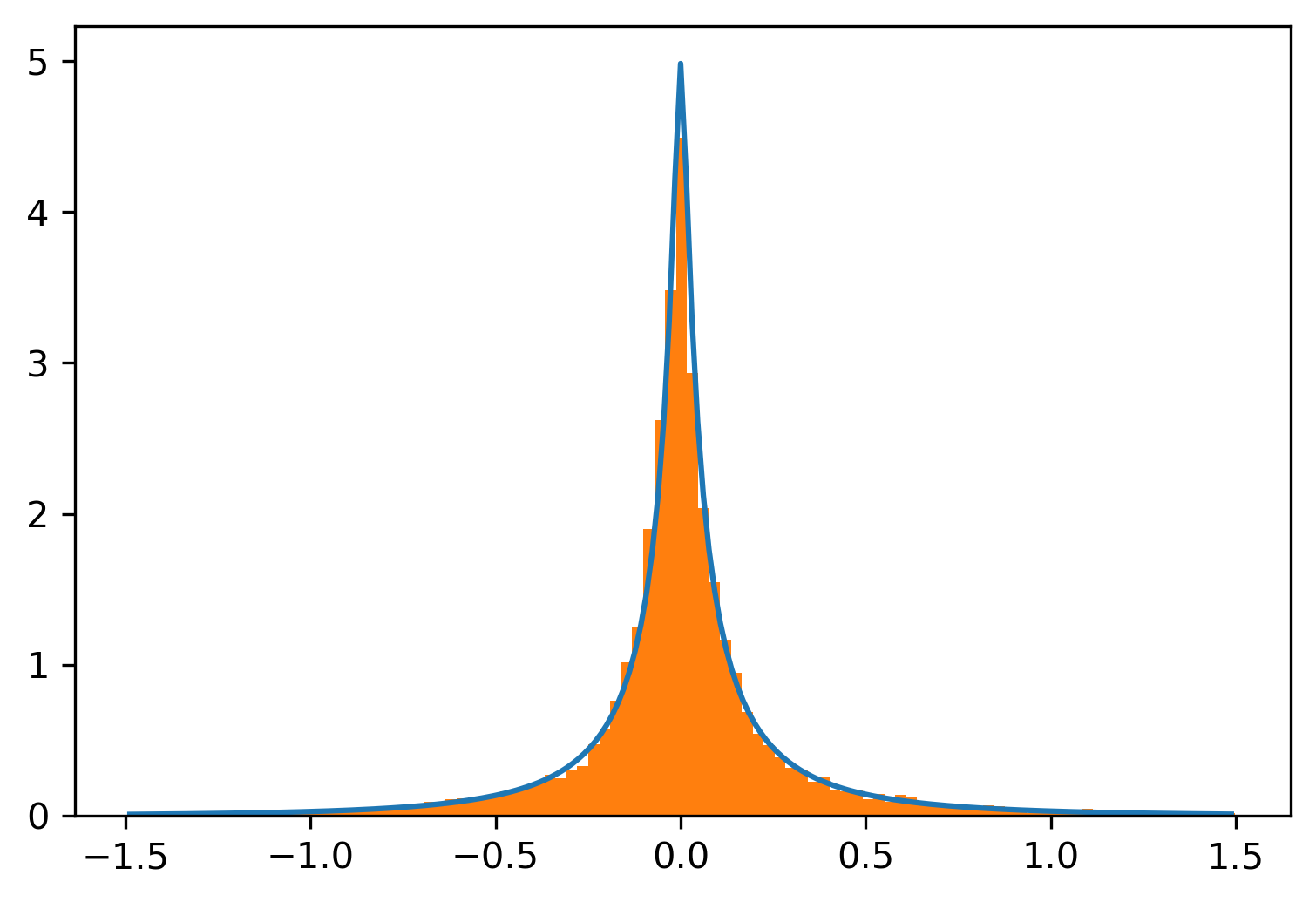}
\caption{Histogram of empirical density in orange and true density in blue.}
\label{fig:density_plot}
\end{figure}

We set $X_0=1$, $k=0.9$, $r=0.04$, $C=0.1$, $G=13$, $M=14$, $Y=0.6$, $\epsilon=0.00001$. We consider $M_\text{steps}=100$ time steps  (to allow $M$ to represent the parameter in the CGMY process, as is customary, we introduce a new notation for the number of time steps in this section), $10,000$ batch iterations with batch size $2^{8}=256$. We use the fast Fourier transform (FFT) to compute reference solutions along each path to obtain pathwise option values. In Figure \ref{fig:CallRes2}, to the right, five representative samples obtained from the deep solver are compared to the reference solutions. \color{black}

\begin{figure}[htp]
\centering
\begin{tabular}{cc}
        \includegraphics[width=70mm]{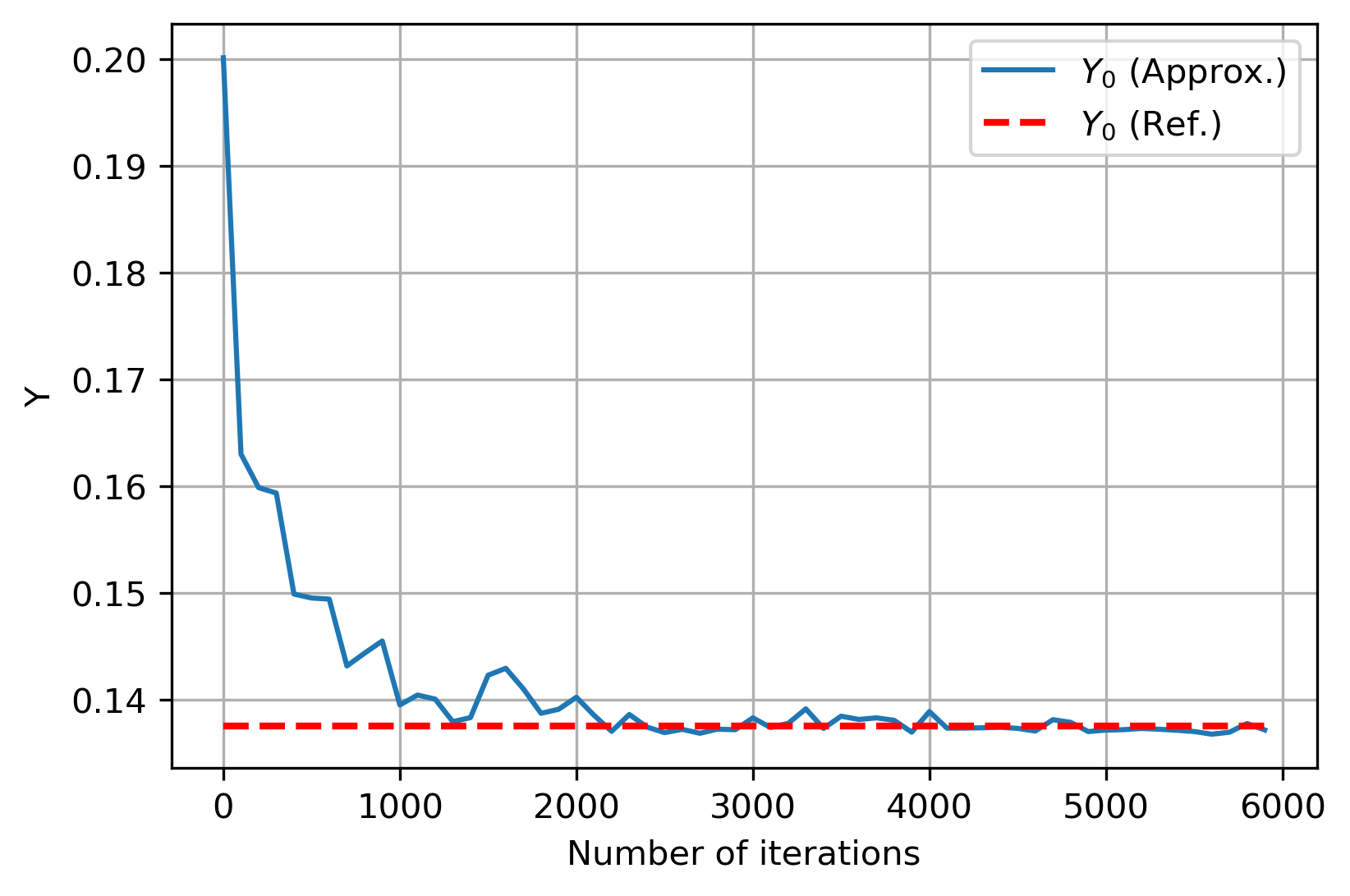}& 
          \includegraphics[width=70mm]{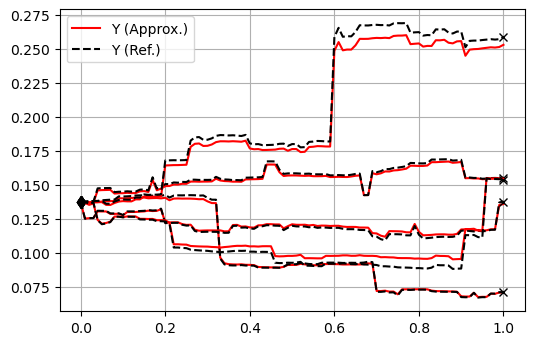}
\end{tabular}
\caption{{\textbf{Left:} Approximate initial value of the BSDE against the number of training steps (the number of batch iterations). \textbf{Right:} Five representative paths of our approximations compared to reference solutions, computed with the semi-analytic solution obtained by the FFT along each path.}}\label{fig:CallRes2}
\end{figure}
{Finally, we analyze the performance of the algorithm for various values of $\epsilon$. The results, shown in Table \ref{tab:epsilon}, indicate significant approximation  errors for $\epsilon>0.0001$. Additionally, we observe that computational time increases as $\epsilon$ decreases, due to the need for more jumps, which are computationally more demanding to simulate than the approximating Brownian motion. It should be noted, however, that we have not optimized the performance of this aspect of the algorithm.} 

\begin{table}[htbp]	
\caption{{Results for different values of $\epsilon$. Reference value for $Y_0$ computed with the FFT (with $\epsilon=0$) is given by $0.1375$.}}
\centering
%\relsize{-0.7}
\begin{tabular}{cccc}
\toprule
$\epsilon$ &	$\widetilde{Y}_0^\Theta\ (Y_0=1375)$	& Relative error & Time (s)  \\
\midrule\midrule
0.005	&	0.295	&	115\%	&	1698		\\
0.001	&	0.149	&	8\%	&	1778			\\
0.0005	&	0.155	&	13\%	&	1932			\\
0.0001	&	0.134	&	2.4\%	&	2329			\\
0.00005	&	0.138	&	0.7\%	&	3526			\\
0.00001	 &	0.137	&	0.4\%	&	9332			\\
\bottomrule
\end{tabular}
\label{tab:epsilon}
\end{table}

\subsection{Application to counterparty credit risk}
{In the examples above, we considered the pricing of some financial payoffs. It would be natural to argue that, in pricing applications, a simple use of a Monte Carlo method allows us to approximate the price at the initial time. This is indeed the way we use to benchmark our results when closed formulas are not available. However, the knowledge of the initial price of a contract is in most cases insufficient in view of managing the risks affecting the claim: typically one needs the possibility to simulate future scenarios of the price process. The Deep BSDE solver with jumps we propose allows us to perform this task: once the training of the family of ANNs has been performed, we can simulate the BSDE's trajectories, thus producing future simulated scenarios for the price process. 
To stress the importance of this feature of the algorithm, let us now focus on the issue of counterparty credit risk. In the aftermath of the financial crisis, the market practice for the valuation of derivative contracts has changed significantly. It is now standard to adjust the standard risk neutral price in order to account for a series of frictions. For the sake of brevity, we concentrate here on the possibility that the bank's conterparty defaults and refer to \cite{bgo2019} for a complete treatment. \\On a filtered probability space as before,  let $Y = (Y_t)_{t\in [0,T]}$ denote the value of a contingent claim with a $\mathcal F_T$-measurable payoff $H$ depending on some jump-diffusion process of the form \eqref{eq:forward}. Let $r=(r_t)_{t\in [0,T]}$ be the discount rate, then the standard risk-neutral valuation approach leads to the following expression for the price of the claim in the absence of counterparty risk
$$
Y_t = \mathbb E^{\mathbb Q} \left[e^{-\int^T_t r_s \de s} H \big| \mathcal F_t  \right]
$$ 
which we refer to as the clean value. The counterparty, however, could default at some stopping time $\tau_C$ with hazard rate $\lambda^C = (\lambda^C_t)_{t\in [0,T]}$. To account for this potential loss the Credit Value Adjustment (CVA) is introduced. The value of the contract is reduced by the expected loss represented by the CVA, so that the adjusted value of the contract $V = (V_t)_{t\in [0,T]}$ is given by 
$$
V_t = Y_t - \text{CVA}_t,
$$ 
where the CVA is given by the following conditional expectation
$$
 \text{CVA}_t = \mathbb E^{\mathbb Q} \left[ \int^T_t \lambda^C_u e^{-\int^u_t (r_s + \lambda^C_s) \de s} \left(Y_u\right)^+  \de u\Big| \mathcal F_t\right].
$$
Inside the conditional expectation one can notice the presence of the  positive part of $Y_u$, $u\in [t,T]$, that represents the potential future liability of the counterparty towards the bank. It is then clear from the CVA formula above that to compute the adjusted price of the claim we need the ability to simulate future scenarios of the clean value $Y$ of the contract. As $Y$ satisfies a BSDE,  our algorithm can be employed to solve such problem. For illustrative purposes we plot in Figure \ref{fig:price_paths} paths of the underlying risk factor using the model from Section \ref{sect:calloption} together with the corresponding sample paths of the call option. 

An in depth analysis of the present algorithm for the computation of the whole set of  valuation adjustments (xVA) can be obtained along the lines of \cite{gpr2020}.

\begin{figure}[htbp]
\centering
\includegraphics[scale=0.55]{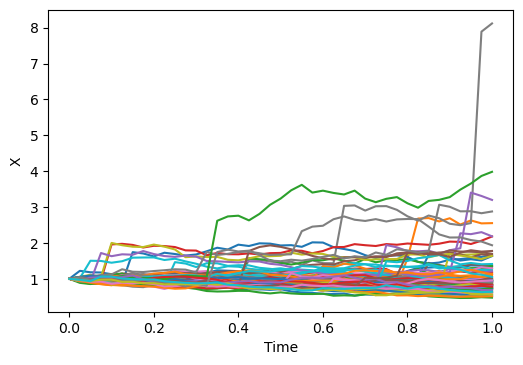} 
\includegraphics[scale=0.55]{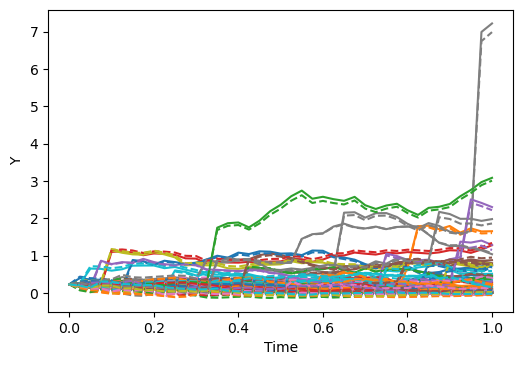}
\caption{{\textbf{Left:} Simulated paths of the forward SDE for the stock price. \textbf{Right: } Simulated paths of the BSDE. Analytical values in solid lines and approximate values in dashed lines. The model under consideration is the Merton Jump diffusion employed in Section \ref{sect:calloption}}}
\label{fig:price_paths}
\end{figure}

}
\section{Conclusions}
In this paper, we presented an extension of the deep BSDE solver of \cite{ehanjen17} and \cite{Han2018} to the setting of FBSDEs with jumps. As in the original formulation of the solver, we discretize the FBSDE and parametrize it by means of ANNs. A delicate part of the proposed extension is given by the treatment of the integral with respect to the compensator inside the jump term, which involves an integration over the whole support of the L\'evy measure. The approximation of such integral terms is increasingly difficult as the dimension of the space increases. With the aim of formulating an algorithm which is feasible in a possibly high-dimensional setting, we parametrize both the integral with respect to the Poisson random measure and the integral with respect to the jump compensator via two further families of ANNs. The parameters of such additional ANNs are then optimized by introducing a second penalty term in the loss that enforces the martingale property of the compensated jump integrals at every point in time.

We test the proposed methodology first in a one-dimensional setting, which allows us to validate the algorithm against known alternative approaches for the computation of the solution. After that, we introduce a high dimensional example, i.e. the PIDE associated to a basket option on $100$ underlyings as a proof for the fact that the algorithm is feasible in high dimension. Finally we provide an example that shows how one can treat the case where the jumps are of infinite activity.

There are several directions for future research. First of all, even though a full theory for the approximation quality of ANNs is still missing, one could still provide a convergence analysis of the algorithm in the spirit of \cite{HanLon18}. A further line of research could look at applications in finance: in \cite{gpr2020} the authors employ the deep BSDE solver to evaluate valuation adjustments in the context of counterparty credit risk in a pure diffusive setting. Since jump processes have not enjoyed a large popularity in the context of counterparty credit risk mainly due to the increased numerical complexity, our approach could allow the use of more complex and realistic asset price models in such context. Another stream of application could be given by quadratic hedging problems, where the deep solver can also be employed as in \cite{glp2020}.
\color{black}

\section*{Acknowledgements} We are grateful to Antonis Papapantoleon, Roxana Dumitrescu and two anonymous referees for useful remarks. 
{Moreover, we would like to express our gratitude to all the participants of the "Quantitative Finance Workshop" held in Gaeta on April 2023, the "SIAM conference on Financial Mathematics and Engineering" held in Philadelphia on July 2023, and the "World Online Seminars on Machine Learning in Finance" held on October 2022, for the insightful discussions and constructive feedbacks we received on this work.}

\bibliographystyle{apalike}
\bibliography{biblio}

\appendix

\end{document}